\definecolor{dkblue}{RGB}{1,31,91} 
\newcommand{\Kradj}{\mathcal{R}^{(j)}_{p}}
\newcommand{\Fj}{F^{(j)}}
\newcommand{\Fon}{F^{(1)}}
\newcommand{\Ftw}{F^{(2)}}
\newcommand{\Knj}{\mathcal{K}_{non.el}^{(j)}}
\newcommand{\Knon}{\mathcal{K}_{non.el}^{(1)}}
\newcommand{\Kntw}{\mathcal{K}_{non.el}^{(2)}}
\newcommand{\eqdef }{\overset{\mbox{\tiny{def}}}{=}}
\newcommand{\rth}{{\mathbb{R}^3}}
\newcommand{\stw}{{\mathbb{S}^2}}
\theoremstyle{definition}
\newtheorem{theorem}{Theorem}
\newtheorem{definition}{Definition}
\newtheorem{proposition}[theorem]{Proposition}
\newtheorem{remark}[theorem]{Remark}
\numberwithin{equation}{section}
\numberwithin{theorem}{section}
\numberwithin{definition}{section}
\begin{document}

\keywords{Boltzmann equation, radiative transfer, nonelastic collisions, hydrodynamic limit, stationary solutions, local thermodynamic equilibrium (LTE), non-LTE}
\subjclass[2010]{Primary 35Q20, 35Q31,	85A25,  76N10   }

\title[LTE and Non-LTE Solutions in Gases Interacting with Radiation]{LTE and Non-LTE Solutions in Gases Interacting with Radiation}

\author[J. W. Jang]{Jin Woo Jang}
\address{Department of Mathematics, Pohang University of Science and Technology (POSTECH), Pohang, South Korea (37673). \href{mailto:jangjw@postech.ac.kr}{jangjw@postech.ac.kr} }

\author[Juan J. L. Vel\'azquez]{Juan J. L. Vel\'azquez}
\address{Institute for Applied Mathematics, University of Bonn, 53115 Bonn, Germany. \href{mailto:velazquez@iam.uni-bonn.de}{velazquez@iam.uni-bonn.de} }

\begin{abstract}In this paper, we study a class of kinetic equations describing radiative transfer in gases which include also the interaction of the molecules of the gas with themselves. We discuss several scaling limits and introduce some Euler-like systems coupled with radiation as an aftermath of specific scaling limits. 
We consider scaling limits in which local thermodynamic equilibrium (LTE) holds, as well as situations in which this assumption fails (non-LTE). The structure of the equations describing the gas-radiation system is very different in the LTE and non-LTE cases.  
We prove the existence of stationary solutions with zero velocities to the resulting limit models in the LTE case. We also prove the non-existence of a stationary state with zero velocities in a non-LTE case.
\end{abstract}

\thispagestyle{empty}

\maketitle
\tableofcontents

\section{Introduction}
The goal of this paper is to formulate and to study several classes of kinetic equations which describe the interactions between radiations (photons) and the gas molecules. The main assumption of this model is that the gas molecules and the photons interact by means of binary collisions in which they exchange their momenta and kinetic energies. 

The main motivation for this study is to make precise the scaling limits in which the distributions of molecules and photons are in the state of non-local thermodynamic equilibrium (non-LTE). It is well-known that in many physical systems in every small macroscopic region the distributions of molecules and photons on it can be assumed to be at a thermodynamic equilibrium. However, in systems where radiative transfer takes place the local thermodynamic equilibrium (LTE) assumption can be lost due to the presence of incoming radiation (for instance at different temperature) or due to the fact that some of the radiation is escaping from the system.

\subsection{Underlying model}
The model that we will consider in this paper is a very simplified model of interactions between molecules and radiation. However, in this model we will be able to describe some of the properties of some non-LTE systems in precise mathematical terms.  In this model, we consider a physical situation that heat can transferred by means of convection and radiation.

The model that we consider is similar to the one considered in \cite{italians}. Specifically, we will assume that the molecules of the gas can be in two different states, the ground state and the excited state, which we will denote as $A$ and $\bar{A}$, respectively. We will assume also that the radiation is monochromatic and it consists of collection of photons with frequency $\nu_0>0.$ Therefore, all the photons of the system have the same energy $\epsilon_0 = h\nu_0$ where $h$ is the Planck constant. A more complicated model with three molecular states will be considered in Section \ref{sec.nonLTE.3level}.

We will assume that the molecules of the system can interact in three different ways:
\begin{enumerate}
    \item Elastic collisions between molecules: \begin{equation}\label{eq.elastic reactions}
	\begin{split}
	A+A &\rightleftarrows A+A,\\
	A+\bar{A} &\rightleftarrows A+\bar{A},\\
	\bar{A}+\bar{A} &\rightleftarrows \bar{A}+\bar{A}.
	\end{split}
\end{equation}
These are collisions between molecules in which the total kinetic energy and the total momentum are conserved. 
\item Nonelastic collisions: 
\begin{equation}\label{eq.nonelastic reactions}
	A+A\rightleftarrows A+\bar{A}.
\end{equation}These collisions are the collisions between two ground-state molecules in which one ground-state molecule is being excited. The reverse reaction can also take place. 
\item Collisions between a molecule and a photon:
\begin{equation}\label{eq.reactions2}
	A+\phi\rightleftarrows \bar{A}.
\end{equation}
Here a molecule in the ground state absorbs a photon via this reaction and becomes a molecule in the excited state. The reverse reaction can also take place.
\end{enumerate}
In \eqref{eq.reactions2} we assume that the momentum of $\phi$ is negligible and therefore the momentum of the excited molecule $\bar{A}$ is the same as that of the ground state $A$. On the other hand, since the energy of a photon is $\epsilon_0$ we have that the energy of the ground state $A$ with the velocity $v$ is $\frac{1}{2}|v|^2$ and the energy of the molecule in the excited state is equal to $\frac{1}{2 }|v|^2+\epsilon_0.$ We assume here and the rest of the paper that the mass $m$ of each molecule is equal to $1$. In principle, we could also include reactions with the form,
$$\bar{A}+A\rightleftarrows \bar{A}+\bar{A},$$
which, however, will be ignored in this paper.

Considering the nonelastic collisions \eqref{eq.nonelastic reactions}, we will assume that the total energy, including the energy $\epsilon_0 $ that is required to form an excited state, is conserved. Therefore, the incoming velocities of pre-collisional molecules, which we denote as $v_1$ and $v_2$, and the outgoing velocities of post-collisional molecules, which we denote as $v_3$ and $v_4$, satisfy

\begin{equation}\label{EnergyInel} \frac{1}{2}|v_1|^2+\frac{1}{2}|v_2|^2=\frac{1}{2}|v_3|^2+\frac{1}{2}|v_4|^2+\epsilon_0.\end{equation}
The total momentum of the molecules is conserved in this type of collision. Notice that the name ``nonelastic" collision here is a bit misleading, since the total energy is conserved. However, the total kinetic energy is not conserved here and therefore we will use this terminology throughout the paper.

An important simplification that we make in this paper is to assume that the radiation is fully monochromatic. This means that we assume that the width of the spectral lines is negligible. In particular, we neglect the increase of the width of the spectral lines due to Doppler effect. The relative increase of the thickness of the spectral lines is proportional to $v/c$ where $v$ is a characteristic velocity of the gas molecules and $c$ is the speed of light (cf. \cite{oxenius}). Therefore, the validity of the models considered in this paper is restricted to situations in which the gas molecules move at non-relativistic speeds.  

We are interested in studying the solutions of the previously discussed in situations in which the gas-radiation system is open to the exchange of radiation with the exterior of the system, but it cannot exchange gas molecules with the exterior of the system. Physical examples of systems that can be approximated in this manner are planetary and stellar atmospheres.

\subsection{LTE and Non-LTE situations}

Denote $\rho_1=\rho_1(t,x)$ and $\rho_2=\rho_2(t,x)$ the number densities of the ground and the excited states, respectively.  We say that the densities are in the Boltzmann ratio if at any point $x$ at any time $t$ they satisfy 
\begin{equation}\label{Boltzmanratio}\frac{\rho_2}{\rho_1}=e^{-\frac{\epsilon_0}{k_B T}},\end{equation} where $T$ is the local temperature. In \eqref{Boltzmanratio} we assume for simplicity that the degeneracies of the ground and excited states (denoted as $q_1$ and $q_2$ respectively) are equal to one. If this assumption is not made there would be an additional factor $\frac{q_2}{q_1}$ on the right hand side of \eqref{Boltzmanratio} (cf. \cite{oxenius}).

 We will say that the gas-radiation system is in Local Thermodynamic Equilibrium (LTE) if the number densities $\rho_1, \rho_2$ satisfy approximately the Boltzmann ratio \eqref{Boltzmanratio} and the distribution of velocities at each point can be approximated by a Maxwellian distribution (cf. \eqref{LocalMaxwellian}), with total molecule density $\rho=\rho_1+\rho_2$ with a local temperature $T=T(t,x)$ and a local velocity $u=u(t,x).$ More precisely, we define the local Maxwellian distribution $M$ for the molecule density $\rho, $ macroscopic fluid velocity $u$ and the local temperature $T$ as \begin{equation}\label{LocalMaxwellian}
     M(v;\rho,u,T)\eqdef \frac{c_0 \rho}{T^{3/2}}\exp\left( -\frac{1}{T}|v-u|^2\right),
 \end{equation}where $c_0=\pi^{-3/2}$. Then for a system where LTE holds, we have the following approximation: $$\Fon (t,x,v)=M(v;\rho_1,u,T),\ \Ftw (t,x,v)=e^{-\frac{2\epsilon_0}{T}} \Fon (v),$$ where $\Fon$ and $\Ftw$ stand for the molecule distributions of the ground and the excited states, and $\rho_1=\rho_1(t,x)$,$u=u(t,x)$, and $T=T(t,x)$.  Here and in the rest of the paper, we will fix the Boltzmann constant to be $k_B=\frac{1}{2}$ in order to get simpler formulas.
 
 Alternatively, we will say that the gas-radiation system is in a non-Local Thermodynamic Equilibrium (non-LTE) if the number densities are not in the Boltzmann ratio or their molecule velocity distributions are not the Maxwellian equilibria; i.e. if at least one of the identities (\ref{Boltzmanratio}) or (\ref{LocalMaxwellian}) fail, then we say that the system is in a non-LTE. However, in realistic physical situations, the failure of the local Maxwellian approximation is rare. Therefore, in this paper, we will restrict only to situations, in which the distributions of velocities are the Maxwellians, for each of the molecules $A$ and $\bar{A}$, but perhaps with different temperature $T_1$, $T_2$ for each of the species, and $\rho_1,\rho_2$ not satisfying \eqref{Boltzmanratio}. In this case, each species can have different temperatures; we denote the temperatures for the ground- and the excited-state molecules as $T_1$ and $T_2$, respectively. Under the assumption that the elastic collisions between the ground-state and the excited-state molecules are rare, we will show in Section \ref{sec.nonLTEnonexist} that there is in general no stationary solution to the system with zero macroscopic velocities where all the heat transfer takes place by means of radiation. Also, we will provide in Section \ref{sec.nonLTE.3level} a generalized model of three-level molecules that yields non-LTE stationary states.

In systems open to radiation, as the ones considered in this paper, we cannot expect to have LTE in general, due to the exchange of radiation with the exterior and the system is in general out of equilibrium.  Notice that in the definition of LTE we do not assume anything about the radiation itself, but only on the molecules of the gas. However, the interaction between the gas and radiation which is not in a thermodynamic equilibrium can result in the gas molecules not being in LTE.
The main driving force that tends to bring the system towards LTE are the ``nonelastic" collisions (cf. \eqref{eq.nonelastic reactions}). These compete with the radiation term which in some situations tends to create non-LTE distributions. Depending on the relative size of these terms we can have LTE or not. We discuss in this paper specific scaling limits yielding LTE and non-LTE distributions of molecules.

\subsection{A summary of mathematical results for the radiative transfer equations}

Some of the earliest models describing the interaction between matter and radiation were the ones introduced in \cite{Compton,Milne}. These papers contain the first models for the diffusion of ``imprisoned" radiation which is trapped in a gas due to the fact that the radiation photons are absorbed and re-emitted by the gas molecules.

The equations of radiative transfer as well as many exact and approximated solutions of these equations in settings that are relevant to the study of planetary atmospheres are extensively discussed in \cite{mihalas2013foundations,Chandrasekhar,mihalas1978stellar,rutten1995radiative,oxenius}. In \cite{Golse-Bardos,B2}, the well-posedness theory and some asymptotic limits for the radiative transfer equations are considered. Additional properties of the radiative transfer equation including the validity of so-called the Rosseland approximation have been studied in \cite{B1,B3}.

The models considered in \cite{Compton,Chandrasekhar,Milne} do not take into account in an explicit manner the collisions between the gas molecules themselves. More precisely, collisions between gas molecules which can result in excited gas states without the intervention of the radiation are not taken into account. (See for instance (\ref{eq.nonelastic reactions}) for an example of this type of collisions). In the analysis of this paper, this type of collisions plays an essential role determining the relative ratios of molecules in the excited state and in the ground state and in particular to determine if the LTE property (\ref{Boltzmanratio}) is approximately satisfied or not.

Kinetic models which describe the interaction between a gas and radiation can be found in \cite{oxenius}. Some of these models have been used to study the physical conditions yielding LTE and non-LTE. On the other hand, the model characterized by the collisions (\ref{eq.elastic reactions}), (\ref{eq.nonelastic reactions}), (\ref{eq.reactions2}) has been introduced in \cite{RSM}. In \cite{italians} this model has been reduced to a generalized system of Euler equations, which include the effects of the radiation using a zeroth order Chapman-Enskog approximation. The well posedness of the resulting system has been also established in \cite{italians} assuming that the boundary conditions are those corresponding to the so-called Milne-Chandrasekhar problem, i.e. the diffusion of imprisoned radiation through the walls of the system. Additional mathematical properties of the model introduced in \cite{RSM} have been studied in \cite{Nouri1, Nouri2} (albeit with a different type of boundary conditions as the one considered in \cite{italians}).

An appealing property of the model with collisions (\ref{eq.elastic reactions}), (\ref{eq.nonelastic reactions}), (\ref{eq.reactions2}) introduced in \cite{RSM} is that it yields a system of equations simple enough to allow to obtain precise mathematical statements. In particular, we will be able to introduce precise scaling limits yielding either LTE or non-LTE. We will see that different scaling limits can result in models that we call an Euler-like system coupled with the radiation equation different from the one obtained in \cite{italians}.

\subsection{Kinetic equations for the gas-radiation system.}\label{sec.kinetic system}
In this paper, we consider several formal scaling limits and obtain several limiting models for the kinetic model above via the Chapman-Enskog expansions. A similar approach has been introduced in \cite{italians}. The goal of this paper is to introduce several scaling limits yielding LTE and non-LTE.

We will formulate the set of equations which describe a gas-radiation system characterized by the set of collisions (\ref{eq.elastic reactions}), (\ref{eq.nonelastic reactions}), (\ref{eq.reactions2}). We will assume that the gas is sufficiently dilute and the radiation weak enough to neglect the ternary or higher order collisions.

We first determine the velocities of the outgoing molecules in terms of the incoming molecules in collisions of the form (\ref{eq.elastic reactions}), (\ref{eq.nonelastic reactions}). In both types of collisions the total momentum is conserved.   We will denote the velocities of the incoming molecules as $v_1,v_2$ and the velocity of the outgoing molecules as $v_3, v_4$ for the elastic collisions and $\bar{v}_3,\bar{v}_4$ for the nonelastic collisions where in the case of the collisions (\ref{eq.nonelastic reactions}) we will assume that $\bar{v}_3$ is the velocity of the molecule in the excited state $\bar{A}$. The conservation of momentum then yields

\begin{equation}\label{ConsMoment}
    v_1+v_2=v_3+v_4.
 \end{equation}
In the case of the collisions (\ref{eq.elastic reactions}), the molecule velocities satisfy in addition the conservation of kinetic energy
\begin{equation}\label{ConsEnergy}
    \frac{|v_1|^2}{2}+\frac{|v_2|^2}{2}=\frac{|v_3|^2}{2}+\frac{|v_4|^2}{2}.
 \end{equation}
On the contrary, in the case of collisions with the form (\ref{eq.nonelastic reactions}) the energy formula (\ref{EnergyInel}) holds. 
Combining (\ref{ConsMoment}) and (\ref{ConsEnergy}) we obtain the following formula for the velocities of the outgoing molecules in terms of the incoming molecules

\begin{equation}\label{VelocConsCase}
   v_3=\frac{v_1+v_2}{2}+\frac{|v_1-v_2|}{2}\omega\text{  and  } v_4=\frac{v_1+v_2}{2}-\frac{|v_1-v_2|}{2}\omega
 \end{equation}
 where $\omega\in \stw$.

In the case of the nonelastic collisions, it follows from (\ref{EnergyInel}) and (\ref{ConsMoment}) that the post-collisional velocities $\bar{v}_3, \bar{v}_4$ given the pre-collisional velocities $\bar{v}_1$ and $\bar{v}_2$ have the form:

\begin{equation}\label{VelocInelCase}
  \bar{v}_3=\frac{\bar{v}_1+\bar{v}_2}{2}+\omega\sqrt{\frac{|\bar{v}_1-\bar{v}_2|^2}{4}-\epsilon_0}\text{  , } \bar{v}_4=\frac{\bar{v}_1+\bar{v}_2}{2}-\omega\sqrt{\frac{|\bar{v}_1-\bar{v}_2|^2}{4}-\epsilon_0} \text{,    } \omega\in \stw.
 \end{equation}

\subsubsection{Equation for the radiation energy}\label{sec.eq.rad}
Concerning the distribution of photons, since we are neglecting the Doppler effect, we assume that the radiation is monochromatic, and the only degree of freedom of the photons is their direction of motion. 
We will denote the distributions for the ground and the excited states as $\Fon=\Fon(t,x,v)$ and $\Ftw=\Ftw(t,x,v)$, respectively. On the other hand, we denote the intensity of the radiation at the frequency $\nu$ as $I_\nu=I_\nu(t,x,n)$ where $n\in\stw$. Then the radiative transfer equation for the radiation intensity $I_\nu$ was introduced in  \cite[Section 6.3]{oxenius} and is given by
\begin{equation}\label{eq.radiation.proto}
	\frac{1}{c}\frac{\partial I_\nu}{\partial t} + n\cdot \nabla_x I_\nu \\=\frac{\epsilon_0}{4\pi}\int_\rth dv\ \left[ A_{21}\Ftw (v)+B_{21}\Ftw (v)I_\nu(n)-B_{12}\Fon(v) I_\nu(n)\right],
\end{equation} 
where $A_{21}$ is the Einstein coefficient for spontaneous emission,  $B_{21}$ is the Einstein coefficient for stimulated emission, and $B_{12}$ is the Einstein coefficient for absorption (c.f. \cite{oxenius}). These coefficients are related by means of the Einstein relations namely
$$\frac{A_{21}}{B_{21}}=\frac{2h\nu^3}{c^2},\ \frac{B_{12}}{B_{21}}=1,$$ which are a consequence of the fact that at equilibrium,  detailed balance holds as well as the fact that the equilibrium distribution is the Planck distribution. Then \eqref{eq.radiation.proto} reduces to  \begin{equation}\label{eq.radiation}
	\frac{1}{c}\frac{\partial I_{\nu_0}}{\partial t} + n\cdot \nabla_x I_{\nu_0} =\frac{\epsilon_0 B_{12}}{4\pi}\int_\rth dv\ \left[ \frac{2h\nu_0^3}{c^2}\Ftw (v) \left(1+\frac{c^2}{2h\nu_0^3}I_{\nu_0}\right)-\Fon(v) I_{\nu_0}(n)\right].
\end{equation}
    Here we assume that $B_{12}$ is constant. Notice that in \eqref{eq.radiation} the emission and the absorption of photons are isotropic. 
Further detailed physical descriptions on the equation including more complicated nonisotropic cases can be found in \cite{oxenius}.

The equation for the photon number density $Q=Q(t,x,n;\nu_0)$ is then
\begin{equation}\notag
	\frac{\partial Q}{\partial t} + cn\cdot \nabla_x Q =\frac{B_{12}}{4\pi}\int_\rth dv\ \left[ \frac{2h\nu_0^3}{c^2}\Ftw (v)\left(1+\frac{c^3}{2\nu_0^2}Q\right)-\Fon (v) c\epsilon_0 Q\right],
\end{equation} since the photon energy density $I_{\nu_0}$ can be written as 
$I_{\nu_0}=c\epsilon_0 Q$  (cf. \cite{oxenius,Chandrasekhar}).


\subsubsection{Kinetic equations for the gas molecules}\label{sec.Kinetic eqs}
We now introduce the kinetic equations for the molecules.   We first note that the gain of the excited molecules by the photon absorption, which is the same as the loss term for molecules in the ground state, is given by
$$\frac{ B_{12}}{4\pi}\int_{\mathbb{S}^2} dn\ \Fon  I_{\nu_0}
.$$
Also, the gain of the ground-state molecules or the loss of the excited state due to the emission of photons is given by
$$\frac{ B_{12}}{4\pi}\int_{\mathbb{S}^2} dn\  \frac{2h\nu_0^3}{c^2}\Ftw \left(1+\frac{c^2}{2h\nu_0^3}I_{\nu_0}\right).$$ See (6.3.8) of \cite{oxenius}. 
Then the evolution equations for $F=(\Fon,\Ftw)^\top $ which take all the interactions \eqref{eq.elastic reactions}-\eqref{eq.reactions2} into account are then given by
\begin{equation}\label{eq.kinetic1}
	\frac{D\Fon }{Dt}=\mathcal{K}_{el}^{(1,1)}[\Fon ,\Fon ]+\mathcal{K}_{el}^{(1,2)}[\Fon ,\Ftw ]+\Knon[F,F]+\int_{\mathbb{S}^2} dn\ h_{rad}[\Fon ,\Ftw ,Q],
\end{equation}and 
\begin{equation}\label{eq.kinetic2}
	\frac{D\Ftw }{Dt}=\mathcal{K}_{el}^{(2,1)}[\Ftw ,\Fon ]+\mathcal{K}_{el}^{(2,2)}[\Ftw ,\Ftw ]+ \Kntw[F,F]-\int_{\mathbb{S}^2} dn\ h_{rad}[\Fon ,\Ftw ,Q],
\end{equation}
 with the equation for $Q=Q(t,x,n)$ which can also be written as
\begin{equation}\label{eq.kineticphoton}
	\frac{\partial Q}{\partial t} + cn\cdot \nabla_x Q=\int_\rth dv\ h_{rad}[\Fon ,\Ftw ,Q],
\end{equation}
where $$ h_{rad}[\Fon ,\Ftw ,Q]=\frac{B_{12}}{4\pi} \left[\frac{2h\nu_0^3}{c^2}\Ftw (v)\left(1+\frac{c^3}{2\nu_0^2}Q\right)-\Fon (v) c\epsilon_0 Q \right].$$
 Here we denote the collision terms due to the elastic collisions and the nonelastic collisions as $\mathcal{K}^{(i,j)}_{el}$ and $\mathcal{K}^{(j)}_{non.el}$,  respectively. 
Here, the elastic collision terms $\mathcal{K}^{(i,j)}_{el}$ are given as the standard classical Boltzmann collision operators
\begin{equation}\label{eq.kel}\mathcal{K}^{(i,j)}_{el} [F,G](v_1)
\eqdef\int_\rth dv_2 \int_{\mathbb{S}^2} d\omega \ B^{(i,j)}_{el}(|v_1-v_2|,(v_1-v_2)\cdot\omega)(F(v_3)G(v_4)-F(v_1)G(v_2)),\end{equation} for $i,j=1,2$ where the post-collisional velocities $v_3$ and $v_4$ are given by \eqref{VelocConsCase}. 
Also, by Appendix \ref{sec.weakfor derivation}, the nonelastic operators are defined as 
\begin{equation}\label{eq.knonel1}\Knon[F,F]\eqdef 2\mathcal{K}_{1,1}[F,F]+\mathcal{K}^{(1)}_{1,2}[F,F],\end{equation}
\begin{multline}\label{eq.knonel11}\mathcal{K}_{1,1}[F,F](\bar{v}_1)\eqdef \int_\rth d\bar{v}_2 \int_\stw d\omega \frac{\sqrt{|\bar{v}_1-\bar{v}_2|^2-4\epsilon_0}}{2|\bar{v}_1-\bar{v}_2|}\\\times B_{non.el}(|\bar{v}_1-\bar{v}_2|,\omega\cdot (\bar{v}_1-\bar{v}_2)) (\bar{F}_3^{(2)}\bar{F}_4^{(1)}-\bar{F}^{(1)}_1 \bar{F}_2^{(1)}),\end{multline}
\begin{multline}\label{eq.knonel12}\mathcal{K}^{(1)}_{1,2}[F,F](\bar{v}_4)\eqdef \int_\rth d\bar{v}_3 \int_\stw d\omega \frac{\sqrt{|\bar{v}_3-\bar{v}_4|^2+4\epsilon_0}}{2|\bar{v}_3-\bar{v}_4|}\\\times B_{non.el}(|\bar{v}_3-\bar{v}_4|,\omega\cdot (\bar{v}_3-\bar{v}_4))  (\bar{F}^{(1)}_1\bar{F}^{(1)}_2-\bar{F}^{(2)}_3\bar{F}^{(1)}_4),\end{multline}and
\begin{multline}\label{eq.knonel2}\Kntw[F,F](\bar{v}_3)\eqdef \int_\rth d\bar{v}_4 \int_\stw d\omega \frac{\sqrt{|\bar{v}_3-\bar{v}_4|^2+4\epsilon_0}}{2|\bar{v}_3-\bar{v}_4|}\\\times B_{non.el}(|\bar{v}_3-\bar{v}_4|,\omega\cdot (\bar{v}_3-\bar{v}_4))   (\bar{F}^{(1)}_1\bar{F}^{(1)}_2-\bar{F}^{(2)}_3\bar{F}^{(1)}_4).\end{multline} The Jacobian is due to the fact that the mapping $(\bar{v}_1,\bar{v}_2)\to (\bar{v}_3,\bar{v}_4)$ is non-sympletic for the non-elastic collisions.  Throughout the paper, we assume that all the collisional cross-sections $B_{el}$ and $B_{non.el}$ have an angular cutoff.
In the case of hard-sphere interactions, we have (c.f. \eqref{non.el.cross.section})
\begin{equation}\notag
B_{non.el}(|\bar{v}_3-\bar{v}_4|,\omega\cdot (\bar{v}_3-\bar{v}_4))= C_0|\omega\cdot (\bar{v}_3-\bar{v}_4)| \text{ (or simply} =C_0| \bar{v}_3-\bar{v}_4|),
\end{equation} for some $C_0>0.$  We remark that this explicit hard-sphere assumption is not necessary for the derivations in the case of LTE as long as it is with an angular cutoff.   Here  $F_j^{(k)}=F^{(k)}(v_j)$ and the operator $\mathcal{K}_{i,j}$ refers to the nonelastic operator that has a collision between molecules $i$ and $j$ as its loss term and $\mathcal{K}_{i,j}^{(k)}$ affects the change $\partial_t F^{(k)}.$ 
We also used the shorthand notations $F^{(i)}_k=F^{(i)}(v_k)$ and $\bar{F}^{(i)}_k=F^{(i)}(\bar{v}_k)$ for $i=1,2$ and $k=1,2,3,4.$

The derivation of the nonelastic collision operators \eqref{eq.knonel1}, \eqref{eq.knonel11}, \eqref{eq.knonel12} and \eqref{eq.knonel2} via the weak formulation of the following section below will be given in Appendix \ref{sec.weakfor derivation}.

\subsection{Outline of the paper}
The goal of this paper is to provide different scaling limits that yield different limiting system of hydrodynamic and radiation equations in both  LTE and non-LTE situations. We derive in different asymptotic limit models which consist in a set of Euler-like equations for the gas, coupled with the equation for the radiation both in LTE and non-LTE situations in Section \ref{sec.LTEEulerlimit} and Section \ref{sec.Euler nonLTE}.  Different regimes arise from the relevance of the non-elastic collisions.
Then we will also study the well-posedness of the boundary value problems for the stationary Euler equations under several physical boundary conditions for the gas molecules and the radiation in Section \ref{sec.LTEexist} and Section \ref{sec.nonLTEexist}. We show in Section \ref{sec.LTEexist} and Section \ref{sec.nonLTEexist} that in stationary situations the radiation equation determines uniquely the distribution of temperatures in the particular scaling limits considered in this paper. In Section \ref{sec.nonLTEnonexist}, we will assume that the elastic collisions between the ground-state and the excited-state take place less often so that there is not enough mixture of temperature and velocities. In this case we prove that there is no stationary solutions with zero velocities to the Euler-like system. Then in section \ref{sec.nonLTE.3level} we discuss more complicated Euler-like system with molecules of three different levels and obtain non-LTE steady states without any additional assumptions on the scales of elastic collisions among molecules.   In the Appendix, we introduce how to derive the explicit forms of the nonelastic Boltzmann operator of \eqref{eq.knonel1}-\eqref{eq.knonel12} for the reaction \eqref{eq.nonelastic reactions} via the weak formulation \eqref{weak.formulation}. Also, we introduce the detailed balance via the Planck distribution as the equilibrium.

\begin{remark}
  We remark that it is possible to derive the Navier-Stokes-Fourier limit using well-established methods of kinetic theory. However, in most of the applications in fields like astrophysics, the Euler approximation is more relevant. 
\end{remark}

\section{Euler limit yielding LTE for the gas molecules}\label{sec.LTEEulerlimit}
In this section we derive a system of equations describing the evolution the radiation-gas systems under consideration. The resulting system, which will be derived applying the Chapman-Enskog method to a suitably rescaled version of the kinetic system \eqref{eq.radiation}-\eqref{eq.kinetic2} for gas molecules and radiation, is a Euler system of equations for a compressible gas coupled with the radiation equation.

\subsection{Rescaled kinetic equations for molecules and photons}
\label{sec.rescaled.kinetic}
We now compare the size of the different collision terms in \eqref{eq.radiation}-\eqref{eq.kinetic2}. This will allow us to formulate precise scaling limits for which the solutions of the equations exhibit different behaviours. We introduce also a suitable system of units in order to reformulate the problem in a simple form. 

Suppose that we denote the order of magnitude of the cross section for the non-elastic collisions as $\|B_{non.el}\|$ and the order of magnitude of the cross section associated to the elastic collisions as $\|B_{el}\|$.
In this subsection, we will rescale the variables and rewrite the system of equations \eqref{eq.radiation}-\eqref{eq.kinetic2} in a simpler way. We denote $\alpha$ as the mean-free path. Then we rescale the variables $t\mapsto t'$ and $x\mapsto y$ and the kernels $\|B_{el}\|\mapsto \alpha\|B_{el}\|$ and $\|B_{non.el}\|\mapsto \frac{\alpha}{\eta}\|B_{non.el}\|$ with some constant ratio $\eta$ between elastic and nonelastic collisions such that $\frac{B_{12}}{4\pi }\approx 1$ and $\frac{2h\nu_0^3}{c^2}\approx1$, and the system of equations \eqref{eq.radiation}-\eqref{eq.kinetic2} now becomes
\begin{equation}\label{eq.rescaled kinetic equations}
    \begin{split}
        &\frac{\partial}{\partial t'}F+ v\cdot \nabla_y F=\frac{1}{\alpha} \left(\mathcal{K}_{el}[F,F]+\eta \mathcal{K}_{non.el}[F,F]\right)+\mathcal{R}_p[F,G],\\&\frac{1}{c}\frac{\partial}{\partial t'}G+n\cdot \nabla_y G=\mathcal{R}_r[F,G],
    \end{split}
\end{equation}where 
$F=(\Fon,\Ftw)^\top$ is the rescaled vector solution for the distributions of molecules, $G\eqdef\frac{c^2}{2h\nu_0^3}I$  is the rescaled distribution for the photon intensity $I$,
   \begin{equation}\label{eq.Rp}\mathcal{R}_p[F,G]\eqdef \begin{pmatrix}\int_{\mathbb{S}^2}[\Ftw(1+G)-\Fon G]dn\\
   -\int_{\mathbb{S}^2}[\Ftw(1+G)-\Fon G]dn\end{pmatrix},\end{equation}\begin{equation}\label{eq.Rr}\mathcal{R}_r[F,G]\eqdef \epsilon_0\int_{\rth}[\Ftw(1+G)-\Fon G]dv,\end{equation} and
\begin{multline*}\mathcal{K}[F,F]\eqdef \mathcal{K}_{el}[F,F]+\eta \mathcal{K}_{non.el}[F,F]\\=\begin{pmatrix}\mathcal{K}^{(1,1)}_{el}[\Fon,\Fon]+\mathcal{K}^{(1,2)}_{el}[\Fon,\Ftw]+2\eta \mathcal{K}_{1,1}[F,F]+\eta \mathcal{K}_{1,2}^{(1)}[F,F]\\\mathcal{K}^{(2,1)}_{el}[\Ftw,\Fon]+
\mathcal{K}^{(2,2)}_{el}[\Ftw,\Ftw]+\eta \Kntw[F,F]\end{pmatrix},\end{multline*}where the operators $\mathcal{K}_{el}$ and $\mathcal{K}_{non.el}$ are defined in \eqref{eq.kel}-\eqref{eq.knonel2}.
By defining 
\begin{equation}\label{W-def} W_- (v_1,v_2;v_3,v_4)\eqdef \frac{\sqrt{|v_1-v_2|^2-4\epsilon_0}}{2|v_1-v_2|}B_{non.el}(|v_1-v_2|,\omega\cdot (v_1-v_2)),\end{equation} and 
\begin{equation}\label{W+def} W_+ (v_3,v_4;v_1,v_2)\eqdef \frac{\sqrt{|v_3-v_4|^2+4\epsilon_0}}{2|v_3-v_4|}B_{non.el}(|v_3-v_4|,\omega\cdot (v_3-v_4)),\end{equation}we can rewrite the operators as\begin{equation}\label{eq.K11}\mathcal{K}_{1,1}[F,F](v)=\int_\rth dv_2 \int_\stw d\omega\ W_- (v,v_2;v_3,v_4) (F_3^{(2)}F_4^{(1)}-\Fon F_2^{(1)}),\end{equation}
\begin{equation}\label{eq.K112}\mathcal{K}^{(1)}_{1,2}[F,F](v)= \int_\rth dv_3 \int_\stw d\omega\ W_+ (v_3,v;v_1,v_2) (F^{(1)}_1F^{(1)}_2-F^{(2)}_3F^{(1)}),\end{equation}and
\begin{equation}
    \label{Kntw}\Kntw[F,F](v)= \int_\rth dv_4 \int_\stw d\omega\ W_+ (v,v_4;v_1,v_2)(F^{(1)}_1F^{(1)}_2-F^{(2)}F^{(1)}_4).
\end{equation}At equilibrium there is a cancellation of the terms associated to individual collisions and the detailed balance holds. 
\begin{remark}A reasonable approximation in most physical applications is to let $c\to\infty$ (under the assumption that the velocities of the molecules are of order one). In our scaling limit, we have $\frac{h\nu_0^3}{c^2}\approx 1$ with given $h\nu_0=\epsilon_0\approx 1$. Thus, we have $\nu_0\approx c \to \infty$ and $h\approx \frac{1}{\nu_0}\to 0^+$ in the limit. Then the system \eqref{eq.rescaled kinetic equations} becomes the following system of a time-dependent evolution equations for the gas molecules and a stationary equation for the radiation:
\begin{equation}\label{stationary equations}
    \begin{split}
        &\partial_t F+ v\cdot \nabla_y F=\frac{1}{\alpha} \left(\mathcal{K}_{el}[F,F]+\eta \mathcal{K}_{non.el}[F,F]\right)+\mathcal{R}_p[F,G],\\&n\cdot \nabla_y G=\mathcal{R}_r[F,G].
    \end{split}
\end{equation}Also the limit $c\to \infty$ implies that the broadening of the spectral lines due to the Doppler effect in the collisions are negligible.\end{remark}

 \subsection{Chapman-Enskog Expansions} 
 We consider \eqref{stationary equations} in the limit $\alpha\to 0^+$ with $\eta$ of order one. Then we expect LTE because the non-elastic collisions bring the distribution of gas molecule velocities to LTE.  We consider the linearization of the system \eqref{stationary equations} around the equilibrium solution $F_{eq}=(F_{eq}^{(1)},F_{eq}^{(2)})^\top$ which is defined via the local Maxwellians
 \begin{equation}\label{Feq}F_{eq}^{(j)}=F_{eq}^{(j)}(\rho,u,T)\eqdef \frac{c_0 \rho}{T^{3/2}}\exp \left(-\frac{1}{T}\left(|v-u|^2+2\epsilon_0\delta_{j,2}\right)\right),\ j=1,2,\end{equation} where 
the Boltzmann constant in the rest of the paper is chosen as $\frac{1}{2}$, $$\delta_{j,2}\eqdef 1 \text{ if }j=2,\text{ and }=0,\text{ otherwise}.$$ Also, $\rho=\rho(t,x)$ is not the total molecule density but only the density of the ground-state molecule $(j=1)$ for the sake of simpler computations. 
 Here $c_0$ is the normalization constant $c_0=\left(\int_\rth e^{-|\xi|^2}d\xi\right)^{-1}=\pi^{-3/2}>0$, $u=u(t,x)$ is the macroscopic mean velocity, and $T=T(t,x)$ is the temperature, which are defined as
\begin{equation}
    \begin{split}
        &\rho\eqdef \int_\rth F^{(1)}dv,\\
         &u_{i}\eqdef \frac{1}{\rho}\int_\rth v_i F^{(1)}dv,\text{ for each }i=1,2,3,\\
         &T\eqdef \frac{2}{3\rho}\int_\rth |v-u|^2F^{(1)}dv.
    \end{split}
\end{equation}Note that this equilibrium solution $F_{eq}$ satisfies that 
 $$\mathcal{K}[F_{eq},F_{eq}]=0.$$

Then we consider the Chapman-Enskog expansion with the mean-free path $\alpha$ such that
\begin{equation}
    \label{Chapman}\Fj=F_{eq}^{(j)}(1+f^{(j)})=F_{eq}^{(j)}(1+\alpha f_1^{(j)}+\alpha^2 f_2^{(j)}+\cdots).
\end{equation} This choice of the expansion will result in a self-adjoint operator acting on $f$.
Then we linearize the operator $\mathcal{K}$ as
$$\mathcal{K}[F,F]=\mathcal{K}[F]=
\mathcal{K}[F_{eq}(1+\alpha f_1+\alpha^2 f_2+\cdots)]=\alpha L[F_{eq};f_1]+O(\alpha^2),$$
where $$L[F_{eq};f_1]\eqdef \begin{pmatrix}L^{(1,1)}_{el}[f_1]+L^{(1,2)}_{el}[f_1]+2\eta L_{1,1}[f_1]+\eta L_{1,2}^{(1)}[f_1]\\L^{(2,1)}_{el}[f_1]+
L^{(2,2)}_{el}[f_1]+\eta L^{(2)}_{non.el}[f_1]\end{pmatrix}.$$
Here each linear operator are defined as
\begin{equation}\label{def.linear operator}
\begin{split}
L^{(i,j)}_{el}[f_1]&\eqdef\mathcal{K}^{(i,j)}_{el}[F_{eq}^{(i)},F_{eq}^{(j)}f_1^{(j)}]+\mathcal{K}^{(i,j)}_{el}[F_{eq}^{(i)}f_1^{(i)},F_{eq}^{(j)}],\\
L_{(1,1)}[f_1]&\eqdef\mathcal{K}_{(1,1)}[F_{eq},F_{eq}f_1]+\mathcal{K}_{(1,1)}[F_{eq}f_1,F_{eq}],\\
L^{(1)}_{1,2}[f_1]&\eqdef\mathcal{K}^{(1)}_{1,2}[F_{eq},F_{eq}f_1]+\mathcal{K}^{(1)}_{1,2}[F_{eq}f_1,F_{eq}],\text{ and }\\
L^{(2)}_{non.el}[f_1]&\eqdef\mathcal{K}^{(2)}_{non.el}[F_{eq},F_{eq}f_1]+\mathcal{K}^{(2)}_{non.el}[F_{eq}f_1,F_{eq}].
\end{split}
\end{equation}
Then we expand each operator as a power series of $\alpha$ as 
$$\mathcal{K}^{(i,j)}_{el}[F^{(i)},F^{(j)}]=\mathcal{K}^{(i,j)}_{el}[F^{(i)}_{eq},F^{(j)}_{eq}]+\alpha L^{(i,j)}_{el}[f_1]+\cdots,$$
$$\mathcal{K}_{1,1}[F,F]=\mathcal{K}_{1,1}[F]=\mathcal{K}_{1,1}[F_{eq}]+\alpha L_{1,1}[f_1]+\cdots,$$
$$\mathcal{K}^{(1)}_{1,2}[F,F]=\mathcal{K}^{(1)}_{1,2}[F]=\mathcal{K}_{1,2}^{(1)}[F_{eq}]+\alpha L_{1,2}^{(1)}[f_1]+\cdots,$$
and
$$\Kntw[F,F]=\Kntw[F]=\Kntw[F_{eq}]+\alpha L_{1,2}^{(2)}[f_1]+\cdots.$$
Note that the detailed balance conditions imply that
\begin{equation}\label{K.null.eq2}\mathcal{K}^{(i,j)}_{el}[F^{(i)}_{eq},F^{(j)}_{eq}]=\mathcal{K}_{1,1}[F_{eq}]=\mathcal{K}^{(1)}_{1,2}[F_{eq}]=\Kntw[F_{eq}]=0.
\end{equation}
In the rest of the section, we will use the standard $L^2 $ inner product with all the two species taken into account: 
\begin{equation}\label{eq.L2 inner product} \left\langle F,G\right\rangle \eqdef\sum_{j=1}^2 \int_\rth dv \ \Fj\cdot G^{(j)}.\end{equation}
This type of the inner product will leads us to the hydrodynamic limit in the LTE situation.
\subsection{Kernel of $L$}We now consider the kernel of $L$. To this end,
 we fix $( \rho_0, u_0, T_0)$ and consider the Taylor expansion of $F_{eq}$ around the point $( \rho_0, u_0, T_0)$. Define 
$ F_{eq,0}=F_{eq}( \rho_0, u_0, T_0).$
By \eqref{K.null.eq2}, we first have
$\mathcal{K}[F_{eq}]=0.$ Thus,
\begin{multline*}
    0=\mathcal{K}[F_{eq}]
    =\mathcal{K}\left[ F_{eq,0}+\frac{\partial  F_{eq,0}}{\partial \rho}(\rho- \rho_0) +\frac{\partial  F_{eq,0}}{\partial u}(u- u_0)+\frac{\partial  F_{eq,0}}{\partial T}(T- T_0)+\cdots\right]\\
    =\mathcal{K}\left[ F_{eq,0}\right]+L\left[\frac{\partial  F_{eq,0}}{\partial \rho}(\rho- \rho_0) +\frac{\partial  F_{eq,0}}{\partial u}(u- u_0)+\frac{\partial  F_{eq,0}}{\partial T}(T- T_0)\right]+\cdots\\
    =L\left[\frac{\partial  F_{eq,0}}{\partial \rho}(\rho- \rho_0) +\frac{\partial  F_{eq,0}}{\partial u}(u- u_0)+\frac{\partial  F_{eq,0}}{\partial T}(T- T_0)\right]+\cdots.
\end{multline*}
Thus, we conclude
$$L\left[\frac{\partial  F_{eq,0}}{\partial \rho}\right]=L\left[\frac{\partial  F_{eq,0}}{\partial u}\right]=L\left[\frac{\partial  F_{eq,0}}{\partial T}\right]=0.$$
Note that 
$$\frac{\partial  F_{eq,0}}{\partial \rho}(\rho_0,u_0,T_0)=\frac{ F_{eq,0}}{ \rho_0},\ \frac{\partial  F_{eq,0}}{\partial u}(\rho_0,u_0,T_0)=\frac{2}{ T_0} F_{eq,0} (v- u_0),
$$
and
$$\frac{\partial  F^{(j)}_{eq,0}}{\partial T}(\rho_0,u_0,T_0)= F^{(j)}_{eq,0}\left[-\frac{3}{2}\frac{1}{ T_0}+\frac{|v- u_0|^2+2\epsilon_0\delta_{j,2}}{ T_0^2}\right],\ j=1,2.$$
Since the multiplicative constants can be removed, we thus obtain the kernel of $L$ as
\begin{equation}\label{kernel.L}\ker(L)=\text{span} \{ F_{eq,0},  F_{eq,0}(v- u_0),  F_{eq,0}|v- u_0|^2\}.\end{equation}

\subsection{Derivation of a family of Euler equations coupled with radiation}\label{sec.Euler limit}
By postulating the Chapman-Enskog expansion in \eqref{Chapman} we observe that the system \eqref{eq.rescaled kinetic equations} yields that in the leading order in $\alpha $ we have
\begin{multline*}
   \partial_t F_{eq}+ v\cdot \nabla_y F_{eq} 
   = L[F_{eq};f_1]+\mathcal{R}_p[F_{eq},G]\\=\begin{pmatrix}L^{(1,1)}_{el}[f_1]+L^{(1,2)}_{el}[f_1]+2\eta L_{1,1}[f_1]+\eta L_{1,2}^{(1)}[f_1]\\L^{(2,1)}_{el}[f_1]+
L^{(2,2)}_{el}[f_1]+\eta L^{(2)}_{non.el}[f_1]\end{pmatrix}+\mathcal{R}_p[F_{eq},G],
\end{multline*}where $\mathcal{K}[F_{eq}(1+\alpha f)]=\alpha L[F_{eq};f_1]+O(\alpha^2)$ and the linear operators are explicitly defined in \eqref{def.linear operator}.  
Then we take the inner product of the leading-order equation above against $1, (v-u),$ and $|v-u|^2+2\epsilon_0\delta_{j,2}$ to obtain that 
\begin{multline}\label{macro eq 1}
    \left\langle 1,\frac{\partial F_{eq}}{\partial \rho} [\partial_t\rho+v\cdot \nabla\rho]\right\rangle+\left\langle 1,\sum_{i=1}^3\frac{\partial F_{eq}}{\partial u_{i}} [\partial_tu_{i}+v\cdot \nabla u_{i}]\right\rangle+\left\langle 1,\frac{\partial F_{eq}}{\partial T} [\partial_tT+v\cdot \nabla T]\right\rangle\\= \left\langle 1,L[F_{eq};f_1]\right\rangle+\left\langle 1,\mathcal{R}_p[F_{eq},G]\right\rangle=\left\langle 1,\mathcal{R}_p[F_{eq},G]\right\rangle,
\end{multline}
\begin{multline}\label{macro eq 2}
    \left\langle (v-u),\frac{\partial F_{eq}}{\partial \rho} [\partial_t\rho+v\cdot \nabla\rho]\right\rangle+\left\langle (v-u),\sum_{i=1}^3\frac{\partial F_{eq}}{\partial u_{i}} [\partial_tu_{i}+v\cdot \nabla u_{i}]\right\rangle+\left\langle (v-u),\frac{\partial F_{eq}}{\partial T} [\partial_tT+ v\cdot \nabla T]\right\rangle\\= \left\langle (v-u),L[F_{eq};f_1]\right\rangle+\left\langle (v-u),\mathcal{R}_p[F_{eq},G]\right\rangle=\left\langle (v-u),\mathcal{R}_p[F_{eq},G]\right\rangle,
\end{multline}
and
\begin{multline}\label{macro eq 3}
    \bigg\langle |v-u|^2+2\epsilon_0\delta_{j,2},\frac{\partial F_{eq}}{\partial \rho} [\partial_t\rho+v\cdot \nabla\rho]
    +\sum_{i=1}^3\frac{\partial F_{eq}}{\partial u_{i}} [\partial_tu_{i}+v\cdot \nabla u_{i}]
    +\frac{\partial F_{eq}}{\partial T} [\partial_tT+v\cdot \nabla T]\bigg\rangle\\= \left\langle |v-u|^2+2\epsilon_0\delta_{j,2},L[F_{eq};f_1]\right\rangle+\left\langle |v-u|^2+2\epsilon_0\delta_{j,2},\mathcal{R}_p[F_{eq},G]\right\rangle\\=\left\langle |v-u|^2+2\epsilon_0\delta_{j,2},\mathcal{R}_p[F_{eq},G]\right\rangle,
\end{multline} by the kernel condition of $L$ in \eqref{kernel.L} and the symmetry of $L$.

We now compute the scalar products in detail. We first consider the scalar products with 1. By symmetry we obtain \begin{multline*}
\left\langle 1,\frac{\partial F_{eq}}{\partial \rho} [\partial_t\rho+v\cdot \nabla\rho]\right\rangle
=\left\langle 1,\frac{\partial F_{eq}}{\partial \rho} [\partial_t\rho+u\cdot \nabla\rho]\right\rangle+\left\langle 1,\frac{\partial F_{eq}}{\partial \rho} [(v-u)\cdot \nabla\rho]\right\rangle\\
=\frac{ 1}{ \rho}[\partial_t\rho+u\cdot \nabla\rho]\left[1+e^{-\frac{2\epsilon_0}{T}}\right]\int_\rth M(v)dv=[\partial_t\rho+u\cdot \nabla\rho]\left[1+e^{-\frac{2\epsilon_0}{T}}\right],
\end{multline*}where the local Maxwellian $M(v)=M(v;\rho,u,T)$ is defined in \eqref{LocalMaxwellian}.
and also
\begin{multline*}
\left\langle 1,\frac{\partial F_{eq}}{\partial u_{i}} [\partial_tu_{i}+v\cdot \nabla u_{i}]\right\rangle
=\left\langle 1,\frac{2}{T}(v-u)_iF_{eq} [\partial_tu_{i}+v\cdot \nabla u_{i}]\right\rangle\\
=\frac{2}{T}\left(\left\langle 1,F_{eq}(v-u)_i  [(v-u)\cdot \nabla u_{i}]\right\rangle+\left\langle 1,F_{eq}(v-u)_i  [\partial_tu_{i}+u\cdot \nabla u_{i}]\right\rangle\right)\\
=2\left[1+e^{-\frac{2\epsilon_0}{T}}\right]\partial_{y_i}u_{i} \frac{\pi^{-3/2}\rho}{3} \frac{3\pi^{3/2}}{2} =\left[1+e^{-\frac{2\epsilon_0}{T}}\right](\partial_{y_i}u_{i}) \rho  .
\end{multline*}
In addition, we have\begin{multline*}
\left\langle 1,\frac{\partial F_{eq}}{\partial T} [\partial_tT+v\cdot \nabla T]\right\rangle
=\left\langle 1,\frac{\partial F_{eq}}{\partial T} [\partial_tT+u\cdot \nabla T]\right\rangle+\left\langle 1,\frac{\partial F_{eq}}{\partial T} [(v-u)\cdot \nabla T]\right\rangle\\
=[\partial_tT+u\cdot \nabla T]\bigg[ -\frac{3}{2T}(1+e^{-\frac{2\epsilon_0}{T}})\rho +\frac{3}{2T}\rho+\frac{e^{-\frac{2\epsilon_0}{T}}}{T^2}\left(\frac{3\rho T}{2}+2\epsilon_0\rho\right)\bigg]
=2\epsilon_0\rho[\partial_tT+u\cdot \nabla T]\frac{e^{-\frac{2\epsilon_0}{T}}}{T^2}.
\end{multline*}Lastly, 
\begin{multline*}
    \langle 1, \mathcal{R}_p(F_{eq},G)\rangle = \int_\rth dv\int_{\mathbb{S}^2}dn\bigg[e^{-\frac{2\epsilon_0}{T}}M(v)(1+G(n))-M(v)G(n)\bigg]\\
    -\int_\rth dv\int_{\mathbb{S}^2}dn\bigg[e^{-\frac{2\epsilon_0}{T}}M(v)(1+G(n))-M(v)G(n)\bigg]=0.
\end{multline*}

Now we move onto the next step and compute the scalar product with $(v-u)$. We first obtain
\begin{multline*}
\left\langle v-u,\frac{\partial F_{eq}}{\partial \rho} [\partial_t\rho+v\cdot \nabla\rho]\right\rangle
=\left\langle v-u,\frac{\partial F_{eq}}{\partial \rho} [\partial_t\rho+u\cdot \nabla\rho]\right\rangle+\left\langle v-u,\frac{\partial F_{eq}}{\partial \rho} [(v-u)\cdot \nabla\rho]\right\rangle\\
=\frac{ 1}{ 3\rho} \nabla\rho\left[1+e^{-\frac{2\epsilon_0}{T}}\right]\int_\rth |v-u|^2M(v)dv=\frac{ T}{ 2} \nabla\rho\left[1+e^{-\frac{2\epsilon_0}{T}}\right].
\end{multline*} Furthermore, we have for $i,j=1,2,3,$
\begin{multline*}
\left\langle (v-u)_j,\frac{\partial F_{eq}}{\partial u_{i}} [\partial_tu_{i}+v\cdot \nabla u_{i}]\right\rangle
=\left\langle (v-u)_j,\frac{2}{T}(v-u)_iF_{eq} [\partial_tu_{i}+v\cdot \nabla u_{i}]\right\rangle\\
=\frac{2}{3T}[\partial_tu_{i}+u\cdot \nabla u_{i}]\left[1+e^{-\frac{2\epsilon_0}{T}}\right]\int_\rth  |v-u|^2  M(v)dv\text{ if }j=i\text{ and }=0,\text{ otherwise}\\
=\rho[\partial_tu_{i}+u\cdot \nabla u_{i}]\left[1+e^{-\frac{2\epsilon_0}{T}}\right]\text{ if }j=i\text{ and }=0,\text{ otherwise}.
\end{multline*}
In addition, we deduce that\begin{multline*}
\left\langle v-u,\frac{\partial F_{eq}}{\partial T} [\partial_tT+v\cdot \nabla T]\right\rangle
=\bigg\langle v-u,[(v-u)\cdot \nabla T]\bigg(-\frac{3}{2}\frac{c_0\rho}{T^{\frac{5}{2}}}\exp\left(-\frac{1}{T}(|v-u|^2+2\epsilon_0\delta_{j,2})\right)\\+\frac{c_0\rho}{T^{\frac{7}{2}}}[|v-u|^2+2\epsilon_0\delta_{j,2}]\exp\left(-\frac{1}{T}(|v-u|^2+2\epsilon_0\delta_{j,2})\right)\bigg)\bigg\rangle
= \nabla T\left[(1+e^{-\frac{2\epsilon_0}{T}})\frac{\rho}{2}+\frac{\epsilon_0\rho}{T}e^{-\frac{2\epsilon_0}{T}}\right].
\end{multline*}
Also, we obtain
$$\langle v-u, \mathcal{R}_p(F_{eq},G)\rangle=0.$$

Lastly, we compute the energy terms. We first have
\begin{multline*}
\left\langle |v-u|^2+2\epsilon_0\delta_{j,2},\frac{\partial F_{eq}}{\partial \rho} [\partial_t\rho+v\cdot \nabla\rho]\right\rangle\\
=\left\langle |v-u|^2+2\epsilon_0\delta_{j,2},\frac{\partial F_{eq}}{\partial \rho} [\partial_t\rho+u\cdot \nabla\rho]\right\rangle=\frac{ 1}{ \rho} [\partial_t\rho+u\cdot\nabla\rho]\left\langle |v-u|^2+2\epsilon_0\delta_{j,2},F_{eq} \right\rangle\\
=\frac{ 3T}{ 2} [\partial_t\rho+u\cdot\nabla\rho]\left[1+e^{-\frac{2\epsilon_0}{T}}\right]
+2\epsilon_0[\partial_t\rho+u\cdot\nabla\rho]e^{-\frac{2\epsilon_0}{T}}.
\end{multline*} Furthermore, we obtain
\begin{multline*}
\left\langle |v-u|^2+2\epsilon_0\delta_{j,2},\frac{\partial F_{eq}}{\partial u_{i}} [\partial_tu_{i}+v\cdot \nabla u_{i}]\right\rangle
=\left\langle |v-u|^2+2\epsilon_0\delta_{j,2},\frac{2}{T}(v-u)_iF_{eq} [\partial_tu_{i}+v\cdot \nabla u_{i}]\right\rangle\\
=\left[1+e^{-\frac{2\epsilon_0}{T}}\right]\partial_{y_i} u_{i} \frac{5\rho T}{2}
+2\epsilon_0\rho e^{-\frac{2\epsilon_0}{T}}\partial_{y_i} u_{i} .
\end{multline*}
In addition, we have\begin{multline*}
\left\langle |v-u|^2+2\epsilon_0\delta_{j,2},\frac{\partial F_{eq}}{\partial T} [\partial_tT+v\cdot \nabla T]\right\rangle
=\left\langle |v-u|^2+2\epsilon_0\delta_{j,2},\frac{\partial F_{eq}}{\partial T} [\partial_tT+u\cdot \nabla T]\right\rangle\\
=\rho[\partial_tT+u\cdot\nabla T]\bigg[ \frac{3}{2}(1+e^{-\frac{2\epsilon_0}{T}})
+\frac{3\epsilon_0}{T}e^{-\frac{2\epsilon_0}{T}}+\frac{4\epsilon^2_0}{T^2}e^{-\frac{2\epsilon_0}{T}}\bigg].
\end{multline*}
Also, we deduce that
\begin{multline*}\langle |v-u|^2+2\epsilon_0 \delta_{j,2}, \mathcal{R}_p(F_{eq},G)\rangle\\=\langle 2\epsilon_0 \delta_{j,2}, \mathcal{R}_p(F_{eq},G)\rangle
=-2\epsilon_0 \int_\rth dv\int_{\mathbb{S}^2}dn\ [\Ftw (1+G)-\Fon G]\\
=2\epsilon_0\rho\int_{\mathbb{S}^2}dn\ [G(1-e^{-\frac{2\epsilon_0}{T}})-e^{-\frac{2\epsilon_0}{T}}].\end{multline*}

Summarizing, we obtain the following table of products:
\begin{equation}
    \begin{split}
    \left\langle 1,\frac{\partial F_{eq}}{\partial \rho} D_v\rho\right\rangle&=D_u\rho\left[1+e^{-\frac{2\epsilon_0}{T}}\right],\\ \sum_{i=1}^3\left\langle 1,\frac{\partial F_{eq}}{\partial u_{i}} D_vu_i\right\rangle&=\left[1+e^{-\frac{2\epsilon_0}{T}}\right](\nabla\cdot u)\rho ,\\
    \left\langle 1,\frac{\partial F_{eq}}{\partial T} D_vT\right\rangle&=2\epsilon_0\rho (D_u T)\frac{e^{-\frac{2\epsilon_0}{T}}}{T^2},\\\left\langle 1,\mathcal{R}_p[F_{eq},G]\right\rangle&=0,\\
      \left\langle (v-u),\frac{\partial F_{eq}}{\partial \rho} D_v \rho\right\rangle&=\frac{ T}{ 2} \nabla\rho\left[1+e^{-\frac{2\epsilon_0}{T}}\right],\\
      \left\langle (v-u),\sum_{i=1}^3\frac{\partial F_{eq}}{\partial u_{i}} D_v u_i\right\rangle&=\rho D_u u \left[1+e^{-\frac{2\epsilon_0}{T}}\right],\\
      \left\langle (v-u),\frac{\partial F_{eq}}{\partial T} D_v T\right\rangle&=\nabla T\left[(1+e^{-\frac{2\epsilon_0}{T}})\frac{\rho}{2}+\frac{\epsilon_0\rho}{T}e^{-\frac{2\epsilon_0}{T}}\right],\\\left\langle (v-u),\mathcal{R}_p[F_{eq},G]\right\rangle&=0,\\
    \left\langle |v-u|^2+2\epsilon_0\delta_{j,2},\frac{\partial F_{eq}}{\partial \rho} D_v \rho\right\rangle&=\frac{ 3T}{ 2} (D_u \rho)\left[1+e^{-\frac{2\epsilon_0}{T}}\right]
+2\epsilon_0(D_u \rho) e^{-\frac{2\epsilon_0}{T}},\\
  \sum_{i=1}^3  \left\langle |v-u|^2+2\epsilon_0\delta_{j,2},\frac{\partial F_{eq}}{\partial u_{i}} D_v u_i\right\rangle&=\frac{5\rho T}{2}\left[1+e^{-\frac{2\epsilon_0}{T}}\right]\nabla \cdot u 
+2\epsilon_0\rho e^{-\frac{2\epsilon_0}{T}}\nabla  \cdot u, \\
   \left\langle |v-u|^2+2\epsilon_0\delta_{j,2},\frac{\partial F_{eq}}{\partial T} D_v T\right\rangle& =\rho D_u T\bigg[ \frac{3}{2}(1+e^{-\frac{2\epsilon_0}{T}})
+\frac{3\epsilon_0}{T}e^{-\frac{2\epsilon_0}{T}}+\frac{4\epsilon^2_0}{T^2}e^{-\frac{2\epsilon_0}{T}}\bigg],\\
    \langle |v-u|^2+2\epsilon_0 \delta_{j,2}, \mathcal{R}_p(F_{eq},G)\rangle&=2\epsilon_0\rho\int_{\mathbb{S}^2}dn\   [G(1-e^{-\frac{2\epsilon_0}{T}})-e^{-\frac{2\epsilon_0}{T}}],
    \end{split}
\end{equation}where we use the convective notation $D_v\eqdef \partial_t + v\cdot \nabla$ and $D_u\eqdef \partial_t + u\cdot \nabla$ here. 
Then by \eqref{macro eq 1}-\eqref{macro eq 3}, we obtain the following system of macroscopic equations:
\begin{multline}\label{macroscopic proto}
    D_u \rho\left[1+e^{-\frac{2\epsilon_0}{T}}\right]+\left[1+e^{-\frac{2\epsilon_0}{T}}\right]\nabla\cdot  u\rho+2\epsilon_0\rho (D_u T)\frac{e^{-\frac{2\epsilon_0}{T}}}{T^2}=0,\\
  \frac{ T}{ 2} \nabla\rho\left[1+e^{-\frac{2\epsilon_0}{T}}\right]+\rho D_u u\left[1+e^{-\frac{2\epsilon_0}{T}}\right]+  \nabla T\left[(1+e^{-\frac{2\epsilon_0}{T}})\frac{\rho}{2}+\frac{\epsilon_0\rho}{T}e^{-\frac{2\epsilon_0}{T}}\right]=0,\\
 \frac{ 3T}{ 2} D_u \rho\left[1+e^{-\frac{2\epsilon_0}{T}}\right]
+2\epsilon_0(D_u\rho) e^{-\frac{2\epsilon_0}{T}}+  \frac{5\rho T}{2} \left[1+e^{-\frac{2\epsilon_0}{T}}\right]\nabla\cdot u 
\\+2\epsilon_0\rho e^{-\frac{2\epsilon_0}{T}}\nabla\cdot u+   \rho D_u T\bigg[ \frac{3}{2}(1+e^{-\frac{2\epsilon_0}{T}})
+\frac{3\epsilon_0}{T}e^{-\frac{2\epsilon_0}{T}}+\frac{4\epsilon^2_0}{T^2}e^{-\frac{2\epsilon_0}{T}}\bigg]\\=     2\epsilon_0\rho\int_{\mathbb{S}^2}dn\ [G(1-e^{-\frac{2\epsilon_0}{T}})-e^{-\frac{2\epsilon_0}{T}}].
    \end{multline}
    
    The first equation \eqref{macroscopic proto}$_1$ can be rewritten as a continuity equation for the total molecule density:
    $$\partial_t\tilde{\rho}+\nabla\cdot \left(\tilde{\rho} u\right)=0,$$where $\tilde{\rho}\eqdef\rho\left[1+e^{-\frac{2\epsilon_0}{T}}\right]$ is the total molecule density. 
    
    The second equation \eqref{macroscopic proto}$_2$ is the equation for the total momentum: i.e., the Euler equation. Note that the total momentum is $\tilde{\rho} u.$ Then we reduce the second equation to get
    $$\tilde{\rho}D_u u = -\frac{1}{2}\nabla (\tilde{\rho}T)=-\nabla p,$$where $p\eqdef \frac{1}{2}\tilde{\rho}T$ is the pressure of the gas following the usual ideal gas law with the Boltzmann constant $k_B=\frac{1}{2}.$ Then the second equation is just the standard Euler equation for the fluid. 
    
    The presence of the radiation modifies the temperature equation as follows. We expect that the equation should be the equation for the transport of the energy with a source term induced by the radiation. Here we simplify the equation \eqref{macroscopic proto}$_3$ for the energy. Firstly, we note that
    $$\frac{3T}{2}D_u \tilde{\rho}=\frac{ 3T}{ 2} D_u\rho\left[1+e^{-\frac{2\epsilon_0}{T}}\right]+\frac{3\epsilon_0}{T}e^{-\frac{2\epsilon_0}{T}}\rho D_u T.$$ Therefore, the left-hand side of \eqref{macroscopic proto}$_3$ is equal to 
    \begin{equation*} \frac{3}{2}\partial_t(T\tilde{\rho})+\frac{3}{2}\nabla\cdot ( uT\tilde{\rho})+(\nabla\cdot u)\tilde{\rho}T+2\epsilon_0e^{-\frac{2\epsilon_0}{T}}D_u \rho +2\epsilon_0\rho e^{-\frac{2\epsilon_0}{T}}\nabla\cdot u
    +\frac{4\epsilon^2_0}{T^2}e^{-\frac{2\epsilon_0}{T}}\rho D_u T.\end{equation*}
 Here further note that
    \begin{equation*}2\epsilon_0  e^{-\frac{2\epsilon_0}{T}}D_u\rho+2\epsilon_0\rho e^{-\frac{2\epsilon_0}{T}}\nabla\cdot u
    +\rho \frac{4\epsilon^2_0}{T^2}e^{-\frac{2\epsilon_0}{T}}D_u T
    = 2\epsilon_0\left(\partial_t(\rho e^{-\frac{2\epsilon_0}{T}})+\nabla \cdot\left(u\rho e^{-\frac{2\epsilon_0}{T}}\right)\right).\end{equation*}   Therefore, we finally obtain that
    \begin{multline*} \frac{3}{2}\partial_t(T\tilde{\rho})+\frac{3}{2}\nabla\cdot ( uT\tilde{\rho})+2p(\nabla\cdot u)+2\epsilon_0\left(\partial_t(\rho e^{-\frac{2\epsilon_0}{T}})+\nabla \cdot\left(u\rho e^{-\frac{2\epsilon_0}{T}}\right)\right)\\
    =2\epsilon_0\rho\int_{\mathbb{S}^2}dn\ [G(1-e^{-\frac{2\epsilon_0}{T}})-e^{-\frac{2\epsilon_0}{T}}],\end{multline*}
    where we recall that the pressure $p$ is defined as $p=\frac{1}{2}\tilde{\rho}T$. Recall that we assume the Boltzmann constant to be $\frac{1}{2}$ throughout the paper. The energy density $e(T)$ is given by 
    \begin{equation}\label{total energy density}e=e(T)\eqdef \frac{3}{4}T+\epsilon_0\frac{ e^{-\frac{2\epsilon_0}{T}}}{1+e^{-\frac{2\epsilon_0}{T}}}.\end{equation} Then the energy equation becomes
    \begin{equation}
       \partial_t(\tilde{\rho}e)+ \nabla\cdot (\tilde{\rho}u e)+p\nabla\cdot u = \epsilon_0\rho\int_{\mathbb{S}^2}dn\ [G(1-e^{-\frac{2\epsilon_0}{T}})-e^{-\frac{2\epsilon_0}{T}}].
    \end{equation}
    
   Thus, we have obtained the following Euler system:
    \begin{equation}\label{timedep Euler}
        \begin{split}
      \frac{\partial \tilde{\rho}}{\partial t}+  \nabla\cdot \left(\tilde{\rho} u\right)&=0,    \\
    \frac{\partial u}{\partial t}+ (u\cdot \nabla)u +\frac{\nabla p}{\tilde{\rho}}&=0,\\
            \frac{\partial (\tilde{\rho}e)}{\partial t}+\nabla\cdot (\tilde{\rho}u e)+p\nabla\cdot u &= \epsilon_0\rho\int_{\mathbb{S}^2}dn\ [G(1-e^{-\frac{2\epsilon_0}{T}})-e^{-\frac{2\epsilon_0}{T}}],
        \end{split}
    \end{equation}coupled with the radiation equation 
    \begin{equation}
        \label{X2}
        n\cdot \nabla_y G=\epsilon_0 \rho \left[e^{-\frac{2\epsilon_0}{T}}(1+G)-G\right] ,
    \end{equation}
   which is obtained by integrating the Maxwellians in \eqref{stationary equations}$_2$. We can easily see that the third equation combined with the first is equivalent to
    $$ \frac{\partial e}{\partial t}+u\cdot \nabla e +\frac{p}{\tilde{\rho}}\nabla\cdot u =\epsilon_0 \int_{\mathbb{S}^2}dn\ \left[\frac{1-e^{-\frac{2\epsilon_0}{T}}}{1+e^{-\frac{2\epsilon_0}{T}}}G-\frac{e^{-\frac{2\epsilon_0}{T}}}{1+e^{-\frac{2\epsilon_0}{T}}}\right].$$The right-hand side of this equation is the heating (or cooling) term due to the radiation. 
  
  In \cite{WANG1,LZ1, LZ2}, the authors obtain well-posedness results for Euler-like and Navier-Stokes-Fourier-like systems for a compressible gas coupled with radiation. In the models that they consider, the equations for energy and momentum are reformulated including in them the momentum and the energy due to the radiation. Analogous systems have also been introduced in \cite{BUET}, where the thermodynamic consistency of the resulting equation has been studied. In addition, Chapman-Enskog approximations under the assumption that the radiation distribution is close to equilibrium at each point and it changes slowly have also been obtained.

\subsection{Equation for the entropy}We define the entropy density $s$ by means of
\begin{multline}
    \label{entropy}
   \tilde{\rho}s=-\sum_{j=1}^2 \int_\rth dv\ \log(F_{eq}^{(j)} )F_{eq}^{(j)}\\
   =-\tilde{\rho}\left(\log\left(\frac{\tilde{\rho}}{T^{3/2}(1+e^{\frac{-2\epsilon_0}{T}})}\right)+\frac{2\epsilon_0}{T}\frac{e^{\frac{-2\epsilon_0}{T}}}{T^{3/2}(1+e^{\frac{-2\epsilon_0}{T}})}-\log(c_0)+\frac{3}{2}\right).
\end{multline}
Define $$\lambda (T)\eqdef-\left(-\log\left(T^{3/2}(1+e^{\frac{-2\epsilon_0}{T}})\right)+\frac{2\epsilon_0}{T}\frac{e^{\frac{-2\epsilon_0}{T}}}{T^{3/2}(1+e^{\frac{-2\epsilon_0}{T}})}-\log(c_0)+\frac{3}{2}\right) ,$$ such that 
$$s=-\log(\tilde{\rho}) +\lambda (T).$$ 
Then we observe that
\begin{equation*}
    \partial_t s +(u\cdot \nabla)s= -\frac{(\partial_t+u\cdot\nabla) \tilde{\rho}}{\tilde{\rho}}+\lambda'(T)(\partial_t+u\cdot\nabla) T =\nabla\cdot u +\frac{\lambda'(T)}{e'(T)}\left[-\frac{p}{\tilde{\rho}}\nabla\cdot u+\frac{1}{\tilde{\rho}}Q[G]\right],
\end{equation*}using \eqref{timedep Euler}. Then an elementary, but tedious computation, yields
$$T\lambda'(T)=2e'(T).$$
Therefore, we have
\begin{equation}\label{entropy eq}
    \partial_t s +(u\cdot \nabla)s =\nabla\cdot u +\frac{2}{T}\left[-\frac{p}{\tilde{\rho}}\nabla\cdot u+\frac{1}{\tilde{\rho}}Q[G]\right]=\frac{2}{T\tilde{\rho}}Q[G]=\frac{1}{p}Q[G].
\end{equation} This is the equation for the entropy. We observe that the entropy is not conserved due to the radiation. Using \eqref{entropy eq} and \eqref{timedep Euler} we can also obtain
$$\partial_t(\tilde{\rho}s)+\nabla \cdot (\tilde{\rho}us)=\frac{2}{T}Q[G].$$ 

\begin{remark}
For ideal gases without the radiation taken into account, the entropy density $s$ satisfies 
$e^s=p\rho^{-\gamma},\ T=\frac{p}{\rho},\ \frac{Ds}{Dt}=\partial_ts+v\cdot\nabla s=0$ for some $\gamma$.
This is from $\partial_t(\rho s)=-\nabla\cdot (\rho u s).$ In our case, the entropy equation contains a contribution of the heat transfer due to the radiation.  
\end{remark}

This completes the formal derivation of the system of Euler equations from the reference kinetic equations for gas molecules and radiation in the LTE situation. In the next section, we also introduce the derivation of Euler equations in a non-LTE situation.

\section{Stationary solutions to the Euler-like system in the LTE situation}\label{sec.LTEexist}In this section, we study the behavior of solutions to the Euler system \eqref{timedep Euler} coupled with the equation for radiation from the hydrodynamic limit. Our goal is to prove the existence of stationary solutions to \eqref{timedep Euler} under several different scaling situations. We recall that the rescaled radiation intensity $G$ satisfies
\begin{equation}
    \notag
    n\cdot \nabla_y G =  \epsilon_0 \int_\rth dv\ [\Ftw (1+G)-\Fon G].
\end{equation}
Here, we use the Chapman-Enskog expansion \eqref{Chapman} and let $\alpha \to 0$ to obtain the leading order expansion. Then we have (c.f. \cite{rutten1995radiative}) 
\begin{equation}
    \label{rad eq}
    n\cdot \nabla_y G=\epsilon_0 \rho\left[e^{-\frac{2\epsilon_0}{T}} (1+G)-G\right]. 
\end{equation}Thus, an equilibrium solution is the pseudo-Planck distribution (Planck for a single frequency):
\begin{equation}\label{pseudo planck}G_{eq}=\frac{e^{-\frac{2\epsilon_0}{T}}}{1-e^{-\frac{2\epsilon_0}{T}}}.\end{equation}

\subsection{Steady states}We consider in this paper steady states in which the fluid is at rest; i.e., $u=0.$ Note that assuming specular reflection boundary conditions for the Boltzmann equations $F(t,x,v)=F(t,x,v-2(n_x\cdot v)n_x)$ for an outward normal vector $n_x$ at $x\in \partial\Omega$, we would obtain the boundary condition $u\cdot n=0$ for the Euler-like system.  Then the Euler-like system for the gas reduces to the equation 
\begin{equation}
    \label{X1}
    p=p_0=(\text{constant}),
\end{equation}
where $p=\frac{1}{2}\tilde{\rho}T$. Then \eqref{X1} yields a relation between $\tilde{\rho}$ and $T$, namely
\begin{equation}
    \label{C1}
    \tilde{\rho}T=c_0
\end{equation} for some free parameter $c_0>0.$ The equation for the energy \eqref{timedep Euler}$_3$ in the steady case with $u=0$ reduces to 
$$\int_{\mathbb{S}^2}dn\ [G(1-e^{-\frac{2\epsilon_0}{T}})-e^{-\frac{2\epsilon_0}{T}}]
=0.$$ Then, integrating in $\mathbb{S}^2$ in the equation for the radiation \eqref{X2}, we obtain \begin{equation}
    \label{R1}
    \nabla\cdot J_R = 0,
\end{equation}where $J_R=\int_{\mathbb{S}^2}nGdn.$ We will discuss in this paper several situations in which we can determine the temperature $T$ at each point using \eqref{X2} and \eqref{R1}. Using then \eqref{C1}, it is possible to obtain the gas density at each point. There is a free parameter $c_0$ that can be determined using the total amount of gas contained in the system, namely $\int_{\Omega}\tilde{\rho}dx.$  In order to determine the temperature $T$, we need to impose suitable boundary conditions for \eqref{X2}. In this paper, we will consider only the boundary condition in which the incoming radiation towards the container where the gas is included is given. 
Moreover, we will consider only situations in which the domain $\Omega$ containing the gas is convex and the boundary $\partial\Omega $ is $C^1$.

We begin analyzing a problem that can be reduced to a one-dimensional situation; namely a gas is contained in a slab between two parallel planes and the radiation intensity depends only on the azimuthal angle with respect to the vector normal to the planes.

\subsubsection{In a slab geometry}\label{sec.slab1}
We can first look into a physically interesting situation where the gas is contained between the two parallel planes $y_1=0$ and $y_1=L$, and $G$ does not depend on $y_2,y_3$ and it depends only on the angle $\theta$ between $n$ and the vector $(1,0,0)$ such that $G=G(y_1,\theta).$ In this case, we have 
$$\cos\theta \frac{\partial G}{\partial y_1} =\epsilon_0 \rho [e^{-\frac{2\epsilon_0}{T}}(1+G)-G].$$ We impose the following boundary conditions in which the incoming radiation is given by
\begin{equation}\label{eq.rad.boundarycondition}
    G(0,\theta,t)=I_0(\theta)\text{ if }\theta <\frac{\pi}{2},\text { and }G(L,\theta,t)=0\text{ if }\theta> \frac{\pi}{2},
\end{equation}and we have no incoming radiation on $y_1=L$.  Then together with the boundary condition \eqref{eq.rad.boundarycondition}, we can explicitly solve the ODE and obtain that
\begin{multline*}
    G(y_1,\theta)=I_0(\theta)\exp\left(-\frac{\epsilon_0}{\cos\theta}\int_0^{y_1} \rho(\xi)(1-e^{-\frac{2\epsilon_0}{T(\xi)}})d\xi\right) \\
    +\int_0^{y_1}\frac{\epsilon_0\rho(\xi)}{\cos\theta}e^{-\frac{2\epsilon_0}{T(\xi)}} \exp\left(-\frac{\epsilon_0}{\cos\theta}\int_\xi^{y_1} \rho(\eta)(1-e^{-\frac{2\epsilon_0}{T(\eta)}})d\eta\right)d\xi,
\end{multline*}if $\theta<\frac{\pi}{2}$ and
$$
    G({y_1},\theta)=\int_{y_1}^L\frac{\epsilon_0\rho(\xi)}{|\cos\theta|}e^{-\frac{2\epsilon_0}{T(\xi)}} \exp\left(-\frac{\epsilon_0}{\cos\theta}\int_\xi^{y_1} \rho(\eta)(1-e^{-\frac{2\epsilon_0}{T(\eta)}})d\eta\right)d\xi,
$$if $\theta>\frac{\pi}{2}$.
Then for the stationary solutions, we can consider the linearization around a constant $\rho_0$ and $T_0$ such that 
$p=\frac{1}{2}\tilde{\rho}T$ is constant, $u=0$, and $J_R$ is constant. Here free constants that we can control are $\rho_0,T_0,$ and $I_0$.

\subsection{Constant stationary solutions}
\label{sec.constant solutions} We will obtain solvability condition for \ref{timedep Euler}, \eqref{rad eq}, and \eqref{generic.rad.boundarycondition} in cases in which the absorption ratio is near constant. To this end we first consider particular incoming radiation distributions yielding constant temperature at the gas. 
In particular, we are interested in the constant stationary solutions to the system \eqref{timedep Euler} and \eqref{rad eq}. Then later we will also consider the linearization of the model nearby these constant steady-states. In this section, for the sake of simplicity, we will only consider the 1-dimensional slab case where the functions are independent of $y_2$ and $y_3$ variables as in Section \ref{sec.slab1}. We remark that more general 3-dimensional case can be treated similarly.  Here we impose generic incoming boundary conditions for the radiation we take the following boundary conditions in which the radiation enters and it escapes:
\begin{equation}\label{generic.rad.boundarycondition}
    G(0,\theta,t)=a_+(\theta)\text{ if }\theta <\frac{\pi}{2},\text { and }G(L,\theta,t)=a_-(\theta)\text{ if }\theta> \frac{\pi}{2},
\end{equation}  where $a_\pm$ are given.
In this section, we will examine conditions to have solutions with $u=0,$ constant $T$, and constant $\rho$ (or equivalently $\tilde{\rho}$).  We will show that we can have constant solutions under a particular choice of boundary conditions for the radiation $G$; namely, if we assume the incoming boundary condition for the radiation with the incoming profile equal to the Planck distribution, then we can have constant solutions. More precisely, we obtain the following proposition.
\begin{proposition}\label{prop.constant.sol}
The system \eqref{timedep Euler}, \eqref{rad eq}, and \eqref{generic.rad.boundarycondition} can have a constant stationary solution $(\tilde{\rho}_0,0,T_0,G_0)$ if the boundary condition \eqref{generic.rad.boundarycondition} is given by the constant Planck distribution ratio $$a_+(\theta)=a_-(\pi-\theta)=\frac{e^{-\frac{2\epsilon_0}{T_0}}}{1-e^{-\frac{2\epsilon_0}{T_0}}}.$$\end{proposition}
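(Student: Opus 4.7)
The plan is a direct verification by substitution. I propose the explicit ansatz $\tilde{\rho} \equiv \tilde{\rho}_0$, $u \equiv 0$, $T \equiv T_0$, and $G \equiv G_0 := e^{-2\epsilon_0/T_0}/(1 - e^{-2\epsilon_0/T_0})$ (i.e., $G_0$ is the pseudo-Planck distribution \eqref{pseudo planck} at temperature $T_0$), and I would check that each of the equations \eqref{timedep Euler}, \eqref{rad eq}, together with the boundary condition \eqref{generic.rad.boundarycondition}, is satisfied by this quadruple.

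First, I would verify the Euler system \eqref{timedep Euler}. Since $\tilde{\rho}$ is constant and $u = 0$, the continuity equation holds trivially; the momentum equation reduces to $\nabla p/\tilde{\rho} = 0$, which holds because $p = \tilde{\rho}_0 T_0/2$ is constant; and the energy equation reduces to the pointwise identity
\begin{equation*}
\int_{\stw} \bigl[ G_0(1 - e^{-2\epsilon_0/T_0}) - e^{-2\epsilon_0/T_0}\bigr]\, dn = 0.
\end{equation*}
Next, for the radiation equation \eqref{rad eq}, the transport term $n \cdot \nabla_y G_0$ vanishes since $G_0$ is spatially constant, while the right-hand side $\epsilon_0\tilde{\rho}_0 \bigl[e^{-2\epsilon_0/T_0}(1+G_0) - G_0\bigr]$ also vanishes by the defining identity of the pseudo-Planck distribution,
\begin{equation*}
e^{-2\epsilon_0/T_0}(1+G_0) = G_0,
\end{equation*}
which is precisely the detailed balance relation for the absorption/emission operator. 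Substituting this identity into the integrand of the energy source shows that it too is identically zero, so the remaining equation in \eqref{timedep Euler} is satisfied.

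Finally, the boundary condition \eqref{generic.rad.boundarycondition} is checked by noting that since $G \equiv G_0$ throughout the slab, its traces at $y_1 = 0$ and $y_1 = L$ are both equal to $G_0$, which agrees with $a_+(\theta) = a_-(\pi-\theta) = G_0$ by hypothesis. There is no genuine obstacle in the argument; the content of the proposition is the identification of the Planck profile as the incoming radiation distribution compatible with a constant zero-velocity interior state, and the only input needed is the algebraic detailed-balance identity $e^{-2\epsilon_0/T_0}(1+G_0) = G_0$ satisfied by the pseudo-Planck distribution.
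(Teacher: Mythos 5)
Your proof is correct, but it takes a genuinely different and more economical route than the paper's. You verify the constant ansatz $(\tilde{\rho}_0,0,T_0,G_0)$ directly: the continuity and momentum equations are trivial, and both the radiation equation and the energy source reduce to the single detailed-balance identity $e^{-2\epsilon_0/T_0}(1+G_0)=G_0$, which is equivalent to $G_0(1-e^{-2\epsilon_0/T_0})=e^{-2\epsilon_0/T_0}$; the boundary conditions are then matched because the constant $G_0$ has the prescribed traces. The paper instead solves the radiation transport ODE explicitly for \emph{arbitrary} incoming data $a_\pm$ at constant $\rho_0,T_0$ (formulas \eqref{constantcase1}--\eqref{constantcase2}), derives from the energy and radiation equations that the flux $J(y_1)=\int_{\stw}G\cos\theta\,dn$ must be constant, decomposes $J=J_1+J_2$, and identifies the Planck profile as the simplest boundary data making $J_1+J_2$ constant (indeed zero), only then observing that $G\equiv G_0$ is a particular solution. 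What the paper's longer route buys is a partial explanation of \emph{why} the Planck profile is the natural candidate among all $a_\pm$ (it is forced by the flux-constancy constraint within this family of solutions), which connects to the uniqueness question raised in the remark following the proposition; what your route buys is brevity and a cleaner logical structure for the sufficiency statement actually being claimed. Since the proposition only asserts existence under Planck boundary data, your direct substitution is a complete proof.
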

\begin{proof}
Let $\rho=\rho_0$ and $T=T_0$ be constant. For stationary solutions with $u=0$, the energy equation \eqref{timedep Euler}$_3$ becomes \begin{equation}
    \label{energy.eq.constant section}\epsilon_0\rho\int_{\mathbb{S}^2}dn\ [G(1-e^{-\frac{2\epsilon_0}{T}})-e^{-\frac{2\epsilon_0}{T}}]=0.
\end{equation} 
Also, the radiation equation \eqref{rad eq} becomes \begin{equation}
    \label{radiation.eq.constant section}n\cdot \nabla_y G=\cos\theta \frac{\partial G}{\partial y_1}=\epsilon_0 \rho\left[e^{-\frac{2\epsilon_0}{T}} (1+G)-G\right].\end{equation}
Solving \eqref{radiation.eq.constant section} with the boundary conditions \eqref{generic.rad.boundarycondition}, we have 
\begin{multline}\label{constantcase1}
    G(y_1,\theta)=a_+(\theta)\exp\left(-\frac{\epsilon_0\rho}{\cos\theta} (1-e^{-\frac{2\epsilon_0}{T}})y_1\right) 
   +\int_0^{y_1}\frac{\epsilon_0\rho}{\cos\theta}e^{-\frac{2\epsilon_0}{T}} \exp\left(-\frac{\epsilon_0\rho}{\cos\theta}(y_1-\xi) (1-e^{-\frac{2\epsilon_0}{T}})\right)d\xi,
\end{multline}if $\theta<\frac{\pi}{2}$ and
\begin{multline}\label{constantcase2}
    G({y_1},\theta)=a_-(\theta)\exp\left(-\frac{\epsilon_0\rho}{\cos\theta} (1-e^{-\frac{2\epsilon_0}{T}})(y_1-L)\right) \\-\int_{y_1}^L\frac{\epsilon_0\rho}{\cos\theta}e^{-\frac{2\epsilon_0}{T}} \exp\left(-\frac{\epsilon_0\rho}{\cos\theta} (y_1-\xi)(1-e^{-\frac{2\epsilon_0}{T}})\right)d\xi,
\end{multline}if $\theta>\frac{\pi}{2}$.
On the other hand, \eqref{energy.eq.constant section}-\eqref{radiation.eq.constant section} together imply
$$\partial_{y_1} \int_{\mathbb{S}^2}dn \ G\cos\theta = 0,$$ and hence $J(y_1)\eqdef \int_{\mathbb{S}^2}dn  G\cos\theta $ must be constant. Using \eqref{constantcase1}-\eqref{constantcase2}, we have
\begin{multline*}
    J(y_1)
   =2\pi \int _0^{\frac{\pi}{2}}d\theta\ \cos\theta\sin\theta \bigg(a_+(\theta)\exp\left(-\frac{\epsilon_0\rho}{\cos\theta} (1-e^{-\frac{2\epsilon_0}{T}})y_1\right)\bigg) \\
   +2\pi \int _0^{\frac{\pi}{2}}d\theta\ \cos\theta\sin\theta \bigg(\int_0^{y_1}\frac{\epsilon_0\rho}{\cos\theta}e^{-\frac{2\epsilon_0}{T}} \exp\left(-\frac{\epsilon_0\rho}{\cos\theta}(y_1-\xi) (1-e^{-\frac{2\epsilon_0}{T}})\right)d\xi\bigg)\\
   -2\pi \int _0^{\frac{\pi}{2}}d\theta\ \cos\theta\sin\theta \bigg(a_-(\pi-\theta)\exp\left(\frac{\epsilon_0\rho}{\cos\theta} (1-e^{-\frac{2\epsilon_0}{T}})(y_1-L)\right)\bigg) \\+2\pi \int _0^{\frac{\pi}{2}}d\theta\ \cos\theta\sin\theta \bigg(\int_{y_1}^L\frac{\epsilon_0\rho}{|\cos\theta|}e^{-\frac{2\epsilon_0}{T}} \exp\left(\frac{\epsilon_0\rho}{\cos\theta} (y_1-\xi)(1-e^{-\frac{2\epsilon_0}{T}})\right)d\xi\bigg),
\end{multline*}
by the change of variables $\theta\mapsto \pi-\theta$. Then we can write $J(y_1)=J_1(y_1)+J_2(y_1)$ where we define $J_1$ and $J_2$ as
\begin{multline}
    \label{J1}
    J_1(y_1)\eqdef 2\pi \int _0^{\frac{\pi}{2}}d\theta\ \cos\theta\sin\theta \bigg(a_+(\theta)\exp\left(-\frac{\epsilon_0\rho}{\cos\theta} (1-e^{-\frac{2\epsilon_0}{T}})y_1\right)\\
    -a_-(\pi-\theta)\exp\left(\frac{\epsilon_0\rho}{\cos\theta} (1-e^{-\frac{2\epsilon_0}{T}})(y_1-L)\right)\bigg),
\end{multline}
and
\begin{multline}
    \label{J2}
    J_2(y_1)\eqdef 2\pi \int _0^{\frac{\pi}{2}}d\theta\ \cos\theta\sin\theta \bigg(\int_0^{y_1}\frac{\epsilon_0\rho}{\cos\theta}e^{-\frac{2\epsilon_0}{T}} \exp\left(-\frac{\epsilon_0\rho}{\cos\theta}(y_1-\xi) (1-e^{-\frac{2\epsilon_0}{T}})\right)d\xi\bigg)\\+2\pi \int _0^{\frac{\pi}{2}}d\theta\ \cos\theta\sin\theta \bigg(\int_{y_1}^L\frac{\epsilon_0\rho}{|\cos\theta|}e^{-\frac{2\epsilon_0}{T}} \exp\left(\frac{\epsilon_0\rho}{\cos\theta} (y_1-\xi)(1-e^{-\frac{2\epsilon_0}{T}})\right)d\xi\bigg).
    \end{multline}
    Then we further note that 
    \begin{multline}\label{J2new}
        J_2(y_1)
        =2\pi \int _0^{\frac{\pi}{2}}d\theta\ \cos\theta\sin\theta\frac{e^{-\frac{2\epsilon_0}{T}}}{1-e^{-\frac{2\epsilon_0}{T}}}\\\times  \bigg[\exp\left(\frac{\epsilon_0\rho}{\cos\theta}(1-e^{-\frac{2\epsilon_0}{T}})(y_1-L) \right)-\exp\left(-\frac{\epsilon_0\rho}{\cos\theta}(1-e^{-\frac{2\epsilon_0}{T}})y_1 \right)\bigg].
    \end{multline}
    In order to examine the appropriate boundary conditions $a_\pm$, we want them to make $J$ be constant. In other words, $J_1+J_2$ must be constant for a given set of boundary conditions $a_+$, $a_-$. The simplest choice of $a_\pm$ yielding $J$ constant is 
$$a_+(\theta)=a_-(\pi-\theta)=\frac{e^{-\frac{2\epsilon_0}{T}}}{1-e^{-\frac{2\epsilon_0}{T}}},$$ which means that the incoming radiation is given by the pseudo-Planck equilibrium distributions.  In this way, we can easily see from \eqref{J1} and \eqref{J2new} that $$J_1+J_2=0.$$ Also, we can see that $G=G_0=\frac{e^{-\frac{2\epsilon_0}{T}}}{1-e^{-\frac{2\epsilon_0}{T}}}$ becomes a particular solution for the system under the boundary conditions. This completes the proof.
\end{proof}
\begin{remark}
One open question is to prove that this solution above is the only solution for $T$ constant.
\end{remark}
\subsection{Linearization near constant steady states}
\label{sec.linearized}
We can now consider the linearization of the system near the constant steady-state $(\tilde{\rho}, u, T)=(\tilde{\rho}_0,0,T_0)$ that we introduced in Section \ref{sec.constant solutions}. We first let $\tilde{\rho}=\tilde{\rho}_0(1+\zeta),$ $T=T_0(1+\theta)$ for $|\zeta|+|\theta|+|u|\ll1 $ such that  $\frac{2\epsilon_0}{T_0}|\theta|\ll 1.$   We also linearize the equation \eqref{rad eq} as $G=G_0(1+h)$ for the radiation $G$ near $G_0\eqdef \frac{e^{-\frac{2\epsilon_0}{T_0}}}{1-e^{-\frac{2\epsilon_0}{T_0}}}$ such that $e^{-\frac{2\epsilon_0}{T_0}}(1+G_0)-G_0=0.$
Then the system \eqref{timedep Euler} coupled with \eqref{rad eq} with $c= \infty$ yields that the perturbations $\zeta$, $\theta$, and $h$ satisfy the following system of equations:
\begin{equation}\label{linearized equations}
    \begin{split}
        \frac{\partial \zeta}{\partial t} +\nabla_y \cdot u&=0,\\
         \frac{\partial u}{\partial t} +\frac{T_0}{2}\nabla_y (\zeta+\theta)&=0,\\
         \lambda_0\epsilon_0\frac{\partial \theta}{\partial t}+\frac{p_0}{\tilde{\rho}_0}\nabla_y \cdot u &=\frac{\epsilon_0 G_0}{1+e^{-\frac{2\epsilon_0}{T_0}}}\int_{\mathbb{S}^2}dn\ \left[\frac{h}{1+G_0}-\frac{2\epsilon_0}{T_0}\theta \right]\\
         n\cdot \nabla _y h &= \frac{\epsilon_0 \tilde{\rho}_0 G_0}{1+e^{-\frac{2\epsilon_0}{T_0}}}\left[\frac{2\epsilon_0}{T_0}\theta -\frac{h}{1+G_0}\right],
    \end{split}
\end{equation}
where $\lambda_0 \eqdef \frac{2\epsilon_0}{T_0}\frac{e^{-\frac{2\epsilon_0}{T_0}}}{\left(1+e^{-\frac{2\epsilon_0}{T_0}}\right)^2}$ and $p_0=\tilde{\rho}_0T_0.$
 Using \eqref{linearized equations} we can also obtain that
\begin{equation}\begin{split}
    \label{linearized system2}
    \frac{\partial^2\zeta}{\partial t^2}&=\frac{T_0}{2}\Delta_y (\zeta+\theta),\\
    \frac{\partial}{\partial t}(\lambda_0\epsilon_0\theta -\frac{p_0}{\tilde{\rho}_0}\zeta)&=\frac{1}{\tilde{\rho}_0}\nabla_y\cdot \int_{\mathbb{S}^2}nh dn.
\end{split}
\end{equation}

One of the physical boundary conditions for the radiation $h$ is the absorbing boundary condition: $h(0,n)=0, \text{ for }n_1>0,$ and $h(L,n)=0,$ for $n_1<0$. Then the energy is lost only through the escape of radiation through the boundaries.  If we consider the specular reflection boundary for the velocity distribution $F$ for the molecules, then this corresponds to $u\cdot n=0$ at the boundaries. Then using \eqref{linearized equations}$_2$ we can have
\begin{equation}\label{boundary cond for lin pro}u\cdot n=0\text{  and  } \frac{\partial}{\partial n}(\zeta+\theta)=0,\end{equation}at the boundaries $y_1=0$ or $L.$


\subsection{Existence of steady-states to the linearized problem}\label{sec.linearized existence}
In this section, we study the steady-states to the linearized problem. We study the stationary solutions of the system \eqref{linearized equations}-\eqref{linearized system2} with the boundary conditions \eqref{boundary cond for lin pro}. We impose the ``incoming" boundary condition for $h$ as 
\begin{equation}
    \label{boundary cond for h}
    h(0,n)=j_0(n)\text{ if }n_1>0 \text {  and  }h(L,n)=0\text{ if }n_1<0.
\end{equation}
 We also assume the symmetry in $y_2$ and $y_3$ such that each perturbation depends only on $y_1$ variable. Notice that the function characterizing the gas (i.e., $T$ and $\rho$) are also part of the unknowns of the problems. \begin{remark}The existence of solutions assuming that the absorption or emission are given functions (i.e., $T$ and $\rho$ given in our setting), in the half-plane have been considered in \cite{Gupta,Chandrasekhar,Holstein}.
 \end{remark} Then our theorem states the following
 \begin{theorem}\label{existence theorem 3}
 The linearized stationary system \eqref{linearized equations}-\eqref{linearized system2} with the boundary conditions \eqref{boundary cond for lin pro} and \eqref{boundary cond for h} has a unique solution $(\zeta,\theta)$ with $u=0$. 
 \end{theorem}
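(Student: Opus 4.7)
The plan is to reduce the stationary version of \eqref{linearized equations} with $u\equiv 0$ to a one-dimensional Milne--Schwarzschild integral equation for the temperature perturbation $\theta$, and then to solve it via the Fredholm alternative combined with an energy estimate. Setting $u\equiv 0$ in the stationary form of \eqref{linearized equations}$_2$ forces $\nabla_y(\zeta+\theta)\equiv 0$, so that $\zeta+\theta$ equals a constant $C$; in particular $\zeta$ decouples from the radiation unknowns and only reappears through this one algebraic relation. The stationary energy equation \eqref{linearized equations}$_3$ becomes the self-consistency condition
\begin{equation*}
\int_{\stw} h(y_1,n)\,dn \;=\; 4\pi(1+G_0)\frac{2\epsilon_0}{T_0}\,\theta(y_1),
\end{equation*}
and the radiation equation, in the slab geometry with $\mu=n\cdot e_1$, reads
\begin{equation*}
\mu\,\partial_{y_1}h \;=\; \sigma\bigl(S(y_1)-h\bigr),\qquad \sigma\eqdef \frac{\epsilon_0\tilde\rho_0 G_0}{(1+G_0)(1+e^{-2\epsilon_0/T_0})},\quad S(y_1)\eqdef (1+G_0)\frac{2\epsilon_0}{T_0}\theta(y_1).
\end{equation*}

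Next I would integrate the transport equation along characteristics with the incoming boundary data \eqref{boundary cond for h}, exactly as in the derivation of \eqref{constantcase1}--\eqref{constantcase2}, which gives $h(y_1,\mu)$ explicitly as a linear functional of $S$ plus a boundary term depending on $j_0$. Substituting this representation into the self-consistency condition and integrating over the sphere produces a scalar Fredholm integral equation of the second kind for $S$ on $[0,L]$,
\begin{equation*}
S(y_1) \;=\; \int_0^L \mathcal K(y_1,\xi)\,S(\xi)\,d\xi \;+\; \Phi[j_0](y_1),
\end{equation*}
with a weakly-singular kernel $\mathcal K$ built from the first exponential-integral function and a right-hand side $\Phi[j_0]$ that is a continuous linear functional of the incoming intensity. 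The associated integral operator is compact on $L^2(0,L)$, so by the Fredholm alternative existence reduces to showing that $1$ is not an eigenvalue, i.e.\ that the homogeneous problem ($j_0\equiv 0$) admits only $S\equiv 0$.

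For this uniqueness step I would use the standard $H$-theorem-style estimate: testing the homogeneous transport equation $\mu\partial_{y_1}h=\sigma(S-h)$ against $h$ and integrating over $(y_1,n)\in[0,L]\times\stw$, and using the self-consistency condition, yields
\begin{equation*}
\frac{1}{2}\int_{\stw}\mu\,h^2(L,n)\,dn - \frac{1}{2}\int_{\stw}\mu\,h^2(0,n)\,dn \;=\; \int_0^L \sigma\left(4\pi S^2(y_1) - \int_{\stw}h^2\,dn\right)dy_1.
\end{equation*}
The left-hand side is nonnegative thanks to the absorbing homogeneous boundary conditions ($h(0,n)=0$ for $\mu>0$ and $h(L,n)=0$ for $\mu<0$), while the right-hand side is nonpositive by Cauchy--Schwarz on $\stw$. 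Both sides therefore vanish: equality in Cauchy--Schwarz forces $h$ to be independent of $n$ and equal to $S$, the transport equation then gives $\partial_{y_1}S\equiv 0$, and the vanishing of $h(0,\cdot)$ on $\{\mu>0\}$ pins down $S(0)=0$, so $S\equiv \theta\equiv 0$.

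Once $\theta$ is uniquely determined for any admissible $j_0$, we recover $\zeta=C-\theta$ with the additive constant $C$ fixed by the natural mass-preservation normalization $\int_0^L\zeta\,dy_1=0$, so that $(\zeta,\theta)$ is unique. The main obstacle in this plan is the coercivity estimate in the uniqueness step: the absorbing boundary condition \eqref{boundary cond for h} is essential for the boundary integral to have the correct sign, and one has to combine it with the self-consistency condition carefully so that the interior Cauchy--Schwarz deficit and the boundary term acquire compatible signs and jointly force $h\equiv 0$.
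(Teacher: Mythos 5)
Your proposal is correct, and it reaches the same one--dimensional integral-equation reduction as the paper but by a genuinely different route in two places. First, the closure condition: the paper integrates the transport equation and works with the divergence-free flux condition $\partial_{y_1}\int_{\stw} n_1 h\,dn=0$, which forces it to differentiate the characteristic representation of $h$ (introducing the distributional identities for $G$ and $Q$ and an unknown constant flux $i_0$) before a local term in $\theta$ appears; you instead impose the equivalent source-balance condition $\int_{\stw}h\,dn=4\pi(1+G_0)\tfrac{2\epsilon_0}{T_0}\theta$ coming directly from the stationary energy equation, which produces the classical Schwarzschild--Milne equation with the weakly singular exponential-integral kernel $E_1(\sigma|y_1-\xi|)$ immediately and with no free constant to eliminate. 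Second, the solvability argument: the paper observes that $\sup_{x}\int_0^LK(x-\xi)\,d\xi<1$ and concludes by the contraction mapping principle in $C([0,L])$, giving existence and uniqueness in one stroke; you invoke the Fredholm alternative (the $E_1$ kernel is indeed Hilbert--Schmidt on $L^2(0,L)$, so compactness holds) and then rule out the eigenvalue $1$ by the $H$-theorem energy identity, whose sign structure you have set up correctly: the outgoing boundary terms are nonnegative under the homogeneous absorbing data, the interior term is nonpositive by Cauchy--Schwarz since $4\pi S^2\le\int_{\stw}h^2\,dn$, and equality forces $h=S$ independent of $n$, hence $\partial_{y_1}S=0$ and $S\equiv0$ from the boundary. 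Your route is heavier than necessary here (the contraction property already holds for every finite $L$), but it is more robust --- it does not rely on the operator norm being below one and would survive generalizations where that fails --- and it avoids the paper's somewhat formal manipulations with $\delta'$. The final recovery of $\zeta=C-\theta$ with $C$ fixed by mass conservation matches the paper.
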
\begin{proof}
 From \eqref{linearized system2} we have
$$\frac{T_0}{2}\partial_{y_1}^2(\zeta+\theta)=\frac{1}{\tilde{\rho}_0}\partial_{y_1} \int_{\mathbb{S}^2}n_1h dn=0.$$  By \eqref{linearized equations}$_2$, we can also see that \begin{equation}\label{eq.zetatheta}\zeta+\theta = C_0,\end{equation} for some constant $C_0$. For $h$ we use \eqref{linearized equations}$_4$ and obtain 
$$ n_1\partial_{y_1} h = \frac{\epsilon_0  \tilde{\rho}_0 e^{\frac{-2\epsilon_0}{T_0}}}{1+e^{-\frac{2\epsilon_0}{T_0}}}\left[\frac{2\epsilon_0}{T_0}\theta(1+G_0) -h\right]=\frac{1}{l_0}\left[\alpha_0\theta -h\right],$$ where $\frac{1}{l_0}\eqdef \frac{\epsilon_0  \tilde{\rho}_0 e^{\frac{-2\epsilon_0}{T_0}}}{1+e^{-\frac{2\epsilon_0}{T_0}}}$ and $\alpha_0 \eqdef \frac{2\epsilon_0}{T_0}(1+G_0).$
Then by solving this equation with the boundary condition \eqref{boundary cond for h}, we have
\begin{equation}\notag
    \begin{split}
        h(y_1,\psi)&=j_0(\psi)e^{-\frac{1}{l_0}\frac{y_1}{\cos\psi}} +\alpha_0 \int_0^{y_1}\frac{d\xi}{l_0\cos\psi}\theta(\xi)e^{-\frac{1}{l_0}\frac{y_1-\xi}{\cos\psi}},\text{ if }\psi\in (0,\pi/2),\\
       h(y_1,\psi)&=\alpha_0 \int_{y_1}^L\frac{d\xi}{l_0|\cos\psi|}\theta(\xi)e^{-\frac{1}{l_0}\frac{y_1-\xi}{\cos\psi}},\text{ if }\psi\in (\pi/2,\pi),
    \end{split}
\end{equation}where we write $n_1=\cos\psi.$ Then this $h$ must solve $\partial_{y_1}\int_{\mathbb{S}^2}n_1 h dn = 0.$
By plugging the solution $h$ into $\partial_{y_1}\int_{\mathbb{S}^2}n_1 h dn = 0,$
 we obtain that for any $y_1\in (0,L),$
 \begin{multline*}
     \int_0^{\pi/2}d\psi\sin\psi\cos\psi j_0(\psi)e^{-\frac{1}{l_0}\frac{y_1}{\cos\psi}} +\alpha_0  \int_0^{\pi/2}d\psi\sin\psi\int_0^{y_1}\frac{d\xi}{l_0}\theta(\xi)e^{-\frac{1}{l_0}\frac{y_1-\xi}{\cos\psi}}
 -    \alpha_0  \int_{\pi/2}^\pi d\psi\sin\psi\int_{y_1}^L\frac{d\xi}{l_0}\theta(\xi)e^{-\frac{1}{l_0}\frac{y_1-\xi}{\cos\psi}}\\= \int_0^{\pi/2}d\psi\sin\psi\cos\psi j_0(\psi)e^{-\frac{1}{l_0}\frac{y_1}{\cos\psi}} -\frac{\alpha_0}{l_0}  \int_0^{\pi/2}d\psi\sin\psi\int_0^{L}d\xi\theta(\xi)G(y_1-\xi,\psi)
 =i_0,
 \end{multline*}
for some constant $i_0$ where we define 
$$G(x,\psi)\eqdef -\text{sgn}(x)\exp\left(-\frac{1}{l_0}\frac{|x|}{\cos\psi}\right),\ \psi\in [0,\pi/2).$$ Note that $G$ satisfies 
$$-\frac{\partial^2}{\partial x^2}G+\frac{1}{l^2_0\cos^2\psi}G=2\delta'(x).$$
By defining $$\Phi(y_1)\eqdef \frac{l_0}{\alpha_0}\left(\int_0^{\pi/2}d\psi\sin\psi\cos\psi j_0(\psi)e^{-\frac{1}{l_0}\frac{y_1}{\cos\psi}}-i_0\right) ,$$ we can rewrite the problem as  
\begin{equation}
     \int_0^{\pi/2}d\psi\sin\psi\int_0^{L}d\xi\ \theta(\xi)G(y_1-\xi,\psi)=\Phi(y_1).
\end{equation}Here we rescale the variables and let $l_0=1 $ without loss of generality. 
We can also rewrite the problem for $Q(x,\psi)\eqdef e^{-\frac{|x|}{\cos\psi}}$ which satisfies $\cos\psi \frac{\partial Q}{\partial x}(x,\psi)= G(x,\psi)$ and \begin{equation}
     \int_0^{\pi/2}d\psi\sin\psi\int_0^{L}d\xi\ \theta(\xi)\cos\psi \frac{\partial Q}{\partial x}(y_1-\xi,\psi)=\Phi(y_1).
\end{equation}
Since $Q$ solves  $\cos\psi(-\frac{\partial^2}{\partial x^2}Q+\frac{1}{\cos^2\psi}Q)=2\delta(x)$, we have 
$$
   -2\theta (y_1)\int_0^{\pi/2}d\psi\sin\psi+ \int_0^{\pi/2}d\psi\tan\psi\int_0^{L}d\xi\ \theta(\xi)Q(y_1-\xi,\psi)=\Phi'(y_1).
$$
Therefore, we have
\begin{equation}\notag
      \theta (y_1)=\frac{1}{2} \int_0^{\pi/2}d\psi\tan\psi\int_0^{L}d\xi\ \theta(\xi)Q(y_1-\xi,\psi)-\frac{1}{2}\Phi'(y_1).
\end{equation}
 Define the kernel
 $$K(x)\eqdef \frac{1}{2} \int_0^{\pi/2}d\psi\tan\psi\ Q(x,\psi).$$  Then we have
 $$\int_{\mathbb{R}}dx\  K(x)
=  \frac{1}{2}\int_{\mathbb{R}}dx\ \int_0^{\pi/2}d\psi\tan\psi e^{-\frac{|x|}{\cos\psi}}= \int_0^{\pi/2}d\psi\sin\psi=1.$$
 Thus, the problem is now equivalent to prove the existence for $\theta $ which satisfies
 \begin{equation}\label{eq.final equation for theta}
     \theta(x)=\int_0^{L} \theta(\xi)K(x-\xi)d\xi-\frac{1}{2}\Phi'(x) \text { with }\int_{\mathbb{R}}K(x)dx=1,
 \end{equation} where $\Phi$ is given.
 Then this equation is the Fredholm integral equation of the second kind.  Since $\Phi'$ and $K$ are continuous and 
 $$\sup_{x\in (0,L)}\int_0^{L} K(x-\xi)d\xi<1,$$ there exists a unique solution $\theta:[0,L]\to \mathbb{R} $ which solves the Fredholm integral equation \eqref{eq.final equation for theta} by the contraction mapping theorem in $C([0,L])$.  
 
Once we have the perturbation of the temperature profile $\theta$, we can also obtain the density perturbation $\zeta$ using \eqref{eq.zetatheta} as $\zeta(y_1)=C_0-\theta(y_1)$.  We can determine the free constant $C_0$ uniquely using the mass conservation. This completes the proof of existence of the steady states to the linearized problem.
\end{proof}

\begin{remark}
  We note that the system \eqref{linearized equations} does not contain any viscosity term (no diffusion). The only dissipative effect is the exchange of energy with the radiation.
\end{remark}

 \subsection{A scaling limit yielding constant absorption rate}
 In Section \ref{sec.linearized}-\ref{sec.linearized existence},
we have considered the linearized problem of the system near the steady-state $(\tilde{\rho}, u, T)=(\tilde{\rho}_0,0,T_0)$ by letting the perturbations satisfy $|\zeta|+|\theta|+|u|\ll1 $ and $\frac{2\epsilon_0}{T_0}|\theta|\ll 1.$  There is another interesting scaling limit in which we obtain constant absorption rate, but an emission rate depending non-linearly in the temperature of the gases. In this case we can prove also well-posedness for the problem determining the temperature given the incoming flux. In this limit we assume that 
the perturbations still satisfy  $|\zeta|+|\theta|+|u|\ll1 $ but the ratio $\frac{2\epsilon_0}{T_0}$ is large such that $\frac{2\epsilon_0}{T_0}|\theta|\approx 1.$
In this case, the linearized system can be written as a nonlinear system with an exponential dependence in the temperature $e^{-\frac{2\epsilon_0}{T}}$, in which the perturbation term is no longer negligible.  Since $\frac{2\epsilon_0}{T_0}$ is large and $G_0=\frac{e^{-\frac{2\epsilon_0}{T_0}}}{1-e^{-\frac{2\epsilon_0}{T_0}}}\simeq e^{-\frac{2\epsilon_0}{T_0}},$ we rescale and let
$$G=e^{-\frac{2\epsilon_0}{T_0}} H,$$ and we will rewrite the system in terms of $H$.
Also, we write $$\frac{1}{T}=\frac{1}{T_0(1+\theta)}=\frac{1}{T_0}(1-\frac{T_0}{2\epsilon_0}\vartheta)\text{ such that }e^{-\frac{2\epsilon_0}{T}}=e^{-\frac{2\epsilon_0}{T_0}}e^\vartheta,$$ and$$\tilde{\rho}=\tilde{\rho}_0(1+\zeta)=\tilde{\rho}_0\left(1+\frac{T_0}{2\epsilon_0}\xi\right).$$ First of all, we recall \eqref{timedep Euler} and \eqref{rad eq} and observe that a stationary solution $(\tilde{\rho},u,T,G)$ satisfies the following system:\begin{equation}\label{stationary system}
        \begin{split}
       \nabla_y\cdot \left(\tilde{\rho} u\right)&=0,    \\
     (u\cdot \nabla_y)u +\frac{\nabla_y p}{\tilde{\rho}}&=0,\\
          u\cdot \nabla_y e +\frac{p}{\tilde{\rho}}\nabla_y\cdot u &=\epsilon_0 \int_{\mathbb{S}^2}dn\ \left[\frac{1-e^{-\frac{2\epsilon_0}{T}}}{1+e^{-\frac{2\epsilon_0}{T}}}G-\frac{e^{-\frac{2\epsilon_0}{T}}}{1+e^{-\frac{2\epsilon_0}{T}}}\right],\\
          n\cdot \nabla_y G&=\epsilon_0 \rho\left[e^{-\frac{2\epsilon_0}{T}} (1+G)-G\right].
        \end{split}
    \end{equation}
Then we now introduce a new scaling limit $|\zeta|+|u|+|\theta|\ll 1$ with $\frac{2\epsilon_0}{T_0}|\theta|\approx 1$, $\zeta=\frac{T_0}{2\epsilon_0}\xi$ and $\theta\approx \frac{T_0}{2\epsilon_0}\vartheta$ such that $(\xi,u,\vartheta,H)$ satisfies the following leading-order system with an exponential dependence in the temperature
 as
\begin{equation}\label{Arrhenius}
        \begin{split}
    &  \tilde{\rho}_0 \nabla_y\cdot u=0,    \\
   & \frac{T_0}{2}\nabla_y(\xi+ \theta)=0,\\
        &  0 =\epsilon_0e^{-\frac{2\epsilon_0}{T_0}} \int_{\mathbb{S}^2}dn\ \left[H-e^{\vartheta}\right],\\
        &  n\cdot \nabla_y H=\epsilon_0 \tilde{\rho}_0\left[ e^{\vartheta}-H\right],
        \end{split}
    \end{equation}as $e^{-\frac{2\epsilon_0}{T}}\ll 1.$
    Using the energy and radiation equations \eqref{Arrhenius}$_3$ and \eqref{Arrhenius}$_4$, we obtain that
    $$\nabla_y\cdot \int_{\mathbb{S}^2}dn\ nH=0.$$
By the mass conservation, we have 
\begin{equation}\label{total mass}
    \int_{\Omega}\tilde{\rho}dy= \int_{\Omega}\tilde{\rho}_0\left(1+\frac{T_0}{2\epsilon_0}\xi\right)dy=M_0\text{ given.}
\end{equation}
Thus, the final model that we will consider is
\begin{equation}\label{eq.Arrhenius.final}
    \begin{split}
& \nabla_y\cdot u=0,    \\
   & \nabla_y(\xi+ \theta)=0,\\
        &  \nabla_y\cdot \int_{\mathbb{S}^2}dn\ nH=0,\\
        &  n\cdot \nabla_y H=b_1\left[ e^{\vartheta}-H\right],\\
     &   \int_\Omega \xi \ dy = b_2,
    \end{split}
\end{equation}where $$b_1=\epsilon_0 \tilde{\rho}_0\ \text{   and   }\ b_2=\frac{2\epsilon_0}{T_0}\left(M_0-\int_\Omega \tilde{\rho}_0\ dy\right).$$
Notice that the emission $b_1e^{\theta}$ depends non-linearly in the rescaled temperature. \subsubsection{In a slab geometry}\label{sec.slab}
As in Section \ref{sec.slab1}, we examine the problem above in a physically interesting slab-geometry situation where we assume that the gas molecules lie in the slab $y_1\in [0,L]$ and that $(\xi,u,\theta,H)$ is symmetric in $y_2$ and $y_3$. We consider the boundary conditions at $y_1=0$ and $=L$ where we have the incident incoming radiation only on one side $y_1=0$ such that
\begin{equation}\label{arrhenius boundary}
    H(0,\psi)=a_+(\psi),\ \text{if }\psi\in(0,\frac{\pi}{2}), \text{ and }H(L,\psi)=a_-(\psi)=0,\ \text{if }\psi\in(\frac{\pi}{2},\pi).
\end{equation} The function $a_+(\psi)$ can be normalized to have $2\pi\int_0^{\pi/2} a_+(\psi)\sin\psi d\psi=1.$

We prove the existence of a unique solution to the boundary value problem in a slab geometry. More precisely:
\begin{theorem}
 The non-linear system in a slab \eqref{eq.Arrhenius.final} with the boundary conditions \eqref{arrhenius boundary} and the mass conservation \eqref{total mass} has a unique solution $(\zeta,\theta)$ with $u=0$.
 \end{theorem}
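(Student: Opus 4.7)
The plan is to follow the same strategy as in the proof of Theorem \ref{existence theorem 3}: reduce the radiative transfer equation to a Fredholm integral equation on $[0,L]$ and invoke the contraction mapping principle in $C([0,L])$. The key new ingredient is a change of variable that linearizes the apparently nonlinear problem.

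First I would impose the ansatz $u\equiv 0$, which satisfies \eqref{eq.Arrhenius.final}$_1$ automatically, while \eqref{eq.Arrhenius.final}$_2$ forces $\xi+\vartheta=C_0$ for some constant $C_0$ to be fixed later by the mass constraint. With the slab symmetry $H=H(y_1,\psi)$, equation \eqref{eq.Arrhenius.final}$_4$ becomes the scalar ODE
$$\cos\psi\,\partial_{y_1}H(y_1,\psi)\;=\;b_1\bigl(\phi(y_1)-H(y_1,\psi)\bigr),\qquad \phi\eqdef e^{\vartheta},$$
which I integrate explicitly along characteristics using \eqref{arrhenius boundary} to obtain a linear expression $H=H[\phi]$ depending also on the prescribed incoming data $a_+$.

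The key observation is that, upon integrating the transport equation over $\mathbb{S}^2$, the flux constraint \eqref{eq.Arrhenius.final}$_3$ becomes the pointwise Eddington-type identity
$$4\pi\,\phi(y_1)\;=\;\int_{\mathbb{S}^2}H(y_1,n)\,dn.$$
Substituting the explicit expression $H=H[\phi]$ into this identity yields a linear Fredholm integral equation of the second kind
$$\phi(y_1)\;=\;\Psi(y_1)+\int_0^L K(y_1-\xi')\,\phi(\xi')\,d\xi',$$
with
$$\Psi(y_1)=\tfrac{1}{2}\int_0^{\pi/2}a_+(\psi)\sin\psi\,e^{-b_1 y_1/\cos\psi}\,d\psi,\qquad K(x)=\tfrac{b_1}{2}\int_0^{\pi/2}\tan\psi\,e^{-b_1|x|/\cos\psi}\,d\psi.$$
The same Fubini computation used in Theorem \ref{existence theorem 3} gives $\int_{\mathbb{R}}K(x)\,dx=1$, and hence $\sup_{y_1\in(0,L)}\int_0^L K(y_1-\xi')\,d\xi'<1$. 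The associated integral operator is therefore a strict contraction on $C([0,L])$, producing a unique continuous fixed point $\phi$.

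It remains to verify $\phi>0$ on $[0,L]$ so that $\vartheta=\log\phi$ is well defined. Picard iteration from $\phi_0\equiv 0$ gives monotone non-negative iterates with limit $\phi\geq\Psi$, and the normalization of $a_+$ together with its non-negativity forces $\Psi(y_1)>0$ for every $y_1\in[0,L]$. One then recovers $\vartheta=\log\phi$ uniquely, and the mass constraint \eqref{total mass} determines $C_0$ (and hence $\xi=C_0-\vartheta$) uniquely as well. The main obstacle at first sight is the exponential nonlinearity $e^{\vartheta}$ in the coupling between temperature and radiation; the substitution $\phi=e^{\vartheta}$ is the move that turns the system into a linear Fredholm problem and allows the contraction argument of Theorem \ref{existence theorem 3} to apply essentially verbatim.
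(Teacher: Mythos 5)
Your proposal is correct and follows essentially the same route as the paper: the substitution $\phi=e^{\vartheta}$, the explicit integration of the transport equation along characteristics, the reduction to a Fredholm equation of the second kind whose kernel has total mass $1$ on $\mathbb{R}$ (hence contraction on $C([0,L])$), and the recovery of $\xi$ from the pressure and mass constraints. The only cosmetic difference is that you obtain the integral equation from the angularly averaged identity $4\pi\phi=\int_{\mathbb{S}^2}H\,dn$ rather than by differentiating the constant-flux formula as in \eqref{eq.arr1dfinal} — these are the same computation — and your explicit positivity check for $\phi$ (needed so that $\vartheta=\log\phi$ is defined) is a small but welcome addition that the paper leaves implicit.
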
\begin{proof}
Here we claim that the system \eqref{eq.Arrhenius.final} can be written as a linear non-local equation for $e^\vartheta.$ To this end, we define 
$$w=w(y_1)=e^{\vartheta(y_1)}.$$Then by solving \eqref{eq.Arrhenius.final}$_4$, we have 
\begin{equation}\label{eq.H.explicit1}H(y_1,\psi)=a_+(\psi)e^{-\frac{b_1y_1}{\cos\psi}}+\int_0^{y_1}\frac{b_1}{\cos\psi}e^{-\frac{b_1(y_1-z)}{\cos\psi}}w(z)dz\text{ for }0<\psi<\frac{\pi}{2},\end{equation}
and
\begin{equation}\label{eq.H.explicit2}H(y_1,\psi)=\int_{y_1}^L\frac{b_1}{|\cos\psi|}e^{-\frac{b_1(y_1-z)}{\cos\psi}}w(z)dz \text{ for }\frac{\pi}{2}<\psi<\pi,\end{equation}
if $0<y_1<L$.
Now we normalize the constant $b_1=1$ by rescaling $y_1$ and $L$.
By \eqref{eq.Arrhenius.final}$_3$, we have for some given flux $j_0$
$$j_0=\int_{\mathbb{S}^2}nHdn =2\pi \int_0^{\pi} \cos\psi H(y_1,\psi)\sin\psi d\psi,$$ with $n_1=\cos\psi.$
Then by \eqref{eq.H.explicit1}-\eqref{eq.H.explicit2}, we observe that
\begin{equation}\label{eq.j0}
\frac{    j_0}{2\pi}
=S(y_1)+\int_0^{\pi/2}d\psi  \sin\psi\int_0^{y_1}e^{-\frac{y_1-z}{\cos\psi}}w(z)dz-\int_0^{\pi/2}d\psi  \sin\psi\int_{y_1}^Le^{\frac{y_1-z}{\cos\psi}}w(z)dz,
\end{equation}where $S$ is defined as 
$$ S(y_1)\eqdef \int_0^{\pi/2}d\psi \cos\psi \sin\psi a_+(\psi)e^{-\frac{y_1}{\cos\psi}},$$ and we made a change of variables $\psi\mapsto \pi-\psi$ in the last integral. Then we define 
$$Z(y_1)=sgn(y_1)e^{-\frac{|y_1|}{\cos\psi}},\text{ and }\Lambda(y_1)=e^{-\frac{|y_1|}{\cos\psi}}.$$ Then we have 
$$\frac{\partial}{\partial y_1}\Lambda(y_1)=-\frac{Z(y_1)}{\cos\psi}\text{ and }\frac{\partial^2}{\partial y_1^2}\Lambda=-\frac{2\delta(y_1)}{\cos\psi }+\frac{\Lambda}{\cos^2\psi}.$$Then by differentiating \eqref{eq.j0} with respect to $y_1$, we have
\begin{multline}\label{eq.arr1dfinal}
    \partial_{y_1}S=\int_0^{\pi/2}d\psi \cos\psi\sin\psi \int_0^L \partial^2_{y_1}\Lambda (y_1-z)w(z)dz\\
    =-2 w(y_1)+\int_0^{\pi/2}d\psi \tan\psi \int_0^L \Lambda (y_1-z)w(z)dz.
\end{multline}
Note that $$\int_0^{\pi/2}d\psi \ \tan\psi \int_{-\infty}^{\infty}dy\ \Lambda(y_1;\psi)= 2.$$ Therefore, by the contraction mapping principle, we have can prove the existence of a unique solution $w(y_1)$ to the nonlocal equation \eqref{eq.arr1dfinal}, which also uniquely determines $j_0$. Therefore, $\vartheta(y_1)$ is also unique. Finally, we use \eqref{eq.Arrhenius.final}$_2$ and \eqref{eq.Arrhenius.final}$_5$ to conclude that the solution $\xi $ is also uniquely determined. Therefore, the solution is unique for each given $a_+(\psi)$ and $a_-(\psi).$
 \end{proof}
 
 \subsubsection{Higher dimensional case}
 Now, we generalize our previous formulation to a higher dimensional case. More precisely, we consider the system \eqref{eq.Arrhenius.final} where the gas molecules lie in a general 3-dimensional convex domain $\Omega$ with $C^1$ boundary. As before, we rescale $y$ and normalize $b_1=1.$ We first formulate a boundary condition at any given $y_0\in \partial \Omega.$ Define $\nu=\nu_{y_0}$ as the outward normal vector at $y_0$. Then for any $n\in \mathbb{S}^2$ and for $s>0$ the line $y_0-sn$ is contained in $\rth-\Omega$ if $n\cdot \nu_{y_0}<0.$ Then we provide the following incoming boundary condition for each $y_0\in \partial \Omega$: if $n\cdot \nu_{y_0}<0,$ then 
 \begin{equation}
     \label{eq.bc for 3d} 
     H(y_0,n)=f(n),
 \end{equation} for a given profile $f$. This boundary condition describes the physical situation where the radiation is coming from the boundary $|y|=+\infty$ and the incoming radiation at $y_0\in \partial\Omega$ for each incoming direction $n$ is determined by the radiation profile at $|y|=+\infty$ which satisfies 
 $$f(n)=\lim_{s\to \infty} H(-sn,n).$$
 
 \begin{theorem}For each $b_1>0$, $b_2\in \mathbb{R}$, and $\Omega\in\rth $ convex and bounded with $\partial\Omega \in C^1$, there exists a unique solution with $u=0$ to the system with exponential dependence in a higher dimensional case \eqref{eq.Arrhenius.final} with the boundary conditions \eqref{eq.bc for 3d} and the mass conservation \eqref{total mass}.
 \end{theorem}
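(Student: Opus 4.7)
The plan is to reduce the nonlinear system to a single linear Fredholm integral equation for $w(y)\eqdef e^{\vartheta(y)}$ on $\Omega$, and then apply the Banach fixed-point theorem. The starting observation is that integrating the transport equation \eqref{eq.Arrhenius.final}$_4$ over $\stw$ yields
\[
\nabla_y \cdot \int_{\stw} n H(y,n)\, dn = b_1\left(4\pi e^{\vartheta(y)} - \int_{\stw} H(y,n)\, dn\right),
\]
so combining with the divergence-free flux identity \eqref{eq.Arrhenius.final}$_3$ gives the pointwise relation $4\pi w(y) = \int_{\stw} H(y,n)\, dn$, which is the key algebraic consequence of the energy balance.

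For each $(y,n)\in\Omega\times\stw$, convexity of $\Omega$ ensures that the backward exit time $\tau(y,n)\eqdef \inf\{s>0:y-sn\in\partial\Omega\}$ is finite and well-defined, and the $C^1$ regularity of $\partial\Omega$ makes $\tau$ continuous on $\Omega\times\stw$. Solving the transport equation \eqref{eq.Arrhenius.final}$_4$ along the characteristic $r\mapsto y-rn$ with the incoming boundary condition \eqref{eq.bc for 3d} produces
\[
H(y,n) = f(n)\, e^{-b_1 \tau(y,n)} + b_1\int_0^{\tau(y,n)} e^{-b_1 r}\, w(y-rn)\, dr.
\]
Inserting this into the pointwise relation above and changing variables $y' = y - rn$ (so that $dy' = r^2\, dr\, dn$ and, by convexity, the constraint $0<r<\tau(y,n)$ becomes $y'\in\Omega$) reduces the entire problem to the linear Fredholm equation
\[
w(y) = \Phi(y) + \int_\Omega K(y-y')\, w(y')\, dy', \qquad K(z)\eqdef \frac{b_1 e^{-b_1 |z|}}{4\pi |z|^2}, \qquad \Phi(y)\eqdef \frac{1}{4\pi}\int_{\stw} f(n)\, e^{-b_1 \tau(y,n)}\, dn.
\]

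A direct computation in spherical coordinates gives $\int_{\rth} K(z)\, dz = \int_0^\infty b_1 e^{-b_1 r}\, dr = 1$, and since $\Omega$ is bounded with diameter $D$, one has $\Omega\subset\overline{B(y,D)}$ for every $y\in\Omega$, whence $\int_\Omega K(y-y')\, dy' \leq 1 - e^{-b_1 D} < 1$, uniformly in $y$. Hence the operator $T[w](y)\eqdef\int_\Omega K(y-y')\, w(y')\, dy'$ is a strict contraction on $C(\overline\Omega)$, and the Banach fixed-point theorem yields a unique $w\in C(\overline\Omega)$; when $f\geq 0$ the iteration preserves positivity and $\Phi\geq 0$, so $w>0$ and $\vartheta=\log w$ is well defined. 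The choice $u=0$ trivially satisfies \eqref{eq.Arrhenius.final}$_1$, while \eqref{eq.Arrhenius.final}$_2$ forces $\xi+\theta$ to be constant on $\Omega$, and this constant is pinned down uniquely by the mass conservation \eqref{total mass}.

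The main technical subtlety is the behaviour of $\tau(y,n)$ as $y\to\partial\Omega$ in directions tangent to the boundary, where $\tau$ may degenerate to $0$; this is not a genuine obstacle because the singular set in $\stw$ has zero surface measure for each fixed $y$, and dominated convergence combined with the $C^1$ regularity of $\partial\Omega$ suffices to establish $\Phi\in C(\overline\Omega)$ and to legitimise the change of variables. One could equally work in $L^\infty(\Omega)$, where only measurability of $\tau$ is required and the contraction argument is unchanged.
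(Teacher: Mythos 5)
Your proposal is correct and terminates at exactly the same Fredholm fixed-point equation as the paper, namely $w(y)=\Phi(y)+\int_\Omega \frac{b_1e^{-b_1|y-y'|}}{4\pi|y-y'|^2}\,w(y')\,dy'$ solved by contraction in $C(\overline\Omega)$, but you reach it by a slightly more elementary route. The paper works with the first angular moment: it forms the flux $\vec J=\int_{\stw}nH\,dn$, imposes $\operatorname{div}\vec J=0$, and extracts the local term $w(y)$ from the distributional identity $\operatorname{div}\left(e^{-r}r^{-2}\hat r\right)=-e^{-r}r^{-2}+4\pi\delta(r)$, with the source appearing as $-\frac{1}{4\pi}\operatorname{div}\vec R$. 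You instead take the zeroth moment of the transport equation, which combined with $\operatorname{div}\int_{\stw}nH\,dn=0$ gives the pointwise scalar relation $4\pi w(y)=\int_{\stw}H(y,n)\,dn$; substituting the characteristic representation of $H$ and changing variables $y'=y-rn$ then yields the same kernel, with the source written directly as $\frac{1}{4\pi}\int_{\stw}f(n)e^{-b_1\tau(y,n)}\,dn$. The two sources agree because $n\cdot\nabla_y s(y,n)=1$ gives $\operatorname{div}\vec R=-\int_{\stw}f(n)e^{-s(y,n)}\,dn$. Your derivation avoids delta functions entirely and makes the uniform contraction bound $\int_\Omega K(y-y')\,dy'\le 1-e^{-b_1D}<1$ explicit (the paper only asserts the strict inequality); the treatment of $u=0$, the constancy of $\xi+\theta$ from \eqref{eq.Arrhenius.final}$_2$, and the normalization of the free constant by the total mass \eqref{total mass} coincide with the paper's.
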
\begin{proof}
    Now, given the incoming boundary condition, we solve \eqref{eq.Arrhenius.final}$_4$ and obtain
    $$H(y_0+sn,n)=f(n)e^{-s}+\int_0^s e^{-(s-\xi)}w(y_0+\xi n)d\xi,\ s>0.$$Alternatively, if we are given $y\in \Omega$ and $n\in\mathbb{S}^2$, there exist unique $y_0=y_0(y,n)\in \partial \Omega$ and $s=s(y,n)$ such that
    $$y=y_0(y,n)+s(y,n)n,$$ and 
    $$H(y,n)=f(n)e^{-s(y,n)}+\int_0^{s(y,n)} e^{-(s(y,n)-\xi)}w(y_0(y,n)+\xi n)d\xi.$$
    Define the flux $\vec{J}=\vec{J}(y)$ as 
    $$\vec{J}\eqdef \int_{\mathbb{S}^2}dn\ nH.$$ Then \eqref{eq.Arrhenius.final}$_3$ implies that $div(\vec{J})=0.$
    Then we further have
    \begin{equation*}
    \vec{J}(y)= \vec{R}(y)+\int_{\mathbb{S}^2}n\left(\int_0^{s(y,n)} e^{-(s(y,n)-\xi)}w(y_0(y,n)+\xi n)d\xi\right)dn,
    \end{equation*}where 
    $$\vec{R}(y)\eqdef \int_{\mathbb{S}^2}nf(n)e^{-s(y,n)}dn.$$
    Thus, we have
    \begin{equation}\label{eq.div.JR}
    0=div(\vec{J})= div(\vec{R})+div\int_{\mathbb{S}^2}n\left(\int_0^{s(y,n)} e^{-(s(y,n)-\xi)}w(y_0(y,n)+\xi n)d\xi\right)dn.
    \end{equation}
    Here we further observe that
    \begin{multline*}
        \int_{\mathbb{S}^2}n\left(\int_0^{s(y,n)} e^{-(s(y,n)-\xi)}w(y_0(y,n)+\xi n)d\xi\right)dn\\
        =\int_{\partial\Omega}dz\int_{\mathbb{S}^2}n\left(\int_0^{s(y,n)} e^{-(s(y,n)-\xi)}w(z+\xi n)\delta(z-y_0(y,n))d\xi\right)dn.
    \end{multline*}
    We make a change of variables $\xi\mapsto \hat{\xi}=s(y,n)-\xi.$ Then we have\begin{multline*}
        \int_{\mathbb{S}^2}n\left(\int_0^{s(y,n)} e^{-(s(y,n)-\xi)}w(y_0(y,n)+\xi n)d\xi\right)dn\\
        =\int_{\partial\Omega}dz\int_{\mathbb{S}^2}n\left(\int_0^{s(y,n)} e^{-\hat{\xi}}w(z+(s-\hat{\xi}) n)\delta(z-y_0(y,n))d\hat{\xi}\right)dn.
    \end{multline*}
    Then we make another change of variables $(
    \hat{\xi},n)\mapsto \eta\eqdef y-\hat{\xi} n \in \Omega$. Since $y$ is independent of $\hat{\xi}$ and $n$, we obtain the Jacobian of the change of variables $J(\eta,n)\eqdef \left|\frac{\partial(\hat{\xi},n)}{\partial \eta}\right|=\frac{1}{|y-\eta|^2}$. Thus, for $n=n(y-\eta)=\frac{y-\eta}{|y-\eta|}$, we have
    \begin{equation*}
     \int_{\partial\Omega}dz\int_{\mathbb{S}^2}n\left(\int_0^{s(y,n)} e^{-\hat{\xi}}w(z+(s-\hat{\xi}) n)\delta(z-y_0(y,n))d\hat{\xi}\right)dn
       =\int_{\Omega}\frac{1}{|y-\eta|^2}\frac{y-\eta}{|y-\eta|} e^{-|y-\eta|}w(\eta)d\eta,
    \end{equation*}since $\hat{\xi}=(y-\eta)\cdot n= |y-\eta|.$
    Then, going back to \eqref{eq.div.JR}, we have
  $$ 0= div(\vec{R})+\int_{\Omega}div\left(\frac{1}{|y-\eta|^2}\frac{y-\eta}{|y-\eta|} e^{-|y-\eta|}\right)w(\eta)d\eta.$$
  Note that 
  \begin{equation*}
      div\left(\frac{1}{|y-\eta|^2}\frac{y-\eta}{|y-\eta|} e^{-|y-\eta|}\right)=div \left(\frac{e^{-r}}{r^2}\hat{r}\right)=-\frac{e^{-r}}{r^2}+4\pi\delta(r).
  \end{equation*}  Therefore, we finally have
\begin{equation}\label{contmap.eq}w(y)=\int_{\Omega}\frac{e^{-|y-\eta|}}{4\pi|y-\eta|^2} w(\eta)d\eta- \frac{1}{4\pi}div(\vec{R}).\end{equation}
This is a Dirichlet problem for an non-local elliptic operator by letting $w(\eta)=0$ if $\eta\notin \Omega$ and $w\in L^\infty(\Omega)$. Note that $\int_{\Omega}\frac{e^{-|y-\eta|}}{4\pi |y-\eta|^2}dy<1$ and we obtain a convergent series. Then a unique solution $w$ exists as it can be readily seen from \eqref{contmap.eq} as a fixed-point problem using the contraction mapping principle.  This completes the proof of existence of the stationary solutions to  \eqref{eq.Arrhenius.final}.
\end{proof}
\begin{remark}
The term $-\frac{1}{4\pi}div(\vec{R})$ in \eqref{contmap.eq} must be positive in order to make sure that $w=e^\theta$ is positive. The positivity of $-\frac{1}{4\pi}div(\vec{R})$ can be shown simply using the definition of $\vec{R}$ $$\vec{R}(y)\eqdef \int_{\mathbb{S}^2}nf(n)e^{-s(y,n)}dn,$$and $\frac{\partial s}{\partial n}>0.$
\end{remark}

\section{An Euler-like system arising in a non-LTE scaling limit}\label{sec.Euler nonLTE} 
    In this section, we consider a general non-LTE situation where the equilibria for the ground- and excited-states molecules are not related by the Boltzmann ratio. In this case, our goal is to derive the system of Euler equations via a Chapman-Enskog expansion. 
    \subsection{Maxwellian equilibria}In the case of a non-LTE situation, we define and consider the local Maxwellian equilibria $M^{(j)}$ separately for each type of molecules $j=1,2$ as \begin{equation}
    \label{nonLTE equilibrium}
    M^{(j)}= M^{(j)}(\rho_j,u,T)\eqdef \frac{c_0\rho_j}{T^{\frac{3}{2}}}\exp\left(-\frac{|v-u|^2}{T}\right),\ j=1,2.
\end{equation} Here the mass densities, macroscopic velocity, and temperature are defined as
\begin{equation}
    \begin{split}
        &\rho_j\eqdef \int_\rth F^{(j)}dv,\\
         &u\eqdef \frac{1}{\rho_j}\int_\rth v F^{(j)}dv,\text{ for }i=1,2,3,\\
         &T \eqdef \frac{2}{3\rho_j}\int_\rth |v-u |^2F^{(j)}dv,
    \end{split}\end{equation}where we recall that the Boltzmann constant is $\frac{1}{2}$ throughout the paper. Compare this equilibria to the LTE equilibrium in \eqref{Feq}. Note that the macroscopic velocities and the temperatures for both types of molecules are the same in the Maxwellian equilibria \eqref{nonLTE equilibrium} because we assume that there is a meaningful exchange of momentum and energy by the elastic collisions $\frac{1}{\alpha}\mathcal{K}_{el}[F^{(1)},F^{(2)}]$  between the two species.
    \subsection{Chapman-Enskog expansion and the Euler-radiation limit yielding non-LTE}\label{sec.chapman.nonlte}
Then we will consider the Chapman-Enskog expansion \eqref{Chapman} on the equation \eqref{eq.rescaled kinetic equations} with $\sigma\eqdef \frac{\eta}{\alpha}$ as follows:
\begin{equation}
    \label{eq.chapman.euler.nonlte}
    \Fj=M^{(j)}(1+\alpha f^{(j)}_1+\cdots ).
\end{equation}
The key difference is also on separate $L^2$ inner products for the density equations for each type of molecules $j=1,2$, since we want to consider the non-LTE situation where the molecule densities are not related by the Boltzmann ratio.
More precisely, we use the following $L^2$ inner product for the density equations for each species $j=1,2$ as 
\begin{equation}\label{new L2 inner product}\langle \Fj, G^{(j)}\rangle \eqdef \int_{\mathbb{R}^3}dv\ \Fj\cdot G^{(j)},\ j=1,2.\end{equation} For the velocity and the energy equations, we use \eqref{eq.L2 inner product}.
Now we postulate the Chapman-Enskog expansion \eqref{eq.chapman.euler.nonlte} in \eqref{eq.rescaled kinetic equations} in the limit $\alpha \to 0^+$ and $\sigma\approx 1$. Then the leading order equation in $\alpha$ is given by
\begin{multline*}
   \partial_t M+ v\cdot \nabla_y M 
   = L_{el}[M;f_1] +\sigma L_{non.el}[M]+\mathcal{R}_p[M,G]\\=\begin{pmatrix}L^{(1,1)}_{el}[M;f_1]+L^{(1,2)}_{el}[M;f_1]\\L^{(2,1)}_{el}[M;f_1]+
L^{(2,2)}_{el}[M;f_1]\end{pmatrix}+\sigma \begin{pmatrix}2 L_{1,1}[M]+ L_{1,2}^{(1)}[M]\\ L^{(2)}_{non.el}[M]\end{pmatrix}+\mathcal{R}_p[M,G],
\end{multline*}
where $M=(M^{(1)},M^{(2)})^\top,$ $\rho=(\rho_1,\rho_2)^\top,$ $u=(u_1,u_2,u_3)$ and $T$ denote the equilibria, the mass densities, the macroscopic velocity, and the temperature, respectively, and the linear operators are defined as
\begin{equation}\notag
\begin{split}
L^{(i,j)}_{el}[M;f_1]&\eqdef\mathcal{K}^{(i,j)}_{el}[M^{(i)},M^{(j)}f_1^{(j)}]+\mathcal{K}^{(i,j)}_{el}[M^{(i)}f_1^{(i)},M^{(j)}],\\
L_{(1,1)}[M]&\eqdef\mathcal{K}_{(1,1)}[M,M],\\
L^{(1)}_{1,2}[M]&\eqdef\mathcal{K}^{(1)}_{1,2}[M,M],\text{ and }\\
L^{(2)}_{non.el}[M]&\eqdef\mathcal{K}^{(2)}_{non.el}[M,M].
\end{split}
\end{equation}
 By taking the inner product \eqref{new L2 inner product} with respect to $v$ variable against $1$ for the densities, we have
 \begin{equation}\label{Euler.nonLTE density}
    \begin{split}
        &\partial_t \rho_1+\nabla_y\cdot  (\rho_1u)=\sigma H^{(1)}+Q^{(1)},\\
         &\partial_t \rho_2+\nabla_y\cdot  (\rho_2u)= \sigma H^{(2)}+Q^{(2)},
    \end{split}
\end{equation}
where \begin{equation}\label{Hj def} H^{(j)}\eqdef \int \Knj [M,M]dv,\end{equation}
\begin{equation} Q^{(j)}\eqdef \int \Kradj [M,G]dv.\end{equation}Here note that the terms $H^{(1)}$, $H^{(2)}$, $Q^{(1)}$, and $Q^{(2)}$ do not vanish via the $L^2$ inner product as in Section \ref{sec.Euler limit}, since we take the $L^2$ inner product separately for each species $j=1,2$ (c.f. \eqref{eq.L2 inner product}).
 On the other hand, for the velocity and the energy equations, we cannot separate equations as in \eqref{Euler.nonLTE density} because there is exchange of momentum and energy by the elastic collisions $\frac{1}{\alpha}\mathcal{K}_{el}[F^{(1)},F^{(2)}]$  between the two species. Therefore, we take the inner product \eqref{eq.L2 inner product} against $\begin{pmatrix}v-u\\v-u\end{pmatrix}$ for the velocity and $\frac{4}{3}\begin{pmatrix}\frac{|v-u |^2}{2}\\\frac{|v-u |^2}{2}+\epsilon_0\end{pmatrix}$ for the energy, and obtain the velocity and energy equations for both molecules as
\begin{equation}\label{Euler.nonLTE velocity energy}
    \begin{split}
          &\partial_t ((\rho_1+\rho_2)u)+\nabla_y\cdot  ((\rho_1+\rho_2)u\otimes u)+\nabla_y\cdot (S^{(1)}+S^{(2)})\\&\qquad\qquad\qquad\qquad\qquad\qquad=\sigma(J^{(1)}_m+J^{(2)}_m)+(\Sigma^{(1)}+\Sigma^{(2)}),\\
        &\partial_t ((\rho_1+\rho_2)T)+\nabla_y\cdot  ((\rho_1+\rho_2)uT+(J_q^{(1)}+J_q^{(2)}))\\&\qquad\qquad\qquad\qquad\qquad\qquad=\sigma(J^{(1)}_e+J^{(2)}_e)+J^{(1)}_r+J^{(2)}_r,
    \end{split}
\end{equation}where\begin{equation}
    S^{(j)}\eqdef \int (v-u )\otimes (v-u )M^{(j)} dv,
\end{equation}
\begin{equation}
    J_m^{(j)}\eqdef \int v\Knj[M,M]dv,
\end{equation}
\begin{equation}
    \Sigma^{(j)}\eqdef \int v\Kradj[M,G]dv,
\end{equation}
\begin{equation}
    J_q^{(j)}\eqdef \int \frac{4}{3}\left(\frac{|v-u |^2}{2}+\epsilon_0\delta_{j,2}\right)(v-u )M^{(j)} dv=0,
\end{equation}since the integrand is odd in $v$, and
\begin{equation}
    J^{(j)}_e=\int \frac{4}{3}\left(\frac{|v-u |^2}{2}+\epsilon_0\delta_{j,2}\right) \Knj[M,M]dv,
\end{equation}
and
\begin{equation}\label{J2r def}
    J^{(j)}_r\eqdef \int \frac{4}{3}\left(\frac{|v-u |^2}{2}+\epsilon_0\delta_{j,2}\right)\mathcal{R}_p^{(j)}[M,G]dv,
\end{equation}
for $j=1,2$ where we use the notation
$$\mathcal{K}^{(1)}_{non.el}[M,M]\eqdef 2 \mathcal{K}_{(1,1)}[M,M]+\mathcal{K}^{(1)}_{1,2}[M,M],$$ with all other notations given in \eqref{eq.Rp}-\eqref{Kntw}.

 \subsection{Reduction of the operators}\label{sec.representations}
 Here we reduce each operator of  \eqref{Euler.nonLTE} more explicitly below. 
Since the total mass and momentum of molecules is conserved in the non-elastic collisions and the transfer of mass and momentum due to the radiation is also conserved, we have
$$ H^{(1)}=-H^{(2)},\ Q^{(1)}=-Q^{(2)},\ J^{(1)}_m=-J^{(2)}_m,\ \text{and}\ \Sigma^{(1)}=-\Sigma^{(2)}.$$ Also, by definition, we note that 
$J^{(1)}_r+J^{(2)}_r=\frac{4\epsilon_0}{3}Q^{(2)}.$ 
 In order to see the relationship between $J^{(1)}_e$ and $J^{(2)}_e$, we first recall the conservation of total energy \eqref{EnergyInel} during each  non-elastic collision. The conservation of energy implies that for a general distribution $F$, we obtain
$$\int_\rth \frac{1}{2}|v|^2\mathcal{K}^{(1)}_{non.el}[F,F]dv + \int_\rth \left(\frac{1}{2}|v|^2+\epsilon_0\right)\mathcal{K}^{(2)}_{non.el}[F,F]dv =0,$$ for \eqref{eq.knonel1} and \eqref{eq.knonel2}.  Then we deduce that
\begin{multline*}
   0= \int_\rth\frac{1}{2}|v- u+ u|^2\mathcal{K}^{(1)}_{non.el}[M,M]dv + \int_\rth \left(\frac{1}{2}|v- u+ u|^2+\epsilon_0\right)\mathcal{K}^{(2)}_{non.el}[M,M]dv \\
   =\frac{3}{4}J^{(1)}_e + \frac{3}{4}J^{(2)}_e- u\cdot J^{(2)}_m +  u\cdot J^{(2)}_m
   +\frac{1}{2}| u|^2H^{(2)} -\frac{1}{2}| u|^2H^{(2)}.
\end{multline*}
Therefore we have
\begin{equation}\notag
    J^{(1)}_e + J^{(2)}_e=0.
\end{equation}
Consequently, we finally have the Euler-like system coupled with radiation for the non-LTE situation as
\begin{equation}\label{Euler.nonLTE}
    \begin{split}
              &\partial_t \rho_1+\nabla_y\cdot  (\rho_1u)=\sigma H^{(1)}+Q^{(1)},\\
         &\partial_t \rho_2+\nabla_y\cdot  (\rho_2u)= \sigma H^{(2)}+Q^{(2)}\\ &\partial_t ((\rho_1+\rho_2)u)+\nabla_y\cdot  ((\rho_1+\rho_2)u\otimes u)+\nabla_y\cdot (S^{(1)}+S^{(2)})=0,\\
        &\partial_t ((\rho_1+\rho_2)T)+\nabla_y\cdot  ((\rho_1+\rho_2)uT)=\frac{4\epsilon_0}{3}Q^{(2)}.
    \end{split}
\end{equation}

Regarding the equations for mass \eqref{Euler.nonLTE}$_1$ and \eqref{Euler.nonLTE}$_2$, we first observe that we can further write the operators as
\begin{multline}\label{eq.H.reduction.initial}
    H^{(2)}= -H^{(1)} =\int_{\mathbb{R}^3}dv\ \Kntw[M,M](v)\\
    =\rho_1^2 e^{-\frac{2\epsilon_0}{T}}\int_{\mathbb{R}^3}dv \int_\rth dv_4 \int_\stw d\omega\ W_+ (v,v_4;v_1,v_2)\mathcal{Z}(v,u,T)\mathcal{Z}(v_4,u,T)\\
    -\rho_1\rho_2\int_{\mathbb{R}^3}dv \int_\rth dv_4 \int_\stw d\omega\ W_+ (v,v_4;v_1,v_2)\mathcal{Z}(v,u,T)\mathcal{Z}(v_4,u,T),
\end{multline}
where $W_+$ is defined in \eqref{W+def} and
$$\mathcal{Z}(v,u ,T )\eqdef \frac{M^{(j)}(v)}{\rho_j}.$$
Define 
\begin{equation}\notag\mathcal{P}(T;u)\eqdef \int_{\mathbb{R}^3}dv \int_\rth dv_4 \int_\stw d\omega\ W_+ (v,v_4;v_1,v_2)\mathcal{Z}(v,u,T)\mathcal{Z}(v_4,u,T).\end{equation}
Since $W_+$ is invariant under the following Galilean transformation $$W_+(v,v_4;v_1,v_2)=W_+(v+U,v_4+U;v_1+U,v_2+U)\text{ for any }U\in \rth,$$ we obtain
\begin{equation}\label{eq.H.reduction.final}
    H^{(2)}=-H^{(1)}
    =\rho_1^2 e^{-\frac{2\epsilon_0}{T}}\mathcal{P}(T;u)
    -\rho_1\rho_2\mathcal{P}(T;u)
    =(\rho_1^2 e^{-\frac{2\epsilon_0}{T}}
    -\rho_1\rho_2)\mathcal{P}(T;0).
\end{equation}
Regarding the radiation terms, we have
$$
    Q^{(1)}=-Q^{(2)}=\rho_2\int_{\mathbb{S}^2}dn
\ (1+G)-\rho_1\int_{\mathbb{S}^2}dn
\ G,
$$ by definition of $\mathcal{R}_p$.

In the velocity equations \eqref{Euler.nonLTE}$_3$, we first note that $\frac{1}{3}$ of the trace of $S^{(j)}$ is equal to the pressure $p^{(j)}=\frac{1}{2}\rho_jT $. Furthermore, we have
$$
  S^{(j)}= \int (v-u )\otimes (v-u )M^{(j)} dv
    =\frac{1}{3}I\int |v-u |^2M^{(j)} dv    =\frac{1}{2}\rho_jT  I=p^{(j)}I,
$$where $I$ is the 3 by 3 identity matrix.

This completes the derivation of the Euler-radiation limit in a scaling limit yielding a non-LTE. In the next subsection, we will study the existence of a stationary solution to the Euler system in the non-LTE situation.
\subsection{Stationary solutions}\label{sec.nonLTEexist}
In this subsection, we will study the stationary solution of zero macroscopic velocities to the Euler-like system coupled with radiation in the non-LTE situation that we have obtained in Section \ref{sec.Euler nonLTE}. Here the situation with the zero macroscopic velocity physically means that there is no convection and all the heat transfer takes place by means of radiation. Throughout this subsection, we will see that if $u=0$ then we recover LTE. 
We first recall the system \eqref{Euler.nonLTE} with the further reduction of each operator introduced in Section \ref{sec.representations}. Then we obtain the following time-independent stationary system with $u=0$:
\begin{equation}\notag
    \begin{split}
        &\sigma H^{(1)}+Q^{(1)}=-\sigma H^{(2)}-Q^{(2)}=0,\\
          &\nabla_y\cdot S^{(1)}=-\nabla_y\cdot S^{(2)}=0,\\
            &  \frac{4\epsilon_0}{3}Q^{(2)}=0,
    \end{split}
\end{equation}where the operators are represented as$$
   H^{(2)}=-H^{(1)}
    =(\rho_1^2 e^{-\frac{2\epsilon_0}{T}}
    -\rho_1\rho_2)\mathcal{P}(T;0),$$
$$
    Q^{(1)}=-Q^{(2)}=\rho_2\int_{\mathbb{S}^2}dn
\ (1+G)-\rho_1\int_{\mathbb{S}^2}dn
\ G,
$$ 
$$ S^{(j)}=p^{(j)} I=\frac{1}{2}\rho_jT I.$$
We also recall \eqref{eq.rescaled kinetic equations}$_2$ and \eqref{eq.Rr} and obtain the stationary equation for the radiation
\begin{equation}\notag
    n\cdot\nabla_y G =\epsilon_0\int_{\rth}[M^{(2)}(1+G)-M^{(1)} G]dv=\epsilon_0(\rho_2 (1+G)-\rho_1 G).
\end{equation}
Finally, we have the total mass conservation that 
\begin{equation}\notag
\int_{\Omega_y} dy\  (\rho_1+\rho_2)=m_0,    
\end{equation}for some given constant $m_0>0$ where $\Omega_y$ is the rescaled domain of $\Omega$ in the variable $y$.

Then since $Q^{(2)}=0,$ we have $H^{(1)}=H^{(2)}=0.$ Hence, we have
$$\rho_2=\rho_1e^{-\frac{2\epsilon_0}{T}},$$ and this yields back the LTE situation in Section \ref{sec.LTEEulerlimit} and Section \ref{sec.LTEexist}.  In particular, we have shown that the system is well-posed under the linearization and under the exponential-dependence limit. This completes the discussion of the stationary solutions to the non-LTE system with zero macroscopic velocities. A more complicated system yielding non-LTE steady states will be discussed in Section \ref{sec.nonLTE.3level}.

\section{A non-LTE system with different temperatures for the two-molecule states: Non existence of stationary solutions }\label{sec.nonLTEnonexist}
In this section, we will prove the non-existence of a stationary solution of zero macroscopic velocities to the Euler-like system coupled with radiation in the non-LTE situation under some additional conditions on the elastic collisions. More precisely, we will assume that the elastic collisions between the ground-state molecules and the excited-state molecules $\mathcal{K}^{(1,2)}_{el}$ and $\mathcal{K}^{(2,1)}_{el}$ in \eqref{eq.rescaled kinetic equations} are much smaller in the scale than the elastic collisions between the ground-states $\mathcal{K}^{(1,1)}_{el}$ so that the elastic collisions between different species are neglected.  We emphasize that we consider in this section the situations in which the elastic collisions $A+\bar{A}\leftrightarrows A+\bar{A}$ take place less often than the elastic collisions $A+A\leftrightarrows A+A$ and $\bar{A}+\bar{A}\leftrightarrows \bar{A}+\bar{A}$. In this case, the two different species can have different temperatures and velocities, since there is not enough mixture via the elastic collisions. Then the two types of molecules have the two different local Maxwellian equilibria $M^{(j)}$ for each type of molecules $j=1,2$ as follows:
\begin{equation}
    \label{different maxwellain equilibria}
    M^{(j)}= M^{(j)}(\rho_j,u_j,T_j)\eqdef \frac{c_0\rho_j}{T_j^{\frac{3}{2}}}\exp\left(-\frac{|v-u_j|^2}{T_j}\right),\ j=1,2.
\end{equation} Here the mass densities, macroscopic velocities, and temperatures are defined as
\begin{equation}\notag
    \begin{split}
        &\rho_j\eqdef \int_\rth F^{(j)}dv,\\
         &u_j\eqdef \frac{1}{\rho_j}\int_\rth v F^{(j)}dv,\text{ for }i=1,2,3,\\
         &T_j \eqdef \frac{2}{3\rho_j}\int_\rth |v-u_j |^2F^{(j)}dv,
    \end{split}\end{equation} with the Boltzmann constant equal to $\frac{1}{2}$.
  \subsection{Chapman-Enskog expansion and the Euler-radiation limit yielding non-LTE}
Then we will consider the Chapman-Enskog expansion \eqref{Chapman} on the equation \eqref{eq.rescaled kinetic equations} with $\sigma\eqdef \frac{\eta}{\alpha}$ as follows:
\begin{equation}
    \label{eq.chapman.euler.nonlte2}
    \Fj=M^{(j)}(1+\alpha f^{(j)}_1+\cdots ).
\end{equation}
As before, we will use the inner product \eqref{new L2 inner product}. 
Now we postulate the Chapman-Enskog expansion \eqref{eq.chapman.euler.nonlte2} in \eqref{eq.rescaled kinetic equations} in the limit $\alpha \to 0^+$ and $\sigma\approx 1$. Then the leading order equation in $\alpha$ is given by
\begin{multline*}
   \partial_t M+ v\cdot \nabla_y M 
   = L_{el}[M;f_1] +\sigma L_{non.el}[M]+\mathcal{R}_p[M,G]\\=\begin{pmatrix}L^{(1,1)}_{el}[M;f_1]+L^{(1,2)}_{el}[M;f_1]\\L^{(2,1)}_{el}[M;f_1]+
L^{(2,2)}_{el}[M;f_1]\end{pmatrix}+\sigma \begin{pmatrix}2 L_{1,1}[M]+ L_{1,2}^{(1)}[M]\\ L^{(2)}_{non.el}[M]\end{pmatrix}+\mathcal{R}_p[M,G],
\end{multline*}
where $M=(M^{(1)},M^{(2)})^\top,$ $\rho=(\rho_1,\rho_2)^\top,$ $u_i=(u_{1,i},u_{2,i})^\top\text{ for }i=1,2,3,$ and $T=(T_1,T_2)^\top,$ denote the equilibria, mass densities, macroscopic velocities, and temperatures, respectively, and the linear operators are defined as
\begin{equation}\notag
\begin{split}
L^{(i,j)}_{el}[M;f_1]&\eqdef\mathcal{K}^{(i,j)}_{el}[M^{(i)},M^{(j)}f_1^{(j)}]+\mathcal{K}^{(i,j)}_{el}[M^{(i)}f_1^{(i)},M^{(j)}],\\
L_{(1,1)}[M]&\eqdef\mathcal{K}_{(1,1)}[M,M],\\
L^{(1)}_{1,2}[M]&\eqdef\mathcal{K}^{(1)}_{1,2}[M,M],\text{ and }\\
L^{(2)}_{non.el}[M]&\eqdef\mathcal{K}^{(2)}_{non.el}[M,M].
\end{split}
\end{equation}
 By taking the inner product with respect to $v$ variable against $1, (v-u_j),$ and $\frac{4}{3}\left(\frac{|v-u_j|^2}{2}+\epsilon_0\delta_{j,2}\right)$ for each molecule $j=1,2$ separately, we obtain the following system of hydrodynamic equations for each species $j=1,2$:
\begin{equation}\label{Euler.nonLTE2}
    \begin{split}
        &\partial_t \rho_1+\nabla_y\cdot  (\rho_1u_1)=\sigma H^{(1)}+Q^{(1)},\\
         &\partial_t \rho_2+\nabla_y\cdot  (\rho_2u_2)= \sigma H^{(2)}+Q^{(2)},\\
          &\partial_t (\rho_1u_1)+\nabla_y\cdot  (\rho_1u_1\otimes u_1)+\nabla_y\cdot S^{(1)}=\sigma J_m^{(1)}+\Sigma^{(1)},\\
            &\partial_t (\rho_2u_2)+\nabla_y\cdot  (\rho_2u_2\otimes u_2)+\nabla_y\cdot S^{(2)}=\sigma J_m^{(2)}+\Sigma^{(2)},\\
        &\partial_t (\rho_1T_1)+\nabla_y\cdot  (\rho_1u_1T_1+J^{(1)}_q)= \sigma J^{(1)}_e+J^{(1)}_r,\\
            &\partial_t \left(\rho_2T_2+\frac{4}{3}\epsilon_0 \rho_2\right)+\nabla_y\cdot  \left(\rho_2u_2T_2+\frac{4}{3}\epsilon_0u_2\rho_2+J^{(2)}_q\right)=  \sigma J^{(2)}_e+J^{(2)}_r,
    \end{split}
\end{equation}where\begin{equation}\notag H^{(j)}\eqdef \int \Knj [M,M]dv,\end{equation}
\begin{equation}\notag Q^{(j)}\eqdef \int \Kradj [M,G]dv,\end{equation}
\begin{equation}\notag
    J_m^{(j)}\eqdef \int v\Knj[M,M]dv,
\end{equation}
\begin{equation}\notag
    \Sigma^{(j)}\eqdef \int v\Kradj[M,G]dv,
\end{equation}
\begin{equation}\notag
    S^{(j)}\eqdef \int (v-u_j)\otimes (v-u_j)M^{(j)} dv,
\end{equation}
\begin{equation}\notag
    J_q^{(j)}\eqdef \int \frac{4}{3}\left(\frac{|v-u_j|^2}{2}+\epsilon_0\delta_{j,2}\right)(v-u_j)M^{(j)} dv=0,
\end{equation}since the integrand is odd in $v$, and
\begin{equation}\notag
    J^{(j)}_e=\int \frac{4}{3}\left(\frac{|v-u_j|^2}{2}+\epsilon_0\delta_{j,2}\right) \Knj[M,M]dv,
\end{equation}
and
\begin{equation}\notag
    J^{(j)}_r\eqdef \int \frac{4}{3}\left(\frac{|v-u_j|^2}{2}+\epsilon_0\delta_{j,2}\right)\mathcal{R}_p^{(j)}[M,G]dv,
\end{equation}
for $j=1,2$ where we defined
$$\mathcal{K}^{(1)}_{non.el}[M,M]\eqdef 2 \mathcal{K}_{(1,1)}[M,M]+\mathcal{K}^{(1)}_{1,2}[M,M],$$ with all other notations given in \eqref{eq.Rp}-\eqref{Kntw}. 
Here note that the terms $H^{(1)}$, $H^{(2)}$, $Q^{(1)}$, $Q^{(2)}$, $\Sigma^{(1)}$, and $\Sigma^{(2)}$ do not vanish via the $L^2$ inner product as in Section \ref{sec.Euler limit}, since we take the $L^2$ inner product separately for each species $j=1,2$ (c.f. \eqref{eq.L2 inner product}).

In this situation, we will prove that the stationary solutions to the system \eqref{Euler.nonLTE2} with $u_1=u_2=0$ can exist only in a very limited situation with a specific incoming radiation. Otherwise, in a general situation with generic boundary conditions, we claim that there exists no stationary solutions with $u_1=u_2=0$.

\begin{remark}
We remark here that the assumptions that the elastic collisions between the ground-state and the excited-state $\mathcal{K}^{(1,2)}_{el}$ and $\mathcal{K}^{(2,1)}_{el}$ are much smaller in the scale than the elastic collisions between the ground-states $\mathcal{K}^{(1,1)}_{el}$ can be probably a bit artificial in actual physical situations. However, we introduce this special situation because the dynamics and the aftermath here is mathematically interesting.  
\end{remark}

 \subsection{Reduction of the operators} As we did in Section \ref{sec.representations}, we now reduce each operator in a general case when there are two different velocities $u_1$ and $u_2$ and two different temperatures $T_1$ and $T_2$. As before, since the total mass and momentum of molecules is conserved in the non-elastic collisions and the transfer of mass and momentum due to the radiation is also conserved, we have
$$ H^{(1)}=-H^{(2)},\ Q^{(1)}=-Q^{(2)},\ J^{(1)}_m=-J^{(2)}_m,\ \text{and}\ \Sigma^{(1)}=-\Sigma^{(2)},$$ and hence it is more convenient to use $-\mathcal{K}^{(2)}_{non.el}$ when we compute $ H^{(1)}$ and $J^{(1)}_m$. Here we reduce each operator of  \eqref{Euler.nonLTE} more explicitly below.

\subsubsection{Terms in the mass equations}\label{sec.reduction.H}
Regarding the equations for mass \eqref{Euler.nonLTE}$_1$ and \eqref{Euler.nonLTE}$_2$, we first observe that we can further write the operators as
\begin{multline*}
    H^{(2)}= -H^{(1)} =\int_{\mathbb{R}^3}dv\ \Kntw[M,M](v)\\
    =\rho_1^2 e^{-\frac{2\epsilon_0}{T_1}}\int_{\mathbb{R}^3}dv \int_\rth dv_4 \int_\stw d\omega\ W_+ (v,v_4;v_1,v_2)\mathcal{Z}(v,u_1,T_1)\mathcal{Z}(v_4,u_1,T_1)\\
    -\rho_1\rho_2\int_{\mathbb{R}^3}dv \int_\rth dv_4 \int_\stw d\omega\ W_+ (v,v_4;v_1,v_2)\mathcal{Z}(v,u_2,T_2)\mathcal{Z}(v_4,u_1,T_1),
\end{multline*}
where 
$$\mathcal{Z}(v,u_j,T_j)\eqdef \frac{M^{(j)}(v)}{\rho_j}=\frac{c_0}{T_j^{\frac{3}{2}}}\exp\left(-\frac{|v-u_j|^2}{T_j}\right),\ j=1,2.$$
Define 
\begin{equation}\label{eq.P}\mathcal{P}(T_k,T_l,u_k,u_l)\eqdef \int_{\mathbb{R}^3}dv \int_\rth dv_4 \int_\stw d\omega\ W_+ (v,v_4;v_1,v_2)\mathcal{Z}(v,u_k,T_k)\mathcal{Z}(v_4,u_l,T_l).\end{equation} Throughout the paper, we will simply denote
$$\mathcal{P}(T_k,T_l)\eqdef \mathcal{P}(T_k,T_l,0,0)\text{ and }\mathcal{P}(T_j)\eqdef \mathcal{P}(T_j,T_j). $$
Since $W_+$ is invariant under the following Galilean transformation $$W_+(v,v_4;v_1,v_2)=W_+(v+U,v_4+U;v_1+U,v_2+U)\text{ for any }U\in \rth,$$ we have
\begin{multline*}
    H^{(2)}=-H^{(1)}
    =\rho_1^2 e^{-\frac{2\epsilon_0}{T_1}}\mathcal{P}(T_1;u_1)
    -\rho_1\rho_2\mathcal{P}(T_2,T_1,u_2,u_1)
    =\rho_1^2 e^{-\frac{2\epsilon_0}{T_1}}\mathcal{P}(T_1)
    -\rho_1\rho_2\mathcal{P}(T_2,T_1,u_2-u_1,0).
\end{multline*}
Regarding the radiation terms, we have
$$
    Q^{(1)}=-Q^{(2)}=\rho_2\int_{\mathbb{S}^2}dn
\ (1+G)-\rho_1\int_{\mathbb{S}^2}dn
\ G,
$$ by definition of $\mathcal{R}_p$.

\subsubsection{Terms in the velocity equations}
In the velocity equations \eqref{Euler.nonLTE}$_3$ and \eqref{Euler.nonLTE}$_4$, we first note that $\frac{1}{3}$ of the trace of $S^{(j)}$ is equal to the pressure $p^{(j)}=\frac{1}{2}\rho_jT_j$. Furthermore, we have
$$
  S^{(j)}= \int (v-u_j)\otimes (v-u_j)M^{(j)} dv
    =\frac{1}{3}I\int |v-u_j|^2M^{(j)} dv    =\frac{1}{2}\rho_jT_j I=p^{(j)}I,
$$where $I$ is the 3 by 3 identity matrix.
In addition, note that
\begin{multline*}
    J^{(2)}_m=-J^{(1)}_m=\int_\rth v\mathcal{K}^{(2)}_{non.el}[M,M]dv\\
    =\rho_1^2 e^{-\frac{2\epsilon_0}{T_1}}\int_{\mathbb{R}^3}vdv \int_\rth dv_4 \int_\stw d\omega\ W_+ (v,v_4;v_1,v_2)\mathcal{Z}(v,u_1,T_1)\mathcal{Z}(v_4,u_1,T_1)\\
    -\rho_1\rho_2\int_{\mathbb{R}^3}vdv \int_\rth dv_4 \int_\stw d\omega\ W_+ (v,v_4;v_1,v_2)\mathcal{Z}(v,u_2,T_2)\mathcal{Z}(v_4,u_1,T_1)\\
   =(\rho_1^2 e^{-\frac{2\epsilon_0}{T_1}}
    -\rho_1\rho_2)\mathcal{I}
    +u_1\rho_1^2 e^{-\frac{2\epsilon_0}{T_1}}\mathcal{P}(T_1;u_1)
    -u_2\rho_1\rho_2\mathcal{P}(T_2,T_1,u_2,u_1) ,
\end{multline*}due to the Galilean invariance of the collision kernel
where \begin{equation}\label{eq.I}\mathcal{I}\eqdef \int_{\mathbb{R}^3}vdv \int_\rth dv_4 \int_\stw d\omega\ W_+ (v,v_4;v_1,v_2)\mathcal{Z}(v,0,T_1)\mathcal{Z}(v_4,0,T_1).\end{equation} We note that $\mathcal{I}=0$. This is because $\mathcal{I}$ is a vector and $\mathcal{Z}(v,0,T_1)$ and $\mathcal{Z}(v_4,0,T_1)$ are invariant under rotations $v\mapsto Rv$ and $v_4\mapsto Rv_4$ with $R\in SO(3)$ and hence $R\mathcal{I}=\mathcal{I}$ for any $R\in O(3).$ Therefore, $\mathcal{I}=0.$

Regarding the radiation terms, we have
\begin{multline*}
    \Sigma^{(1)}=-\Sigma^{(2)}=\int_\rth v\mathcal{R}_p^{(1)}[M,G]dv=\int_\rth vdv\int_{\mathbb{S}^2}dn
\ [M^{(2)}(1+G)-M^{(1)}G]\\
=\rho_2u_2\int_{\mathbb{S}^2}dn
\ (1+G)-\rho_1u_1\int_{\mathbb{S}^2}dn
\ G.\end{multline*}This expression physically means that a molecule changes its state not modifying its momentum because the momentum of a photon is assumed to be zero. 
\subsubsection{Terms in the temperature equations}
We now consider the equations for the temperatures \eqref{Euler.nonLTE}$_5$ and  \eqref{Euler.nonLTE}$_6$. In order to see the relationship between $J^{(1)}_e$ and $J^{(2)}_e$, we first recall the conservation of total energy \eqref{EnergyInel} during each  non-elastic collision. The conservation of energy implies that for a general distribution $F$, we have
$$\int_\rth \frac{1}{2}|v|^2\mathcal{K}^{(1)}_{non.el}[F,F]dv + \int_\rth \left(\frac{1}{2}|v|^2+\epsilon_0\right)\mathcal{K}^{(2)}_{non.el}[F,F]dv =0,$$ for \eqref{eq.knonel1} and \eqref{eq.knonel2}.  Then we have
\begin{multline*}
   0= \int_\rth\frac{1}{2}|v-u_1+u_1|^2\mathcal{K}^{(1)}_{non.el}[M,M]dv + \int_\rth \left(\frac{1}{2}|v-u_2+u_2|^2+\epsilon_0\right)\mathcal{K}^{(2)}_{non.el}[M,M]dv \\
   =\frac{3}{4}J^{(1)}_e + \frac{3}{4}J^{(2)}_e-u_1\cdot J^{(2)}_m + u_2\cdot J^{(2)}_m
   +\frac{1}{2}|u_1|^2H^{(2)} -\frac{1}{2}|u_2|^2H^{(2)}.
\end{multline*}
Therefore we have
\begin{equation}\label{eq.relationJ1eJ2e}
    \frac{3}{4}(J^{(1)}_e + J^{(2)}_e)=J^{(2)}_m\cdot (u_1-u_2)-\frac{1}{2}H^{(2)}(|u_1|^2-|u_2|^2).
\end{equation}
This also implies that if $u_1=u_2,$ $J^{(1)}_e=-J^{(2)}_e.$
For the representation of the terms $J^{(1)}_e,\ J^{(2)}_e$, we now observe that
\begin{multline*}
  \int_\rth |v-u_2|^2\mathcal{K}^{(2)}_{non.el}[M,M]dv
    =|u_2-u_1|^2\rho_1^2 e^{-\frac{2\epsilon_0}{T_1}}\mathcal{P}(T_1)
   \\ +\rho_1^2 e^{-\frac{2\epsilon_0}{T_1}}\int_{\mathbb{R}^3}|v|^2dv \int_\rth dv_4 \int_\stw d\omega\ W_+ (v,v_4;v_1,v_2)\mathcal{Z}(v,0,T_1)\mathcal{Z}(v_4,0,T_1)\\
    -\rho_1\rho_2\int_{\mathbb{R}^3}|v|^2dv \int_\rth dv_4 \int_\stw d\omega\ W_+ (v,v_4;v_1,v_2)\mathcal{Z}(v,0,T_2)\mathcal{Z}(v_4,u_1-u_2,T_1).
\end{multline*}
Therefore, we have
\begin{multline}
  J^{(2)}_e = \frac{2}{3}\bigg[  |u_2-u_1|^2\rho_1^2 e^{-\frac{2\epsilon_0}{T_1}}\mathcal{P}(T_1)
   \\ +\rho_1^2 e^{-\frac{2\epsilon_0}{T_1}}\int_{\mathbb{R}^3}|v|^2dv \int_\rth dv_4 \int_\stw d\omega\ W_+ (v,v_4;v_1,v_2)\mathcal{Z}(v,0,T_1)\mathcal{Z}(v_4,0,T_1)\\
    -\rho_1\rho_2\int_{\mathbb{R}^3}|v|^2dv \int_\rth dv_4 \int_\stw d\omega\ W_+ (v,v_4;v_1,v_2)\mathcal{Z}(v,0,T_2)\mathcal{Z}(v_4,u_1-u_2,T_1)\bigg]
    + \frac{4}{3}\epsilon_0 H^{(2)}.
\end{multline}
Then using \eqref{eq.relationJ1eJ2e}, we can also recover the representation for $J^{(1)}_e$.

Then in the stationary situation when $u_1=u_2=0,$ we have
\begin{multline}
  J^{(2)}_e = \frac{2}{3}\bigg[  \rho_1^2 e^{-\frac{2\epsilon_0}{T_1}}\int_{\mathbb{R}^3}|v|^2dv \int_\rth dv_4 \int_\stw d\omega\ W_+ (v,v_4;v_1,v_2)\mathcal{Z}(v,0,T_1)\mathcal{Z}(v_4,0,T_1)\\
    -\rho_1\rho_2\int_{\mathbb{R}^3}|v|^2dv \int_\rth dv_4 \int_\stw d\omega\ W_+ (v,v_4;v_1,v_2)\mathcal{Z}(v,0,T_2)\mathcal{Z}(v_4,0,T_1)\bigg]
    + \frac{4}{3}\epsilon_0 H^{(2)}\\
    = \frac{4}{3}\bigg[(\rho_1^2 e^{-\frac{2\epsilon_0}{T_1}}-\rho_2\rho_1)\mathcal{A}(T_1;\epsilon_0)+\rho_1\rho_2\mathcal{B}(T_1,T_2)\bigg],
\end{multline}
where we defined and used
\begin{equation}\label{eq.A}
    \mathcal{A}(T_1;\epsilon_0)\eqdef \int_{\mathbb{R}^3}\left(\frac{|v|^2}{2}+\epsilon_0\right)dv\int_\rth dv_4 \int_\stw d\omega\ W_+ (v,v_4;v_1,v_2)\mathcal{Z}(v,0,T_1)\mathcal{Z}(v_4,0,T_1)
\end{equation}and
\begin{equation}\label{eq.B}
    \mathcal{B}(T_1,T_2)\eqdef \int_{\mathbb{R}^3}\frac{|v|^2}{2}dv \int_\rth dv_4 \int_\stw d\omega\ W_+ (v,v_4;v_1,v_2)\mathcal{Z}(v_4,0,T_1) (\mathcal{Z}(v,0,T_1)-\mathcal{Z}(v,0,T_2)).
\end{equation}
We can further simplify $\mathcal{B}$ as
\begin{multline*}
    \mathcal{B}(T_1,T_2)= \int_{\mathbb{R}^3}dv\frac{|v|^2}{2}(\mathcal{Z}(v,0,T_1)-\mathcal{Z}(v,0,T_2)) \int_\rth dv_4 \int_\stw d\omega\ W_+ (v,v_4;v_1,v_2)\mathcal{Z}(v_4,0,T_1)\\
  =  \int_{\mathbb{R}^3}dv\frac{|v|^2}{2}(\mathcal{Z}(v,0,T_1)-\mathcal{Z}(v,0,T_2))Q,
\end{multline*}
where $$Q\eqdef \int_\rth dv_4 \int_\stw d\omega\ W_+ (0,v_4-v;v_1-v,v_2-v)\mathcal{Z}(v_4-v,-v,T_1).$$
This representation can be interpreted as follows. The transfer of energy due to the non-elastic collisions are affected by both the non-LTE difference in the ratio of densities and the difference of temperatures. These are related since there are different temperatures due to the absence of the LTE.

We now examine the radiation terms in the energy equations. We recall the definition of the radiation term $\mathcal{R}_p$ from \eqref{eq.Rp} and obtain that
$$
      J^{(j)}_r=\frac{4}{3}\int_\rth dv \left(\frac{|v-u_j|^2}{2}+\epsilon_0\delta_{j,2}\right)(-1)^{j+1}\int_{\mathbb{S}^2}dn\  [M^{(2)}(1+G)-M^{(1)}G].
$$Therefore, we have
\begin{multline*}
    J^{(1)}_r =\frac{4}{3}\int_\rth dv \frac{|v-u_1|^2}{2}\int_{\mathbb{S}^2}dn\  [M^{(2)}(1+G)-M^{(1)}G]\\
=    \frac{4}{3}\int_{\mathbb{S}^2}dn\ (1+G)\int_\rth dv \frac{|v-u_2+u_2-u_1|^2}{2}M^{(2)}+\frac{4}{3}\int_{\mathbb{S}^2}dn\ G\int_\rth dv \frac{|v-u_1|^2}{2}M^{(1)}\\
 = \rho_2 T_2 \int_{\mathbb{S}^2 } (1+G)dn+\frac{2}{3}|u_2-u_1|^2 \rho_2 \int_{\mathbb{S}^2 } (1+G)dn-\rho_1T_1 \int_{\mathbb{S}^2 } Gdn,
\end{multline*} and
\begin{multline*}\notag
    J^{(2)}_r 
    =\frac{4}{3}\int_\rth dv \frac{|v-u_2|^2}{2}\int_{\mathbb{S}^2}dn\  [M^{(1)}G-M^{(2)}(1+G)]\\
    +\frac{4\epsilon_0}{3}\int_\rth dv\int_{\mathbb{S}^2}dn\  [M^{(1)}G-M^{(2)}(1+G)]\\
    = \rho_1T_1 \int_{\mathbb{S}^2 } Gdn+\frac{2}{3}|u_1-u_2|^2 \rho_1 \int_{\mathbb{S}^2 } Gdn-\rho_2 T_2 \int_{\mathbb{S}^2 } (1+G)dn
    +\frac{4\epsilon_0}{3}Q^{(2)}.
\end{multline*}

\begin{remark}
The new functions $H^{(j)}$, $J^{(j)}_m,$ and $J^{(j)}_e$  which describe the transfer of mass, momentum and energy between the two types of molecules $A$ and $\bar{A}$, respectively, depend on the specific form of the collision kernel. In particular, their dependence cannot be obtained using only equilibrium properties of the system.
\end{remark}

\subsection{Stationary equations}
We recall the system \eqref{Euler.nonLTE2} with the further reduction of each operator introduced in Section \ref{sec.representations}. Then we obtain the following time-independent stationary system with $u_1=u_2=0$:
\begin{equation}\label{Euler.nonLTE.stationary}
    \begin{split}
        &\sigma H^{(1)}+Q^{(1)}=-\sigma H^{(2)}-Q^{(2)}=0,\\
          &\nabla_y\cdot S^{(1)}=-\nabla_y\cdot S^{(2)}=0,\\
        & \sigma J^{(1)}_e+J^{(1)}_r=0,\\
            &  \sigma J^{(2)}_e+J^{(2)}_r=0,
    \end{split}
\end{equation}where the operators are represented as$$
   H^{(2)}=-H^{(1)}
    =\rho_1^2 e^{-\frac{2\epsilon_0}{T_1}}\mathcal{P}(T_1)
    -\rho_1\rho_2\mathcal{P}(T_2,T_1),$$
$$
    Q^{(1)}=-Q^{(2)}=\rho_2\int_{\mathbb{S}^2}dn
\ (1+G)-\rho_1\int_{\mathbb{S}^2}dn
\ G,
$$ 
$$ S^{(j)}=p^{(j)} I=\frac{1}{2}\rho_jT_jI,$$
$$J^{(1)}_e=-J^{(2)}_e=- \frac{4}{3}\bigg[(\rho_1^2 e^{-\frac{2\epsilon_0}{T_1}}-\rho_2\rho_1)\mathcal{A}(T_1;\epsilon_0)+\rho_1\rho_2\mathcal{B}(T_1,T_2)\bigg],$$
$$J^{(1)}_r= -\rho_1T_1 \int_{\mathbb{S}^2 } Gdn+\rho_2 T_2 \int_{\mathbb{S}^2 } (1+G)dn,$$
$$J^{(2)}_r= \rho_1T_1 \int_{\mathbb{S}^2 } Gdn-\rho_2 T_2 \int_{\mathbb{S}^2 } (1+G)dn
    +\frac{4\epsilon_0}{3}Q^{(2)},$$
 where $\mathcal{P}$, $\mathcal{A}$, and $\mathcal{B}$ are defined in \eqref{eq.P},  \eqref{eq.A}, and \eqref{eq.B}, respectively, and we recall $\mathcal{P}(T_i,T_j)\eqdef \mathcal{P}(T_i,T_j,0,0)$. We recall that the functions $\mathcal{P}$, $\mathcal{A}$, and $\mathcal{B}$ depend on the form of the collision kernel. 
We also recall \eqref{eq.rescaled kinetic equations}$_2$ and \eqref{eq.Rr} and obtain the equation for the radiation
\begin{equation}\label{eq.radiation.nonLTE.stationary}
    n\cdot\nabla_y G =\epsilon_0\int_{\rth}[M^{(2)}(1+G)-M^{(1)} G]dv=\epsilon_0(\rho_2 (1+G)-\rho_1 G).
\end{equation}
Finally, we have the total mass conservation that 
\begin{equation}\label{masscon}
\int_{\Omega_y} dy\  (\rho_1+\rho_2)=m_0,    
\end{equation}for some given constant $m_0>0,$ where $\Omega_y$ is the rescaled domain of $\Omega$ in the variable $y$.

 \subsection{Non-existence for generic boundary values}
In this subsection, we prove the non-existence of a solution to the system \eqref{Euler.nonLTE.stationary}, \eqref{eq.radiation.nonLTE.stationary}, and \eqref{masscon} with the boundary condition \eqref{eq.bc for 3d new}. Before we state our theorem, we first define several auxiliary quantities $H=H(T_1,T_2)$, $L=L(T_1,T_2)$ and $S=S(T_1,T_2)$ as follows:
\begin{equation}\label{Hdef}
    H(T_1,T_2)\eqdef \frac{T_2}{T_1}e^{-\frac{2\epsilon_0}{T_1}}\frac{\mathcal{P}(T_1)}{\mathcal{P}(T_2,T_1)},
 \end{equation}
 \begin{equation}
     \label{Sdef} S(T_1,T_2)\eqdef \frac{\frac{4\pi\frac{H(T_1,T_2)}{T_1T_2}(T_2-T_1)}{\frac{1}{T_1}-\frac{H(T_1,T_2)}{T_2}}}{ \frac{4\sigma}{3}\frac{1}{T_1}\bigg[(\frac{e^{-\frac{2\epsilon_0}{T_1}}}{T_1} -\frac{H(T_1,T_2)}{T_2})\mathcal{A}(T_1;\epsilon_0)+\frac{H(T_1,T_2)}{T_2}\mathcal{B}(T_1,T_2)\bigg]},
 \end{equation}
\begin{equation}
    \label{Ldef}
    L(T_1,T_2)\eqdef S(T_1,T_2)\left(\frac{1}{T_1}+\frac{H(T_1,T_2)}{T_2}\right),
\end{equation}where $\mathcal{P}$, $\mathcal{A}$, and $\mathcal{B}$ are defined in \eqref{eq.P}, \eqref{eq.A}, and \eqref{eq.B}.

Now we state the non-existence theorem:
\begin{theorem}\label{thm.nonexist}Let $m_0>0$ given and let $\Omega$ be a bounded convex domain with $\partial \Omega \in C^1.$  Assume that $L(T_1,T_2)=\frac{m_0}{|\Omega|}$ defines a smooth curve in the plane $(T_1,T_2)\in \mathbb{R}^2_+$ for given $m_0$ and $\Omega,$ where $L$ is defined as in \eqref{Ldef}. For each boundary point $y_0\in \partial \Omega$, define the outward unit normal vector $\nu_{y_0}$. Then define the incoming boundary condition for $G(y_0,n)$ for incoming velocities $n\in \mathbb{S}^2$ with $n\cdot \nu_{y_0}<0$ as 
\begin{equation}\label{eq.bc for 3d new}
    G(y_0,n)= f(n),
\end{equation}for a given boundary profile $f.$
Then the system of the Euler-like system coupled with radiation \eqref{Euler.nonLTE.stationary}, \eqref{eq.radiation.nonLTE.stationary}, and  \eqref{masscon} in the non-LTE case with the boundary condition \eqref{eq.bc for 3d new} does not have a solution unless the given boundary profile $f$ is chosen specifically so that it satisfies
\begin{equation}\label{nonex condition}div\left(\int_{\mathbb{S}^2}nf(n)e^{-A_2s(y,n)}dn\right)=0,\\
\text{ for any }y\in \Omega\text{ and for some }A_2>0,\end{equation}
where for each $y\in \Omega$ and $n\in\mathbb{S}^2$,  $y_0=y_0(y,n)\in \partial \Omega$ and $s=s(y,n)$ are determined uniquely such that
    $$y=y_0(y,n)+s(y,n)n.$$
\end{theorem}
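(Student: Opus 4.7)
The plan is to convert the stationary system into pointwise algebraic relations on $(\rho_1,\rho_2,T_1,T_2,G)$ that force the thermodynamic variables to be spatially constant, and then to read off the remaining constraint on the boundary profile $f$ directly from the radiation equation. With $u_1 = u_2 = 0$, the momentum equations $\nabla_y\cdot S^{(j)} = 0$ in \eqref{Euler.nonLTE.stationary} immediately give that the two partial pressures $p_j = \frac{1}{2}\rho_j T_j$ are spatial constants. Adding the two energy equations and invoking the identity $J^{(1)}_e + J^{(2)}_e = 0$, which holds when $u_1 = u_2 = 0$ by \eqref{eq.relationJ1eJ2e}, together with the cancellation $J^{(1)}_r + J^{(2)}_r = \tfrac{4\epsilon_0}{3} Q^{(2)}$ that can be read off from the explicit formulas for $J^{(j)}_r$ derived in the subsection on temperature-equation terms, one forces $Q^{(2)} = 0$. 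The mass equation $\sigma H^{(2)} + Q^{(2)} = 0$ then gives $H^{(2)} = 0$, and the explicit form of $H^{(2)}$ from Section \ref{sec.reduction.H} translates via $\rho_j = p_j/T_j$ to the pointwise relation $p_2 = p_1\, H(T_1,T_2)$.

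Next, the vanishing $Q^{(1)} = 0$ reads $\rho_2(4\pi + \tilde G) = \rho_1 \tilde G$ with $\tilde G := \int_{\mathbb{S}^2} G\, dn$, yielding $\tilde G = 4\pi G_0$, where $G_0 := \rho_2/(\rho_1 - \rho_2)$. Substituting this into the remaining energy equation $\sigma J^{(1)}_e + J^{(1)}_r = 0$ and eliminating $\rho_1,\rho_2$ in favour of $p_1,p_2,T_1,T_2$, one obtains after a direct algebraic simplification the identity $p_1 = S(T_1,T_2)$. Since $p_1$ and the ratio $p_2/p_1$ are spatial constants, the two scalar fields $S(T_1(y),T_2(y))$ and $H(T_1(y),T_2(y))$ are constant on $\Omega$, so $(T_1(y),T_2(y))$ lies in the intersection of two level sets in the $(T_1,T_2)$-plane. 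The transversality implicit in the smoothness hypothesis on the curve $\{L = m_0/|\Omega|\}$ then forces $T_1,T_2$ to be constants consistent with the mass conservation \eqref{masscon}, and hence $\rho_1,\rho_2$ are constants as well.

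With all thermodynamic parameters constants, the radiation equation \eqref{eq.radiation.nonLTE.stationary} reduces to the linear transport equation $n\cdot \nabla_y G = A_2(G_0 - G)$ with $A_2 := \epsilon_0(\rho_1 - \rho_2)$. Solving along characteristics using the incoming boundary condition $G(y_0,n) = f(n)$ gives the explicit formula $G(y,n) = G_0 + (f(n) - G_0)\, e^{-A_2 s(y,n)}$. The compatibility requirement $Q^{(1)} = 0$, equivalently $\tilde G = 4\pi G_0$, must hold pointwise in $\Omega$; integrating the radiation equation over $\mathbb{S}^2$ shows this is the same as $\nabla_y\cdot \int_{\mathbb{S}^2} n G\, dn = 0$, and inserting the explicit formula for $G$ together with the identity $n\cdot \nabla_y s(y,n) = 1$ reduces it to the divergence constraint \eqref{nonex condition} on $f$ stated in the theorem. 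Since a generic incoming profile $f$ does not satisfy such a nonlocal integral identity at every $y\in\Omega$, the stationary system fails to admit a solution with $u_1 = u_2 = 0$.

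The principal obstacle lies in the second step: proving that $T_1$ and $T_2$ must be spatial constants. This requires establishing the functional independence of the maps $S$ and $H$ on the level curve $\{L = m_0/|\Omega|\}$, which in turn depends on a careful analysis of the integrals $\mathcal{P},\mathcal{A},\mathcal{B}$ appearing in their definitions; the non-degeneracy hypothesis on $L$ in the theorem is what ensures the transverse intersection generically occurs at isolated points. A secondary technical step is verifying the positivity $\rho_1 > \rho_2$ so that $A_2 > 0$ and $G_0 > 0$, ensuring the characteristic solution of the radiation equation is well-defined and bounded and the pseudo-Planck relation $\tilde G = 4\pi G_0$ is physically meaningful.
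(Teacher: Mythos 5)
Your proposal follows the paper's own proof essentially step for step: adding the two energy equations (using $J^{(1)}_e+J^{(2)}_e=0$ and $J^{(1)}_r+J^{(2)}_r=\tfrac{4\epsilon_0}{3}Q^{(2)}$) to force $Q^{(2)}=0$ and hence $H^{(2)}=0$, using the constant partial pressures to reduce to the algebraic relations $C_2/C_1=H(T_1,T_2)$ and $C_1=S(T_1,T_2)$ which together with the level-curve hypothesis on $L$ pin $(T_1,T_2)$ to constants, and then solving the resulting constant-coefficient radiation equation along characteristics to extract a divergence constraint on $f$; you are also right that the discreteness/transversality step is the least rigorous point, in the paper as much as in your write-up. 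The one remark worth making is that your cleaner final computation (via $n\cdot\nabla_y s=1$ instead of the paper's delta-function change of variables) actually yields the condition $\int_{\mathbb{S}^2}(f(n)-G_0)e^{-A_2 s(y,n)}\,dn=0$, i.e.\ $div\int_{\mathbb{S}^2}nf(n)e^{-A_2 s(y,n)}\,dn=-A_1\int_{\mathbb{S}^2}e^{-A_2 s(y,n)}\,dn$ rather than literally \eqref{nonex condition} — the same slippage occurs in the paper's \eqref{final eq nonexist}, where the change of variables produces an integral over $\Omega$ rather than $\rth$, and it does not affect the non-existence conclusion for generic $f$.
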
 
\begin{remark}
The condition \eqref{nonex condition} for the function $f(n)$ is very restrictive. Therefore there are no solutions for generic boundary conditions with $u=0.$ It is still possible that the system could have stationary solutions with $u\ne 0,$ but we will not consider these solutions in this paper.
\end{remark}
\begin{proof}We suppose on the contrary that a solution $(\rho_1,\rho_2,T_1,T_2,G)$ for the system \eqref{Euler.nonLTE.stationary}, \eqref{eq.radiation.nonLTE.stationary}, and \eqref{masscon} exists. We will then consider the equations that the solution must satisfy. 
We start with adding \eqref{Euler.nonLTE.stationary}$_3$ and \eqref{Euler.nonLTE.stationary}$_4$ and obtaining that
\begin{equation}\label{Q2zero}\frac{4\epsilon_0}{3}Q^{(2)}=0.\end{equation} Thus we have
\begin{equation}\label{eq.Gint}\int_{\mathbb{S}^2}G\ dn=\frac{4\pi\rho_2}{\rho_1-\rho_2}.\end{equation}
Then using \eqref{Q2zero} and \eqref{eq.radiation.nonLTE.stationary}, we have
\begin{equation}\label{integral of Gflow zero}
    \nabla_y \cdot \int_{\mathbb{S}^2}nGdn=0.
\end{equation}
 Since $\sigma H^{(2)}=0$, we have
 $$\rho_1e^{-\frac{2\epsilon_0}{T_1}}\mathcal{P}(T_1)=\rho_2\mathcal{P}(T_2,T_1).$$
 By \eqref{Euler.nonLTE.stationary}$_2$, we also have
 $\nabla_y p^{(1)}=0$ and $\nabla_y(p^{(1)}+p^{(2)})=0.$ Finally, \eqref{Euler.nonLTE.stationary}$_3$ implies that
 $$\frac{4\sigma}{3}\bigg[(\rho_1^2 e^{-\frac{2\epsilon_0}{T_1}}-\rho_2\rho_1)\mathcal{A}(T_1;\epsilon_0)+\rho_1\rho_2\mathcal{B}(T_1,T_2)\bigg]=\frac{4\pi\rho_1\rho_2(T_2-T_1)}{\rho_1-\rho_2}.$$
Therefore we have reduced the system and obtained the following system below:
 \begin{equation}\label{reduced nonLTE stationary}
     \begin{split}
     &\rho_1e^{-\frac{2\epsilon_0}{T_1}}\mathcal{P}(T_1)=\rho_2\mathcal{P}(T_2,T_1),\\
         &\nabla_y p^{(1)}=\nabla_y p^{(2)}=0,\\ &\frac{4\sigma}{3}\bigg[(\rho_1^2 e^{-\frac{2\epsilon_0}{T_1}}-\rho_2\rho_1)\mathcal{A}(T_1;\epsilon_0)+\rho_1\rho_2\mathcal{B}(T_1,T_2)\bigg]=\frac{4\pi\rho_1\rho_2(T_2-T_1)}{\rho_1-\rho_2},\\
         &n\cdot\nabla_y G =\epsilon_0\int_{\rth}[M^{(2)}(1+G)-M^{(1)} G]dv=\epsilon_0(\rho_2 (1+G)-\rho_1 G).
     \end{split}
 \end{equation}
 Here the functions $\mathcal{P},\mathcal{A},\mathcal{B}$ depend on the non-elastic collision kernel, and the absence of detailed balance plays a role. Moreover, we have $\mathcal{B}(T_1,T_1;\epsilon_0)=0$ and this implies that the term $\mathcal{B}$ is related to the transfer of energy due to the difference of temperatures.

 Then by \eqref{reduced nonLTE stationary}$_2$, we have
 \begin{equation}\label{rhoinT}\rho_1=\frac{C_1}{T_1}\text { and }\rho_2=\frac{C_2}{T_2},\end{equation} for some constant $C_1,C_2>0.$ Then by plugging this into \eqref{reduced nonLTE stationary}$_3$, we have
 \begin{equation}
     \label{prephi}
     \frac{4\sigma}{3}\frac{C_1}{T_1}\bigg[(\frac{C_1}{T_1} e^{-\frac{2\epsilon_0}{T_1}}-\frac{C_2}{T_2})\mathcal{A}(T_1;\epsilon_0)+\frac{C_2}{T_2}\mathcal{B}(T_1,T_2)\bigg]=\frac{4\pi\frac{C_1C_2}{T_1T_2}(T_2-T_1)}{\frac{C_1}{T_1}-\frac{C_2}{T_2}}.
 \end{equation}
 Also, by \eqref{masscon}, we have 
 \begin{equation}
     \label{masscon2}
     \int_\Omega dy\left(\frac{C_1}{T_1}+\frac{C_2}{T_2}\right)=m_0.
 \end{equation}
 In addition, by \eqref{reduced nonLTE stationary}$_1$, we have
 \begin{equation}
     \label{third eq}
   \mathcal{C}\eqdef \frac{C_2}{C_1}=  \frac{T_2}{T_1}e^{-\frac{2\epsilon_0}{T_1}}\frac{\mathcal{P}(T_1)}{\mathcal{P}(T_2,T_1)}=H(T_1,T_2).
 \end{equation}
 Then by plugging \eqref{third eq} into \eqref{prephi} we have
 \begin{equation}
     \label{C1eq}
     C_1= \frac{\frac{4\pi\frac{\mathcal{C}}{T_1T_2}(T_2-T_1)}{\frac{1}{T_1}-\frac{\mathcal{C}}{T_2}}}{ \frac{4\sigma}{3}\frac{1}{T_1}\bigg[(\frac{e^{-\frac{2\epsilon_0}{T_1}}}{T_1} -\frac{\mathcal{C}}{T_2})\mathcal{A}(T_1;\epsilon_0)+\frac{\mathcal{C}}{T_2}\mathcal{B}(T_1,T_2)\bigg]}=S(T_1,T_2).
 \end{equation} Then for each $C_1$ and $C_2$, $S$ and $H$ determine a discrete set of values $T_1$ and $T_2$, and they are independent of $y$. Then using \eqref{masscon2}, we consider
 \begin{equation}\notag L(T_1,T_2)=S(T_1,T_2)\left(\frac{1}{T_1}+\frac{H(T_1,T_2)}{T_2}\right)=\frac{m_0}{|\Omega|},\end{equation} for given $m_0$ and $\Omega$. Since $L=\frac{m_0}{|\Omega|}$ defines a smooth curve by the assumption, we can parametrize the values as
 \begin{equation}\label{TfunctionofC1}T_1=T_1(\tau),\ T_2=T_2(\tau),\ \tau\in I_L\subset \mathbb{R},\end{equation} which also implies 
  $$C_1=C_1(\tau),\ C_2=C_2(\tau),\ \tau\in I_L\subset \mathbb{R}.$$
 Also, $\rho_1=\frac{C_1}{T_1}(\tau)$ and $\rho_2=\frac{C_2}{T_2}(\tau)$ must live on $\rho_1>\rho_2$, since \eqref{eq.Gint} and $G\ge 0$ imply $\rho_1>\rho_2$.
 
 Now we consider the equation \eqref{reduced nonLTE stationary}$_4$. By \eqref{rhoinT}, \eqref{third eq}, and  \eqref{TfunctionofC1}, we have
 $$n\cdot\nabla_y G =\left(A_1-A_2 G\right),$$ where $A_1$ and $A_2$ are constant that depends only on $\tau$ and $\epsilon_0$ as  $A_1=A_1(\tau;\epsilon_0)>0$ and $ A_2=A_2(\tau;\epsilon_0)>0.$ Then we solve this with the boundary condition \eqref{eq.bc for 3d new}. If we are given $y\in \Omega$ and $n\in\mathbb{S}^2$, there exist unique $y_0=y_0(y,n)\in \partial \Omega$ and $s=s(y,n)$ such that
    $$y=y_0(y,n)+s(y,n)n,$$ and 
    $$G(y,n)=f(n)e^{-A_2s(y,n)}+A_1\int_0^{s(y,n)} e^{-A_2(s(y,n)-\xi)}d\xi.$$
    Define the constant flux $J$ as 
    $$J\eqdef \int_{\mathbb{S}^2}dn\ nG.$$ Recall that $J$ is constant due to \eqref{integral of Gflow zero}. 
    Then we further have
    \begin{equation*}
    J= \vec{R}(y)+A_1\int_{\mathbb{S}^2}n\left(\int_0^{s(y,n)} e^{-A_2(s(y,n)-\xi)}d\xi\right)dn,
    \end{equation*}where 
    $$\vec{R}(y)\eqdef \int_{\mathbb{S}^2}nf(n)e^{-A_2s(y,n)}dn.$$
    Thus, we have
    \begin{equation}\label{eq.div.JR2}
    0= div(\vec{R})+div\int_{\mathbb{S}^2}nA_1\int_0^{s(y,n)} e^{-A_2(s(y,n)-\xi)}dn.
    \end{equation}
    Here we further observe that
  $$
        \int_{\mathbb{S}^2}nA_1\left(\int_0^{s(y,n)} e^{-A_2(s(y,n)-\xi)}d\xi\right)dn
        =\int_{\partial\Omega}dz\int_{\mathbb{S}^2}n\left(\int_0^{s(y,n)} e^{-A_2(s(y,n)-\xi)}A_1\delta(z-y_0(y,n))d\xi\right)dn.$$
    We make a change of variables $\xi\mapsto \hat{\xi}=s(y,n)-\xi.$ Then we have$$
        \int_{\mathbb{S}^2}nA_1\left(\int_0^{s(y,n)} e^{-A_2(s(y,n)-\xi)}d\xi\right)dn
        =\int_{\partial\Omega}dz\int_{\mathbb{S}^2}nA_1\left(\int_0^{s(y,n)} e^{-A_2\hat{\xi}}\delta(z-y_0(y,n))d\hat{\xi}\right)dn.$$
    Then we make another change of variables $(
    \hat{\xi},n)\mapsto \eta\eqdef y-\hat{\xi} n \in \Omega$. Since $y$ is independent of $\hat{\xi}$ and $n$, we obtain the Jacobian of the change of variables $J(\eta,n)\eqdef \left|\frac{\partial(\hat{\xi},n)}{\partial \eta}\right|=\frac{1}{|y-\eta|^2}$. Thus, for $n=n(y-\eta)=\frac{y-\eta}{|y-\eta|}$, we have
$$
     \int_{\partial\Omega}dz\int_{\mathbb{S}^2}nA_1\left(\int_0^{s(y,n)} e^{-A_2\hat{\xi}}\delta(z-y_0(y,n))d\hat{\xi}\right)dn
       =\int_{\rth}\frac{1}{|y-\eta|^2}\frac{y-\eta}{|y-\eta|} e^{-A_2|y-\eta|}A_1d\eta,$$since $\hat{\xi}=(y-\eta)\cdot n= |y-\eta|.$
    Then, going back to \eqref{eq.div.JR2}, we have
  $$ 0= div(\vec{R})+\int_{\rth}div\left(\frac{1}{|y-\eta|^2}\frac{y-\eta}{|y-\eta|} e^{-A_2|y-\eta|}\right)A_1d\eta.$$
  Note that 
  \begin{equation*}
      div\left(\frac{1}{|y-\eta|^2}\frac{y-\eta}{|y-\eta|} e^{-A_2|y-\eta|}\right)=div \left(\frac{e^{-A_2r}}{r^2}\hat{r}\right)=-A_2\frac{e^{-A_2r}}{r^2}+4\pi\delta(r).
  \end{equation*}  Therefore, we finally have
\begin{equation}\label{final eq nonexist}\frac{1}{4\pi}div\int_{\mathbb{S}^2}nf(n)e^{-A_2s(y,n)}dn=A_1A_2\int_{\rth}\frac{e^{-A_2|y-\eta|}}{4\pi|y-\eta|^2} d\eta- A_1=0.\end{equation}
 Note that $A_1>0$ and $A_2>0$ are constant and \eqref{final eq nonexist} must hold for any $y\in \Omega$ and $A_2(\tau)$ for $\tau \in I_L\subset\mathbb{R}$. In general, \eqref{final eq nonexist} does not hold for a general incoming boundary profile $f$. For instance, in a 1-dimensional model, we have $$div\int_{\mathbb{S}^2}nf(n)e^{-A_2s(y,n)}dn=(\text{constant independent of }y).$$  
 Therefore, we conclude that there is no stationary solution unless \eqref{final eq nonexist} holds. 
 \end{proof}
 
 \subsection{Smooth level curves $L$}
 In this section, we will check numerically if the assumption of Theorem \ref{thm.nonexist} that $L$ in \eqref{Ldef} defines a smooth curve holds (at least for some particular choices of kernels). We will assume the hard-sphere kernel assumption in the rest of the section.  In this case, we will assume for the sake of simplicity that 
 $$B_{non.el}(|v_3-v_4|,\omega \cdot (v_3-v_4))=2|v_3-v_4|,$$ such that 
 $W_+$ in \eqref{W+def}, \eqref{eq.P}, \eqref{eq.A}, and \eqref{eq.B} is given by
 $$W_+(v_3,v_4;v_1,v_2)=\sqrt{|v_3-v_4|^2+4\epsilon_0}.$$ Then the kernel does not depend on the angular variable $\omega$ and we have $\int_{\mathbb{S}^2} W_+d\omega = 4\pi W_+,$ and we have
 \begin{equation}\label{new.P}\mathcal{P}(T_k,T_l)= 4\pi \int_{\mathbb{R}^3}dv_3 \int_\rth dv_4  \sqrt{|v_3-v_4|^2+4\epsilon_0}\mathcal{Z}(v_3,0,T_k)\mathcal{Z}(v_4,0,T_l).\end{equation}
Also, by \eqref{Hdef}, \eqref{Sdef}, and \eqref{eq.B}, we have\begin{equation}
   \notag S(T_1,T_2)\eqdef \frac{\frac{4\pi H(T_1,T_2)(T_2-T_1)}{T_2-T_1H(T_1,T_2)}}{ \frac{4\sigma}{3}\frac{1}{T_1}\bigg[( 1-\frac{\mathcal{P}(T_1)}{\mathcal{P}(T_2,T_1)})\frac{e^{-\frac{2\epsilon_0}{T_1}}}{T_1}\mathcal{A}(T_1;\epsilon_0)+\frac{H(T_1,T_2)}{T_2}\mathcal{B}(T_1,T_2)\bigg]}.
 \end{equation}
 
\subsubsection{Level curves of $L$ via numerical simulations}\label{sec.numerics}
 In order to proceed to the numerical simulation in Section \ref{sec.numerics}, it is convenient to remove the removable singularity of $T_1-T_2=0$ in both numerator and denominator of $S(T_1,T_2)$ above. More precisely, we will consider the following representation
 \begin{equation}
   \notag S(T_1,T_2)\eqdef \frac{\frac{4\pi H(T_1,T_2)}{T_2-T_1H(T_1,T_2)}}{ \frac{4\sigma}{3}\frac{1}{T_1}\bigg[ \frac{\mathcal{P}(T_2,T_1)-\mathcal{P}(T_1)}{T_2-T_1}\frac{e^{-\frac{2\epsilon_0}{T_1}}}{T_1\mathcal{P}(T_2,T_1)}\mathcal{A}(T_1;\epsilon_0)+\frac{H(T_1,T_2)}{T_2}\frac{\mathcal{B}(T_1,T_2)}{T_2-T_1}\bigg]},
 \end{equation} and reduce the representations of $\frac{\mathcal{P}(T_2,T_1)-\mathcal{P}(T_1)}{T_2-T_1}$ and $\frac{\mathcal{B}(T_1,T_2)}{T_2-T_1}$. The reduction of the integral operators $\mathcal{P}$, $\mathcal{A}$, $\mathcal{B}$, $\frac{\mathcal{P}(T_2,T_1)-\mathcal{P}(T_1)}{T_2-T_1}$, and $\frac{\mathcal{B}(T_1,T_2)}{T_2-T_1}$ will be explicitly given in Appendix \ref{sec.appendix.reduction}. 

For the numerical simulations, we consider the domain $[10,12]\times [10,12]$ for $(T_1,T_2)$. Then Figure \ref{levelcurve2} below provides the level contour curves for $L(T_1,T_2)$ in the range $(T_1,T_2)\in [10,12]^2$ with $\epsilon_0=c_0=\sigma=1.$ In Figure \ref{levelcurve2}, we choose the values of $T_1$ and $T_2$ from 10 to 12 varying in steps of 0.1 and plot the level curves of $L$ using the values of  21 by 21 matrix. \begin{figure}[ht]
   
    \begin{subfigure}[b]{0.45\textwidth}
         \centering
         \includegraphics[width=\textwidth]{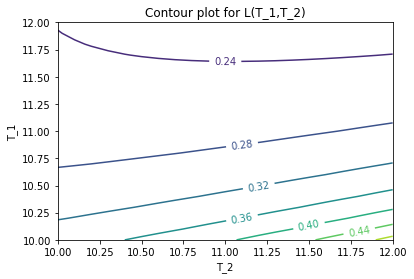}
         \caption{Contour}
     \end{subfigure}
 \hfill
     \begin{subfigure}[b]{0.45\textwidth}
         \centering
         \includegraphics[width=\textwidth]{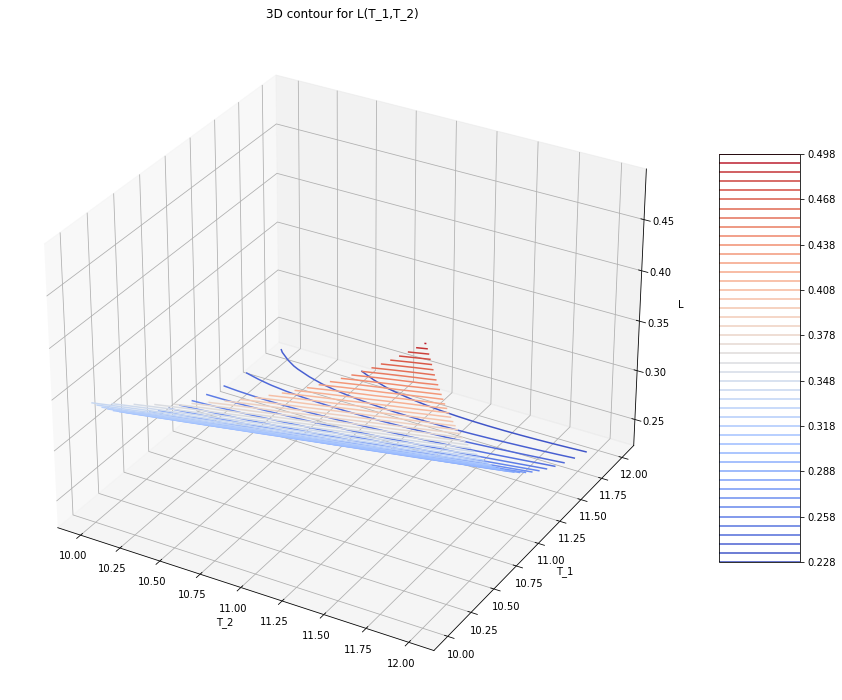}
         \caption{3D Contour}
     \end{subfigure}
    \caption{Contour level curves for $L(T_1,T_2)$ for $(T_1,T_2)\in [10,12]^2$}
    \label{levelcurve2}
\end{figure}
 Figure \ref{levelcurve2} shows that $L(T_1,T_2)$ defines a smooth curve in the plane $(T_1,T_2)\in [10,12]\times [10,12]$.
 
This completes the discussion of the non-existence of a stationary solution in the two-molecule system of Euler-radiation equations with two different temperatures. In the next section, we will introduce a system of three-level system in which we have non-LTE without assuming any artificial assumptions on the scale differences in the elastic collisions.

 \section{Non-LTE via a model for three-level molecules}\label{sec.nonLTE.3level}
 In this section, we will study the system of Euler-radiation equations for three-level molecules. The goal is to obtain situations in which the stationary solutions yield non-LTE with $u=0$. Previously, in Section \ref{sec.nonLTEnonexist}, we have obtained a non-LTE regime in which the elastic collisions between molecules $A$ and $\bar{A}$ take place much less often than the collisions between $A$ and $A$ or $\bar{A}$ and $\bar{A}.$ We will assume now that the rate of elastic collisions $A+\bar{A}\leftrightarrows A+\bar{A}$ is of the same order as that of the collisions $A+A\leftrightarrows A+A$  and $\bar{A}+\bar{A}\leftrightarrows \bar{A}+\bar{A}$. We have seen in Section \ref{sec.LTEexist} that in the case of a system with two levels $A$ and $\bar{A}$, the stationary solutions with $u=0$ yield LTE.  This implies that in stationary regime the system is necessarily in LTE. In order to obtain non-LTE steady states, we need a system with at least three levels.
 
 Therefore, in this section, we will consider a system with three states: the ground-state $A_1$, the first-level excited-state $A_2$, and the second-level excited-state $A_3,$ where the differences in the energy levels are given by $\epsilon_1$ and $\epsilon_2$.  In the rest of this section, we assume that the energy levels differ by the same amount $\epsilon_1=\epsilon_2=\epsilon$ such that the ground-state $A_1$ and the second-level excited-state $A_3$ have the energy difference of $2\epsilon.$
 The relevant difference is that in these limits we do not have in stationary configurations with zero macroscopic velocities that the transfer of molecules induced by the radiation are necessarily zero. Indeed, the energy equation gives just $Q^{(1)}\epsilon_1+Q^{(2)}\epsilon_2=0$, instead of $Q^{(1)}=-Q^{(2)}=0$. This allows in systems exchanging radiation with their surroundings, non-LTE stationary solutions (with $u=0$). 
Our assumptions on the elastic collisions imply that the temperatures of all the molecules are the same $T_1=T_2=T_3.$  We will assume that all the elastic collisions between molecules at the states $A_1$, $A_2,$ and $A_3$ take place at a comparable rate. 
 
 In this case, we consider the following interactions:
 \begin{enumerate}\item Elastic collisions between molecules: \begin{equation}\label{eq.elastic reactions.3p}
	A_i+A_j \rightleftarrows A_i+A_j,\ \text{ where }1\le i,j\le 3.
\end{equation}
These are collisions between molecules in which the total kinetic energy and the total momentum are conserved. 
\item Nonelastic collisions:
\begin{equation}\label{eq.nonelastic reactions.3p}
	\begin{split}
	    &A_1+A_1\rightleftarrows A_1+A_2,\\
	   &A_2+A_2\rightleftarrows A_2+A_3.
	\end{split}
\end{equation}These collisions are the collisions between two lower-state molecules in which one lower-state molecule is being excited. The reverse reaction can also take place.  Here we neglect the nonelastic collisions between $A_1$ and $A_3$. 
\item Collisions between a molecule and a photon:
\begin{equation}\label{eq.reactions2.3p}
	A_i+\phi\rightleftarrows A_{i+1},\text{ for }i=1,2.
\end{equation}Here a molecule in the lower-state absorbs a photon via this reaction and becomes a molecule in the upper excited state.  Since the energy differences between $A_1$ and $A_2$ and between $A_2$ and $A_3$ are the same, we expect that the same type of photon is being absorbed or is being emitted via the collision. 
The reverse reaction can also take place.
\end{enumerate}
 We could also include non-elastic collisions with the form,
$$A_{i+1}+A_i\rightleftarrows A_{i+1}+A_{i+1},\text{ for }i=1,2,$$
which, however, will be ignored in this section.

\subsection{Kinetic equations}
Denote $F^{(i)}$ as the distribution function of the molecule $A_i$ for $i=1,2,3$. Then as in Section \ref{sec.rescaled.kinetic} each distribution function $F^{(i)}$ in the three-level system satisfies the following kinetic equations 
\begin{equation}\label{eq.rescaled kinetic equations.3p}
    \begin{split}
      &  \frac{\partial}{\partial t}F^{(1)}+ v\cdot \nabla_y F^{(1)}=\frac{1}{\alpha}\left( \mathcal{K}^{(1,1)}_{el}[F^{(1)},F^{(1)}]+\mathcal{K}^{(1,2)}_{el}[F^{(1)},F^{(2)}]+\mathcal{K}^{(1,3)}_{el}[F^{(1)},F^{(3)}]\right)\\&\qquad\qquad+\sigma \left(\mathcal{K}^{(1,2)}_{non.el}[F^{(1)},F^{(2)}]\right)+\gamma_1 \int_{\mathbb{S}^2}(F^{(2)}(1+G)-F^{(1)}G)dn,\\
        &\frac{\partial}{\partial t}F^{(2)}+ v\cdot \nabla_y F^{(2)}=\frac{1}{\alpha}\left( \mathcal{K}^{(2,1)}_{el}[F^{(2)},F^{(1)}]+\mathcal{K}^{(2,2)}_{el}[F^{(2)},F^{(2)}]+\mathcal{K}^{(2,3)}_{el}[F^{(2)},F^{(3)}]\right)\\&\qquad\qquad+\sigma \left(\mathcal{K}^{(2,1)}_{non.el}[F^{(2)},F^{(1)}]+\mathcal{K}^{(2,3)}_{non.el}[F^{(2)},F^{(3)}]\right)\\&\qquad\qquad-\gamma_1 \int_{\mathbb{S}^2}(F^{(2)}(1+G)-F^{(1)}G)dn+\gamma_2 \int_{\mathbb{S}^2}(F^{(3)}(1+G)-F^{(2)}G)dn,\\
        &\frac{\partial}{\partial t}F^{(3)}+ v\cdot \nabla_y F^{(3)}=\frac{1}{\alpha}\left( \mathcal{K}^{(3,1)}_{el}[F^{(3)},F^{(1)}]+\mathcal{K}^{(3,2)}_{el}[F^{(3)},F^{(2)}]+\mathcal{K}^{(3,3)}_{el}[F^{(3)},F^{(3)}]\right)\\&\qquad\qquad+\sigma \left(\mathcal{K}^{(3,2)}_{non.el}[F^{(3)},F^{(2)}]\right)-\gamma_2 \int_{\mathbb{S}^2}(F^{(3)}(1+G)-F^{(2)}G)dn,\\
        &n\cdot \nabla_y G=\gamma_1\epsilon \int_{\mathbb{R}^3}(F^{(2)}(1+G)-F^{(1)}G)dv+\gamma_2\epsilon \int_{\mathbb{R}^3}(F^{(3)}(1+G)-F^{(2)}G)dv,
    \end{split}
\end{equation}
where $F=\begin{pmatrix}F^{(1)}\\F^{(2)}\\F^{(3)} \end{pmatrix}$ and we rescale the variables and have $\gamma_1+\gamma_2=1$ and $\sigma=\eta/\alpha$. Here $\mathcal{K}^{(i,j)}_{el}$ is defined as in \eqref{eq.kel} for $i,j=1,2,3$. Analogously to \eqref{eq.knonel1}-\eqref{eq.knonel2}, the non-elastic operators $\mathcal{K}^{(i,j)}_{non.el}$ in the three-level system are defined as follows:
\begin{multline*}
    \mathcal{K}^{(1,2)}_{non.el}[F,F](\bar{v})\eqdef 2\int_\rth d\bar{v}_2 \int_\stw d\omega \frac{\sqrt{|\bar{v}-\bar{v}_2|^2-4\epsilon_0}}{2|\bar{v}-\bar{v}_2|} B_{non.el}(|\bar{v}-\bar{v}_2|,\omega\cdot (\bar{v}-\bar{v}_2)) (\bar{F}_3^{(2)}\bar{F}_4^{(1)}-\bar{F}^{(1)}(\bar{v}) \bar{F}_2^{(1)})\\
    +\int_\rth d\bar{v}_3 \int_\stw d\omega \frac{\sqrt{|\bar{v}_3-\bar{v}|^2+4\epsilon_0}}{2|\bar{v}_3-\bar{v}|} B_{non.el}(|\bar{v}_3-\bar{v}|,\omega\cdot (\bar{v}_3-\bar{v}))  (\bar{F}^{(1)}_1\bar{F}^{(1)}_2-\bar{F}^{(2)}_3\bar{F}^{(1)}(\bar{v})),
\end{multline*}

\begin{multline*}
    \mathcal{K}^{(2,1)}_{non.el}[F,F](\bar{v})\eqdef \int_\rth d\bar{v}_4 \int_\stw d\omega \frac{\sqrt{|\bar{v}-\bar{v}_4|^2+4\epsilon_0}}{2|\bar{v}-\bar{v}_4|} B_{non.el}(|\bar{v}-\bar{v}_4|,\omega\cdot (\bar{v}-\bar{v}_4))   (\bar{F}^{(1)}_1\bar{F}^{(1)}_2-\bar{F}^{(2)}(\bar{v})\bar{F}^{(1)}_4),
\end{multline*}

\begin{multline*}
    \mathcal{K}^{(2,3)}_{non.el}[F,F](\bar{v})\eqdef 2\int_\rth d\bar{v}_2 \int_\stw d\omega \frac{\sqrt{|\bar{v}-\bar{v}_2|^2-4\epsilon_0}}{2|\bar{v}-\bar{v}_2|} B_{non.el}(|\bar{v}-\bar{v}_2|,\omega\cdot (\bar{v}-\bar{v}_2)) (\bar{F}_3^{(3)}\bar{F}_4^{(2)}-\bar{F}^{(2)}(\bar{v}) \bar{F}_2^{(2)})\\
    +\int_\rth d\bar{v}_3 \int_\stw d\omega \frac{\sqrt{|\bar{v}_3-\bar{v}|^2+4\epsilon_0}}{2|\bar{v}_3-\bar{v}|} B_{non.el}(|\bar{v}_3-\bar{v}|,\omega\cdot (\bar{v}_3-\bar{v}))  (\bar{F}^{(2)}_1\bar{F}^{(2)}_2-\bar{F}^{(3)}_3\bar{F}^{(2)}(\bar{v})),
\end{multline*}
\begin{multline*}
    \mathcal{K}^{(3,2)}_{non.el}[F,F](\bar{v})\eqdef \int_\rth d\bar{v}_4 \int_\stw d\omega \frac{\sqrt{|\bar{v}-\bar{v}_4|^2+4\epsilon_0}}{2|\bar{v}-\bar{v}_4|} B_{non.el}(|\bar{v}-\bar{v}_4|,\omega\cdot (\bar{v}-\bar{v}_4))   (\bar{F}^{(2)}_1\bar{F}^{(2)}_2-\bar{F}^{(3)}(\bar{v})\bar{F}^{(2)}_4).
\end{multline*}
In principle, the collision kernels $B_{non.el}$ could be different for each type of collision, but we will take them equal for the sake of simplicity.

As \eqref{nonLTE equilibrium}, the local Maxwellian equilibria $M^{(i)}$ for the molecules $A_i$ for $i=1,2,3$ are defined as
\begin{equation}
    \label{Maxwellian.3p}
   M^{(i)}= M^{(i)}(t,x,v;T,u)\eqdef \frac{c_0\rho_i}{T^{\frac{3}{2}}}\exp\left(-\frac{|v-u|^2}{T}\right),\text{ for }i=1,2,3,
\end{equation}
where the molecule density $\rho_i$ is defined as 
$$\rho_i(t,x)\eqdef \int_{\mathbb{R}^3} F^{(i)}(t,x,v)dv.
$$ 
 \subsection{Hydrodynamic equations via conservation laws}Then as in Section \ref{sec.chapman.nonlte} we use the same Chapman-Enskog expansion \eqref{eq.chapman.euler.nonlte} in \eqref{eq.rescaled kinetic equations.3p} and obtain the leading order equations in $\alpha.$ Then we take the inner product \eqref{new L2 inner product} on the leading order equations in $\alpha$ against 1, and also take the inner product \eqref{eq.L2 inner product} on the leading order equation against  $\begin{pmatrix}v-u\\v-u\\v-u\end{pmatrix}$, and $\frac{4}{3}\begin{pmatrix}\frac{|v-u |^2}{2}\\\frac{|v-u |^2}{2}+\epsilon\\\frac{|v-u |^2}{2}+2\epsilon\end{pmatrix}$. Then finally we follow the same principles for the reduction via the conservation laws as in the beginning of Section \ref{sec.representations} and obtain the following set of hydrodynamic equations analogous to \eqref{Euler.nonLTE}:
 \begin{equation}\label{eq.nonlte.3p1}
     \partial_t \rho_1 +\nabla_y \cdot (\rho_1u)=\sigma H_{1,2}(\rho_1,\rho_2,T)+Q_{1,2}(\rho_1,\rho_2,T;G),
 \end{equation}
  \begin{equation}
     \partial_t \rho_2 +\nabla_y \cdot (\rho_2u)=\sigma H_{2,1}(\rho_2,\rho_1,T)+\sigma H_{2,3}(\rho_2,\rho_3,T)+Q_{2,1}(\rho_2,\rho_1,T;G)+Q_{2,3}(\rho_2,\rho_3,T;G),
 \end{equation}
  \begin{equation}
     \partial_t \rho_3 +\nabla_y \cdot (\rho_3u)=\sigma H_{3,2}(\rho_3,\rho_2,T)+Q_{3,2}(\rho_3,\rho_2,T;G),
 \end{equation}
   \begin{equation}
     \partial_t ((\rho_1+\rho_2+\rho_3)u) +\nabla_y\cdot ((\rho_1+\rho_2+\rho_3)(u\otimes u))+\nabla_y (S^{(1)}+S^{(2)}+S^{(3)})=0,
 \end{equation}
 \begin{multline}\label{eq.nonlte.3p5}
     \partial_t \left(\left(\rho_1+\rho_2+\rho_3\right)T+\frac{4}{3}\epsilon \rho_2+\frac{8}{3}\epsilon \rho_3\right) +\nabla_y\cdot \left(\left(\left(\rho_1+\rho_2+\rho_3\right)T+\frac{4}{3}\epsilon \rho_2+\frac{8}{3}\epsilon \rho_3\right)u\right)\\=\frac{4}{3}\epsilon( Q_{2,1}(\rho_2,\rho_1,T;G)+ Q_{3,2}(\rho_3,\rho_2,T;G)),
 \end{multline}
 where the operators are defined as
 \begin{equation*}
    H_{i,j}(\rho_i,\rho_j,T)\eqdef \int_{\mathbb{R}^3}\mathcal{K}^{(i,j)}_{non.el}[M,M]dv,
 \end{equation*}
 \begin{equation*}
 Q_{1,2}(\rho_1,\rho_2,T;G)=-Q_{2,1}(\rho_2,\rho_1,T;G)\eqdef \gamma_1 \int_{\mathbb{S}^2}(\rho_2(1+G)-\rho_1 G)dn,
 \end{equation*}
  \begin{equation*}
 Q_{2,3}(\rho_2,\rho_3,T;G)=-Q_{3,2}(\rho_3,\rho_2,T;G)\eqdef \gamma_2 \int_{\mathbb{S}^2}(\rho_3(1+G)-\rho_2 G)dn,
 \end{equation*}
 \begin{equation}\notag
    S^{(j)}\eqdef \int (v-u )\otimes (v-u )M^{(j)} dv=\frac{1}{2}\rho_j T,
\end{equation}
 where $M=\begin{pmatrix}M^{(1)}\\M^{(2)}\\M^{(3)} \end{pmatrix}$. By the same argument as in \eqref{eq.H.reduction.initial} and \eqref{eq.H.reduction.final}, we have
 \begin{equation}\label{H12.eq}
     H_{2,1}=-H_{1,2}=(\rho_1^2 e^{-\frac{2\epsilon}{T}}
    -\rho_1\rho_2)\mathcal{P}_{2,1}(T),
 \end{equation}
 and
 
 \begin{equation}
     H_{3,2}=-H_{2,3}=(\rho_2^2 e^{-\frac{2\epsilon}{T}}
    -\rho_2\rho_3)\mathcal{P}_{3,2}(T),
 \end{equation} where $\mathcal{P}_{i+1,i}$ for $i=1,2$ is defined as 
\begin{equation}\label{eq.P.3p}\mathcal{P}_{i+1,i}(T;u)\eqdef \int_{\mathbb{R}^3}dv \int_\rth dv_4 \int_\stw d\omega\ W^{(i+1,i)}_+ (v,v_4;v_1,v_2)\mathcal{Z}(v,u,T)\mathcal{Z}(v_4,u,T),\end{equation} with   $$\mathcal{Z}(v,u,T)\eqdef\frac{c_0}{T^{\frac{3}{2}}}\exp\left(-\frac{|v-u|^2}{T}\right),$$ 
 where $W^{(i+1,i)}_+$ for $i=1,2$ is the collision cross-section for the nonelastic operator $\mathcal{K}^{(i+1,i)}_{non.el}$ which satisfies
 $$W^{(i+1,i)}_+(v_3,v_4;v_1,v_2)\eqdef W_+(v_3,v_4;v_1,v_2)\text{ for }i=1,2.$$ Here we simplified the notation and used $\mathcal{P}_{i,j}(T)\eqdef \mathcal{P}_{i,j}(T;0)$.  Similarly, we define $\mathcal{P}_{i,i+1}$ for $i=1,2$ such that it satisfies
 $$H_{i,i+1}=(\rho_i\rho_{i+1}-\rho_i^2e^{-\frac{2\epsilon}{T}})\mathcal{P}_{i,i+1}(T)\text{ for }i=1,2.$$ 
 
 \subsection{Stationary equations with $u=0$}
 In this subsection and the forthcoming subsection, we examine the existence of the steady states to the hydrodynamic equations \eqref{eq.nonlte.3p1}-\eqref{eq.nonlte.3p5} with $u=0$ for a linearized version of the problem. They satisfy the following system of equations:
 \begin{equation}\notag\begin{split}
     &  \sigma H_{1,2}(\rho_1,\rho_2,T)+Q_{1,2}(\rho_1,\rho_2,T;G)=0,\\
     &  \sigma( H_{2,1}(\rho_2,\rho_1,T)+ H_{2,3}(\rho_2,\rho_3,T))+Q_{2,1}(\rho_2,\rho_1,T;G)+Q_{2,3}(\rho_2,\rho_3,T;G)=0,\\
     &  \sigma H_{3,2}(\rho_3,\rho_2,T)+Q_{3,2}(\rho_3,\rho_2,T;G)=0,\\
     &  \nabla_y (S^{(1)}+S^{(2)}+S^{(3)})=0,\\
     &\frac{4}{3}\epsilon( Q_{2,1}(\rho_2,\rho_1,T;G)+ Q_{3,2}(\rho_3,\rho_2,T;G))=0.
 \end{split}
 \end{equation}
 Using $Q_{2,1}=-Q_{1,2}$ and $Q_{3,2}=-Q_{2,3}$, we obtain the following set of equations for the steady states 
 \begin{equation}\label{final eqs. 3p}\begin{split}
     &  \sigma H_{1,2}+Q_{1,2}=0,\\
     &  \sigma H_{2,3}+Q_{2,3}=0,\\
     & \frac{1}{2}(\rho_1+\rho_2+\rho_3)T = \text{(constant)},\\
     &Q_{1,2}+Q_{2,3}=0.\end{split}
 \end{equation}
 For the radiation intensity $G$, we recall \eqref{eq.rescaled kinetic equations.3p} and have
 \begin{equation}\label{rad.3p}
        n\cdot \nabla_y G=\gamma_1\epsilon (\rho_2(1+G)-\rho_1G)+\gamma_2\epsilon (\rho_3(1+G)-\rho_2G).
 \end{equation}Finally, the set of equations above is coupled with the total mass conservation that
 \begin{equation}\label{mass.3p}
     \int_{\Omega_y} (\rho_1+\rho_2+\rho_3)dy = m_0,
 \end{equation}where $\Omega_y$ is the rescaled domain of $\Omega$ in the variable $y$.
 By \eqref{final eqs. 3p}$_1$, \eqref{final eqs. 3p}$_2$, and \eqref{final eqs. 3p}$_4$, we have 
 $H_{2,1}+H_{3,2}=0.$ 
 In the simplest situation where we assume $\mathcal{P}_{2,1}=\mathcal{P}_{3,2}$ in \eqref{eq.P.3p}, we have
 $$\rho_1^2e^{-\frac{2\epsilon}{T}}-\rho_1\rho_2+\rho_2^2e^{-\frac{2\epsilon}{T}}-\rho_2\rho_3=0.$$ Note that this equation does not necessarily yield the LTE situation where the density distributions are related by the Boltzmann ratio as $\rho_3=\rho_2e^{-\frac{2\epsilon}{T}}$ and $\rho_2=\rho_1e^{-\frac{2\epsilon}{T}}$. This contrasts with the situations for the two-level system in Section \ref{sec.nonLTEexist} where the stationary system with $u=0$ yields $H^{(1)}=H^{(2)}=0$, which yields back the LTE situation $\rho_2=\rho_1^{-\frac{2\epsilon_0}{T}}$. 
 \subsection{Linearized problem for the stationary equations}In general, it is difficult to study the full solutions to the stationary equations \eqref{final eqs. 3p}-\eqref{mass.3p}. In this subsection, we study the linearized problem of the three-level stationary system  \eqref{final eqs. 3p} -\eqref{mass.3p}. The idea is to take radiation close to equilibrium with small perturbation parameter. It will be then possible to use the methods used in Section \ref{sec.LTEexist} to obtain the temperature for each given incoming radiation flux.   We study under the scaling where $\frac{\epsilon}{T}$ is of order one and the radiation is close to the equilibrium of the pseudo-Planck distribution \eqref{pseudo planck} as in Section \ref{sec.linearized}. 
 
 By following similar arguments of Section \ref{sec.constant solutions}, we can assume that under some proper boundary conditions as in Proposition \ref{prop.constant.sol} the system \eqref{final eqs. 3p} -\eqref{mass.3p} admits a constant stationary solution.  We have a stationary solution with constant temperature and gas density given by
 \begin{equation}\label{constant.stationary.3p}(\rho_1,\rho_2,\rho_3,u,T,G)=(\rho_0, \rho_0e^{-\frac{2\epsilon}{T_0}},\rho_0e^{-\frac{4\epsilon}{T_0}},0,T_0,G_p),\end{equation} where $\rho_0$ and $T_0$ are positive constants and $G_p$ is the pseudo-Planck equilibrium defined as 
 $$G_p\eqdef \frac{e^{-\frac{2\epsilon}{T_0}}}{1-e^{-\frac{2\epsilon}{T_0}}}.$$ 
 We will consider the perturbations nearby the constant stationary solution \eqref{constant.stationary.3p}. First of all we write for $j=1,2,3,$
 $$\rho_j=\rho_0e^{-(j-1)\frac{2\epsilon}{T}}(1+\sigma_j),\ T=T_0(1+\xi),\ G=G_p(1+h),$$ for some small perturbations $|\sigma_j|\ll \rho_0$, $|\xi|\ll T_0$, and $|h|\ll 1,$ with the scaling $\frac{2\epsilon}{T_0}\simeq 1.$
  Then we now linearize the system of equations \eqref{final eqs. 3p} -\eqref{mass.3p}. Then we obtain
  \begin{multline*}
      H_{2,3}=-H_{3,2}=\mathcal{P}_{2,3} (T) (\rho_2\rho_3-\rho_2^2e^{-\frac{2\epsilon}{T}})\\\simeq \mathcal{P}_{2,3} (T_0)\rho_0^2 (e^{-\frac{6\epsilon}{T_0}}(1+\sigma_2)(1+\sigma_3)-e^{-\frac{4\epsilon}{T_0}}e^{-\frac{2\epsilon}{T_0}(1-\xi)}(1+\sigma_2)^2)\\
       \simeq \mathcal{P}_{2,3} (T_0)\rho_0^2e^{-\frac{6\epsilon}{T_0}} \left(\sigma_3-\sigma_2-\frac{2\epsilon}{T_0}\xi\right).
  \end{multline*}
  Similarly, we use \eqref{H12.eq} and have
$$
      H_{1,2}=-H_{2,1}
       \simeq \mathcal{P}_{1,2} (T_0)\rho_0^2e^{-\frac{2\epsilon}{T_0}} \left(\sigma_2-\sigma_1-\frac{2\epsilon}{T_0}\xi\right).
$$
 In addition, we have
 \begin{multline*}
     Q_{1,2}=-Q_{2,1}=\gamma_1 \int_{\mathbb{S}^2}(\rho_2(1+G)-\rho_1G)dn\\
     \simeq \gamma_1\rho_0 \int_{\mathbb{S}^2}(e^{-\frac{2\epsilon}{T_0}}(1+\sigma_2)(1+G_p+G_ph)-(1+\sigma_1)(G_p+G_ph))dn\\
     \simeq \gamma_1\rho_0 \left(4\pi G_p\left( \sigma_2-
     \sigma_1\right)-e^{-\frac{2\epsilon}{T_0}}\int_{\mathbb{S}^2}hdn\right),
 \end{multline*} since $e^{-\frac{2\epsilon}{T_0}}(1+G_p)=G_p.$
 Similarly, we also deduce that
$$
    Q_{2,3}=-Q_{3,2}
     \simeq \gamma_2\rho_0 e^{-\frac{2\epsilon}{T_0}}\left(4\pi G_p\left( \sigma_3-
     \sigma_2\right)-e^{-\frac{2\epsilon}{T_0}}\int_{\mathbb{S}^2}hdn\right).
$$
 These operators are the energy fluxes due to the radiation.
 From the radiation equation \eqref{rad.3p},
 we also have$$
      n\cdot \nabla_y (G_ph)=\gamma_1\epsilon \rho_0G_p (\sigma_2-\sigma_1-h(1-e^{-\frac{2\epsilon}{T_0}}))+\gamma_2\epsilon \rho_0 G_p e^{-\frac{2\epsilon}{T_0}}(\sigma_3-\sigma_2-h(1-e^{-\frac{2\epsilon}{T_0}})).$$
 Therefore, we obtain
  \begin{equation}\label{eq.linearized.radiation}
      n\cdot \nabla_y h=\gamma_1\epsilon \rho_0 (\sigma_2-\sigma_1-h(1-e^{-\frac{2\epsilon}{T_0}}))+\gamma_2\epsilon \rho_0  e^{-\frac{2\epsilon}{T_0}}(\sigma_3-\sigma_2-h(1-e^{-\frac{2\epsilon}{T_0}})).
 \end{equation}
 From the pressure equation \eqref{final eqs. 3p}$_3$, we deduce that
 \begin{equation}\label{eq.20}(1+e^{-\frac{2\epsilon}{T_0}}+e^{-\frac{4\epsilon}{T_0}})\xi + (\sigma_1+e^{-\frac{2\epsilon}{T_0}}\sigma_2+e^{-\frac{4\epsilon}{T_0}}\sigma_3)=C_0,\end{equation} for some constant $C_0>0.$ Then together with \eqref{mass.3p}, we further have
 \begin{equation}
     \label{mass.3p.final}
    (1+e^{-\frac{2\epsilon}{T_0}}+e^{-\frac{4\epsilon}{T_0}})\int_{\Omega_y}\xi(y) \ dy =C_0|\Omega_y|-\left(m_0-\rho_0|\Omega_y|(1+e^{-\frac{2\epsilon}{T_0}}+e^{-\frac{4\epsilon}{T_0}})\right),
 \end{equation}where $C_0$ and $m_0$ are given. 
 The constant $C_0$ depends on the total amount of gas in the container. The pressure of the gas is affected by the value of $C_0.$
 
 Then the solvability of the system is as follows. Firstly, we can represent $h$ linearly in terms of $\sigma_1,\sigma_2,\sigma_3$ by solving the radiation equation \eqref{eq.linearized.radiation} as in Section \ref{sec.linearized existence}.  We then have from $Q_{1,2}+Q_{2,3}=0$ that
 \begin{equation}\label{eq.1}
      4\pi \gamma_1 G_p\left( \sigma_2-
     \sigma_1\right)+ e^{-\frac{2\epsilon}{T_0}}4\pi\gamma_2 G_p\left( \sigma_3-
     \sigma_2\right)=e^{-\frac{2\epsilon}{T_0}}\left(\gamma_1+\gamma_2e^{-\frac{2\epsilon}{T_0}}\right)\int_{\mathbb{S}^2}hdn,
 \end{equation} where $\int_{\mathbb{S}^2}hdn$ is further represented linearly in terms of $\sigma_1,\sigma_2,\sigma_3$ via the solution $h$ of the radiation equation \eqref{eq.linearized.radiation}.  Also, from \eqref{eq.20}, we obtain
 \begin{equation}\label{eq.2} \sigma_1+e^{-\frac{2\epsilon}{T_0}}\sigma_2+e^{-\frac{4\epsilon}{T_0}}\sigma_3=C_0-(1+e^{-\frac{2\epsilon}{T_0}}+e^{-\frac{4\epsilon}{T_0}})\xi(y) ,\end{equation} where $C_0$ is the constant that depends on $\int_{\Omega_y}\xi dy$ and the given constants $m_0$, $\rho_0$, $T_0$, and $\Omega_y$ as in \eqref{mass.3p.final}. 
 Finally, from $H_{1,2}+H_{2,3}=0,$ we have
 \begin{equation}
     \label{eq.3}
     \mathcal{P}_{1,2}(T_0)(\sigma_2-\sigma_1)+\mathcal{P}_{2,3}(T_0)e^{-\frac{4\epsilon}{T_0}}(\sigma_3-\sigma_2)=\frac{2\epsilon}{T_0}\left(\mathcal{P}_{1,2}+\mathcal{P}_{2,3}(T_0)e^{-\frac{4\epsilon}{T_0}}\right)\xi(y).
 \end{equation} We then deduce a linear system of equations \eqref{eq.1}-\eqref{eq.3} for $\sigma_1,\sigma_2,\sigma_3$ and generically they are independent. We then obtain a solution $\sigma_1,\sigma_2,\sigma_3$ and for generic choices of $\gamma_1,$ $\gamma_2,$ $\mathcal{P}_{1,2}$ and $\mathcal{P}_{2,3}$ we have 
 $\sigma_1\neq\sigma_2\neq \sigma_3.$ Thus, this yields a general non-LTE stationary solution to the linearized problem.

This completes the study of the Euler-like system coupled with radiation yielding non-LTE steady states in a gas with three-level molecules.

\section*{Acknowledgement} The authors gratefully acknowledge the support of the grant CRC
1060 ``The Mathematics of Emergent Effects" of the University of Bonn funded
through the Deutsche Forschungsgemeinschaft (DFG, German Research Foundation). J. W. Jang is supported by Basic Science Research Institute Fund of Korea, whose NRF grant number is 2021R1A6A1A10042944. Juan J. L. Vel\'azquez is also funded by DFG under Germany's Excellence Strategy-EXC-2047/1-390685813.

\appendix

\section{Derivation of the nonelastic collision operators via weak formulation}
\label{sec.weakfor derivation}
In this section, we will introduce how to derive the explicit forms of the nonelastic Boltzmann operator of \eqref{eq.knonel1}-\eqref{eq.knonel12} for the reaction \eqref{eq.nonelastic reactions}.  
  To this end, we will first write the kinetic system of radiation from Section \ref{sec.kinetic system} in the weak form as follows.  

\subsection{Weak formulation}
For the weak formulation of the system, we also introduce a relevant physical boundary condition for $x\in\partial\Omega$. We assume that we consider the molecules in a bounded domain $\Omega\subset \mathbb{R}^3$ where we impose the specular boundary condition for the gas molecules at the boundary $\partial \Omega$. Then we introduce the weak formulation of the system of the gas molecules.

\begin{definition}We say that a vector of non-negative Radon measures  $(F,Q)=(\Fon,\Ftw,Q)^\top $ is a weak solution to the kinetic system for radiation if for any $\varphi^{(i)}\in C^\infty_c( \rth)$ for $i=1,2,3$, it satisfies  
\begin{multline}\label{weak.formulation}
    \partial_t \int_{\Omega}dx\bigg(\int_\rth dv\ \left( \Fon\varphi^{(1)}(v)+\Ftw\varphi^{(2)}(v)\bigg)+\int_{\mathbb{S}^2}dn\ Q\varphi^{(3)}(n)\right) \\
    =\frac{1}{2}\sum_{i=1}^2\sum_{j=1}^2\int_{\Omega}dx\int_\rth dv_1\int_\rth dv_2 \int_\stw d\omega \ W_{el}^{(i,j)}F_1^{(i)}F_2^{(j)}\\\times \left(\varphi^{(i)}(v_3)+\varphi^{(i)}(v_4)-\varphi^{(i)}(v_1)-\varphi^{(i)}(v_2)\right) \\
    +\int_{\Omega}dx\iiiint_{\mathbb{R}^{12}} d\bar{v}_1d\bar{v}_2d\bar{v}_3d\bar{v}_4\ \delta(\bar{v}_1+\bar{v}_2-\bar{v}_3-\bar{v}_4)\delta(|\bar{v}_1|^2+|\bar{v}_2|^2-|\bar{v}_3|^2-|\bar{v}_4|^2-2\epsilon_0)\\\times W_{non.el}(\bar{v}_1,\bar{v}_2;\bar{v}_3,\bar{v}_4)\bigg[\bar{F}^{(1)}_1\bar{F}^{(1)}_2(\varphi^{(1)}(\bar{v}_4)+\varphi^{(2)}(\bar{v}_3)-\varphi^{(1)}(\bar{v}_1)-\varphi^{(1)}(\bar{v}_2))\\
    +\bar{F}^{(2)}_3\bar{F}^{(1)}_4(\varphi^{(1)}(\bar{v}_1)+\varphi^{(1)}(\bar{v}_2)-\varphi^{(1)}(\bar{v}_4)-\varphi^{(2)}(\bar{v}_3))\bigg]\\
    +\int_{\Omega}dx\int_\rth dv\int_{\mathbb{S}^2} dn\ h_{rad}[\Fon ,\Ftw ,Q](\varphi^{(1)}(v)-\varphi^{(2)}(v)+\varphi^{(3)}(n)),
\end{multline}
where for both $j=1,2$, $\varphi^{(j)}(t,x,v)=\varphi^{(j)}(t,x,v-2(v\cdot n_x)n_x)$ with the outward normal vector $n_x$ at the boundary point $x\in \partial\Omega$ and
$$ h_{rad}[\Fon ,\Ftw ,Q]=\frac{B_{12}}{4\pi} \left[\frac{2h\nu_0^3}{c^2}\Ftw (v)\left(1+\frac{c^3}{2\nu_0^2}Q\right)-\Fon (v) c\epsilon_0 Q \right].$$
\end{definition}

As a consequence, we obtain the conservation of the number of gas molecules that 
$$
\frac{d}{dt}\bigg(	\int_\Omega\int_\rth \Fon dvdx+ \int_\Omega\int_\rth \Ftw dvdx\bigg)=0,
$$ with $\varphi^{(1)}=\varphi^{(2)}=1$ and $\varphi^{(3)}=0$.
In addition, we obtain the conservation of total energy in a closed system
 $$
\frac{d}{dt}\bigg(	\int_\Omega\bigg(\int_\rth \left(\frac{|v|^2}{2}\Fon + \left(\frac{|v|^2}{2}+\epsilon_0\right)\Ftw\right) dv + \epsilon_0\int_{\mathbb{S}^2}Qdn \bigg)dx\bigg)=0,
$$with $\varphi^{(1)}=\frac{|v|^2}{2}$, $\varphi^{(2)}=\frac{|v|^2}{2}+\epsilon_0$, and $\varphi^{(3)}=\epsilon_0.$   The examples of the boundary-value conditions that yield the energy conservation include the case of specularly-reflected molecules and radiation at the boundary in a bounded domain or in a torus.

\subsection{Derivation of the nonelastic operators}
Now we first let 
$$\bar{v}_3=\frac{\bar{v}_1+\bar{v}_2}{2}+k\omega,\ \bar{v}_4=\frac{\bar{v}_1+\bar{v}_2}{2}-k\omega,$$ for some $k=k(\bar{v}_1,\bar{v}_2,\omega)\ge 0$. 
Now, we plug this into the nonelastic terms in the weak formulation \eqref{weak.formulation}. Then we observe that 
\begin{multline*}
   I_{non.el}\eqdef \iiiint_{\mathbb{R}^{12}} d\bar{v}_1d\bar{v}_2d\bar{v}_3d\bar{v}_4\ \delta(\bar{v}_1+\bar{v}_2-\bar{v}_3-\bar{v}_4)\delta(|\bar{v}_1|^2+|\bar{v}_2|^2-|\bar{v}_3|^2-|\bar{v}_4|^2-2\epsilon_0)\\\times W_{non.el}(\bar{v}_1,\bar{v}_2;\bar{v}_3,\bar{v}_4)\bigg[\bar{F}^{(1)}_1\bar{F}^{(1)}_2(\varphi^{(1)}(\bar{v}_4)+\varphi^{(2)}(\bar{v}_3)-\varphi^{(1)}(\bar{v}_1)-\varphi^{(1)}(\bar{v}_2))\\
    +\bar{F}^{(2)}_3\bar{F}^{(1)}_4(\varphi^{(1)}(\bar{v}_1)+\varphi^{(1)}(\bar{v}_2)-\varphi^{(1)}(\bar{v}_4)-\varphi^{(2)}(\bar{v}_3))\bigg]\\
     =\iint_{\mathbb{R}^{6}} d\bar{v}_1d\bar{v}_2 \int_0^\infty dk \ k^2 \int_{\mathbb{S}^2}d\omega \  \delta\bigg(|\bar{v}_1|^2+|\bar{v}_2|^2-\bigg|\frac{\bar{v}_1+\bar{v}_2}{2}+k\omega\bigg|^2-\bigg|\frac{\bar{v}_1+\bar{v}_2}{2}-k\omega\bigg|^2-2\epsilon_0\bigg)\\\times W_{non.el}\bigg(\bar{v}_1,\bar{v}_2;\frac{\bar{v}_1+\bar{v}_2}{2}+k\omega,\frac{\bar{v}_1+\bar{v}_2}{2}-k\omega\bigg)\\\times \bigg[\bar{F}^{(1)}_1\bar{F}^{(1)}_2\bigg(\varphi^{(1)}\bigg(\frac{\bar{v}_1+\bar{v}_2}{2}-k\omega\bigg)+\varphi^{(2)}\bigg(\frac{\bar{v}_1+\bar{v}_2}{2}+k\omega\bigg)-\varphi^{(1)}(\bar{v}_1)-\varphi^{(1)}(\bar{v}_2)\bigg)\\
    +\bar{F}^{(2)}\bigg(\frac{\bar{v}_1+\bar{v}_2}{2}+k\omega\bigg)\bar{F}^{(1)}\bigg(\frac{\bar{v}_1+\bar{v}_2}{2}-k\omega\bigg)\\\times \bigg(\varphi^{(1)}(\bar{v}_1)+\varphi^{(1)}(\bar{v}_2)-\varphi^{(1)}\bigg(\frac{\bar{v}_1+\bar{v}_2}{2}-k\omega\bigg)-\varphi^{(2)}\bigg(\frac{\bar{v}_1+\bar{v}_2}{2}+k\omega\bigg)\bigg)\bigg].
\end{multline*}
Then note that we also have
\begin{multline*}
    \delta\bigg(|\bar{v}_1|^2+|\bar{v}_2|^2-\bigg|\frac{\bar{v}_1+\bar{v}_2}{2}+k\omega\bigg|^2-\bigg|\frac{\bar{v}_1+\bar{v}_2}{2}-k\omega\bigg|^2-2\epsilon_0\bigg)\\
    =\delta\bigg(2\left(k-\sqrt{\frac{|\bar{v}_1-\bar{v}_2|^2}{4}-\epsilon_0}\right)\left(k+\sqrt{\frac{|\bar{v}_1-\bar{v}_2|^2}{4}-\epsilon_0}\right)\bigg).
\end{multline*}
By evaluating this delta function, we obtain that 
\begin{multline*}
    I_{non.el} 
    =\iint_{\mathbb{R}^{6}} d\bar{v}_1d\bar{v}_2 \frac{1}{4}\sqrt{\frac{|\bar{v}_1-\bar{v}_2|^2}{4}-\epsilon_0} \int_{\mathbb{S}^2}d\omega \   W_{non.el}\bigg(\bar{v}_1,\bar{v}_2;\frac{\bar{v}_1+\bar{v}_2}{2}+k\omega,\frac{\bar{v}_1+\bar{v}_2}{2}-k\omega\bigg)\\\times \bigg[
    (\varphi^{(1)}(\bar{v}_1)+\varphi^{(1)}(\bar{v}_2))(\bar{F}^{(2)}_3\bar{F}^{(1)}_4-\bar{F}^{(1)}_1\bar{F}^{(1)}_2)
    +\varphi^{(1)}\bigg(\frac{\bar{v}_1+\bar{v}_2}{2}-k\omega\bigg)(\bar{F}^{(1)}_1\bar{F}^{(1)}_2-\bar{F}^{(2)}_3\bar{F}^{(1)}_4)\\
    +\varphi^{(2)}\bigg(\frac{\bar{v}_1+\bar{v}_2}{2}+k\omega\bigg)(\bar{F}^{(1)}_1\bar{F}^{(1)}_2-\bar{F}^{(2)}_3\bar{F}^{(1)}_4)\bigg]
    \eqdef I_1+I_2+I_3,
\end{multline*}
where $k=\sqrt{\frac{|\bar{v}_1-\bar{v}_2|^2}{4}-\epsilon_0}$ and $\bar{v}_3$ and $\bar{v}_4$ are now defined as in \eqref{VelocInelCase}. 
Assume that $W_{non.el}$ also satisfies $W_{non.el}(a,b;c,d)=W_{non.el}(c,d;a,b).$ In general we write $$W_{non.el}\bigg(\bar{v}_1,\bar{v}_2;\frac{\bar{v}_1+\bar{v}_2}{2}+k\omega,\frac{\bar{v}_1+\bar{v}_2}{2}-k\omega\bigg)=W_{non.el}\bigg(|\bar{v}_1-\bar{v}_2|,(\bar{v}_1-\bar{v}_2)\cdot \omega\bigg).$$One example can be the one given in \eqref{non.el.cross.section}. 

For $I_1$ note that we can interchange the variables $\bar{v}_1 \leftrightarrows \bar{v}_2$ for the terms multiplied by $\varphi^{(1)}(\bar{v}_2)$ and see that
\begin{multline*}
    I_1=2\iint_{\mathbb{R}^{6}} d\bar{v}_1d\bar{v}_2 \frac{1}{4}\sqrt{\frac{|\bar{v}_1-\bar{v}_2|^2}{4}-\epsilon_0} \int_{\mathbb{S}^2}d\omega \  \\\times W_{non.el}\bigg(|\bar{v}_1-\bar{v}_2|,(\bar{v}_1-\bar{v}_2)\cdot \omega\bigg)
    \varphi^{(1)}(\bar{v}_1)(\bar{F}^{(2)}_3\bar{F}^{(1)}_4-\bar{F}^{(1)}_1\bar{F}^{(1)}_2).
\end{multline*}
Then by the weak formulation \eqref{weak.formulation} this corresponds to the term $2\mathcal{K}_{1,1}[F,F]$ in \eqref{eq.knonel1} and we obtain the explicit form \eqref{eq.knonel11} by the additional choice of the cross section that
\begin{equation}\label{non.el.cross.section}
   W_{non.el}(|\bar{v}_1-\bar{v}_2|,(\bar{v}_1-\bar{v}_2)\cdot \omega)\eqdef \frac{B_{non.el}(|\bar{v}_1-\bar{v}_2|,(\bar{v}_1-\bar{v}_2)\cdot \omega)}{\frac{1}{4}|\bar{v}_1-\bar{v}_2|}= \frac{C_0|\omega \cdot (\bar{v}_1-\bar{v}_2)|}{|\bar{v}_1-\bar{v}_2|},
\end{equation}which leads the operators to the ones for the well-known hard-sphere collisions.

For $I_2+I_3$, we recover the delta functions of energy and momentum conservation laws and write again as
\begin{multline*}
   I_2+I_3\eqdef \iiiint_{\mathbb{R}^{12}} d\bar{v}_1d\bar{v}_2d\bar{v}_3d\bar{v}_4\ \delta(\bar{v}_1+\bar{v}_2-\bar{v}_3-\bar{v}_4)\delta(|\bar{v}_3|^2+|\bar{v}_4|^2-|\bar{v}_1|^2-|\bar{v}_2|^2+2\epsilon_0)\\\times W_{non.el}(\bar{v}_1,\bar{v}_2;\bar{v}_3,\bar{v}_4)(\varphi^{(1)}(\bar{v}_4)+\varphi^{(2)}(\bar{v}_3))(\bar{F}^{(1)}_1\bar{F}^{(1)}_2-\bar{F}^{(2)}_3\bar{F}^{(1)}_4).
\end{multline*}
This time, we evaluate the delta function by choosing $\bar{v}_1=\bar{v}_3+\bar{v}_4-\bar{v}_2.$ Then by making further change of variables $\bar{v}_2 \mapsto (k',\omega')$ such that
$$\bar{v}_1=\frac{\bar{v}_3+\bar{v}_4}{2}+k'\omega',\ \bar{v}_2=\frac{\bar{v}_3+\bar{v}_4}{2}-k'\omega',$$ we obtain that the energy delta function is now equal to 
\begin{multline*}
    \delta\bigg(|\bar{v}_3|^2+|\bar{v}_4|^2-\bigg|\frac{\bar{v}_3+\bar{v}_4}{2}+k'\omega'\bigg|^2-\bigg|\frac{\bar{v}_3+\bar{v}_4}{2}-k'\omega'\bigg|^2+2\epsilon_0\bigg)\\
    =\delta\bigg(2\left(k'-\sqrt{\frac{|\bar{v}_3-\bar{v}_4|^2}{4}+\epsilon_0}\right)\left(k'+\sqrt{\frac{|\bar{v}_3-\bar{v}_4|^2}{4}+\epsilon_0}\right)\bigg).
\end{multline*}
Then similarly, we obtain that
\begin{multline*}
    I_2+I_3=\iint_{\mathbb{R}^{6}} d\bar{v}_3d\bar{v}_4 \frac{1}{4}\sqrt{\frac{|\bar{v}_3-\bar{v}_4|^2}{4}+\epsilon_0} \int_{\mathbb{S}^2}d\omega' \  \\\times W_{non.el}\bigg(\bar{v}_3,\bar{v}_4;\frac{\bar{v}_3+\bar{v}_4}{2}+k'\omega',\frac{\bar{v}_3+\bar{v}_4}{2}-k'\omega'\bigg) (\varphi^{(1)}(\bar{v}_4)+\varphi^{(2)}(\bar{v}_3))(\bar{F}^{(1)}_1\bar{F}^{(1)}_2-\bar{F}^{(2)}_3\bar{F}^{(1)}_4).
\end{multline*}
Using \eqref{non.el.cross.section}, we also write
$$W_{non.el}\bigg(\bar{v}_3,\bar{v}_4;\frac{\bar{v}_3+\bar{v}_4}{2}+k'\omega',\frac{\bar{v}_3+\bar{v}_4}{2}-k'\omega'\bigg)=\frac{C_0|\omega'\cdot (\bar{v}_3-\bar{v}_4)|}{|\bar{v}_3-\bar{v}_4|},$$ in the hard-sphere case.
Then we obtain that the term with $\varphi^{(1)}(\bar{v}_4)$ exactly corresponds to $\mathcal{K}^{(1)}_{1,2}$ in \eqref{eq.knonel12} by the weak formulation \eqref{weak.formulation}. Similarly, term with $\varphi^{(2)}(\bar{v}_3)$ corresponds to $\Kntw$ in \eqref{eq.knonel2}.
 This completes the derivation of the nonelastic collision operators.

\section{Detailed balance property for systems with radiation}
In this section, we discuss a general situation where we derive the detailed balance via the well-known Planck distribution at equilibrium. This has been already introduced in \cite[Section 1.4.4]{oxenius} and we summarize it here for the readers' convenience. We start with a more generalized radiation operator as follows (cf. \eqref{eq.radiation}):
\begin{equation}\label{eq.radiation.general}
	\frac{1}{c}\frac{\partial I_{\nu}}{\partial t} + n\cdot \nabla_x I_{\nu} =\frac{h\nu} {4\pi}\int_\rth dv\ \left[ A_{21}\Ftw (v) \left(1+\frac{B_{21}}{A_{21}}I_{\nu}\right)\eta_{21}(\xi,n)-B_{12}\Fon(v) I_{\nu}(n)\alpha_{12}(\xi)\right],
\end{equation}
where $\xi=\nu-\frac{\nu_0}{c}n\cdot v$ describes the Doppler effect and $\alpha$ and $\eta$ are the atomic absorption and emission profiles transformed from the lab frame to the rest frame of the atom moving with the velocity $v$.  In this section, we can assume $\alpha_{1,2}=\eta_{2,1}$ for the sake of simplicity.  
Differently from the construction in Section \ref{sec.eq.rad}, we will also take the number of photon states into account. First of all, we define the number density of photons $Q(\nu)$ as
$Q(\nu)\eqdef \frac{I_\nu(n)}{h\nu c}$ such that the number of photons with in the frequency range $[\nu,\nu+d\nu]$ is equal to $Q(\nu)d\nu$. We also define $G(\nu) d\nu$ as the number of photon states in the frequency range $[\nu,\nu+d\nu]$ is equal to $Q(\nu)d\nu$. Then we understand that the number of possible photon states is proportional to the number of changes in the molecule states. Similarly, we denote the corresdponding quantities for the ground-state molecule as $\bar{Q}$ and $\bar{G}$ in the reaction \eqref{eq.reactions2}. Then we denote the mean occupation number of a photon state of frequency $\nu$ as $z(\nu)=\frac{Q(\nu)}{G(\nu)}.$
Then the detailed balance of the reaction \eqref{eq.reactions2} can be obtained if
$$\bar{Q}(E/h)\bar{G}(E/h-\nu) G(\nu)[1+z(\nu)]=\bar{Q}(E/h-\nu)\bar{G}(E/h) N(\nu),$$ holds.
This holds when
$$\frac{\bar{Q}(E/h)}{\bar{G}(E/h)}[1+z(\nu)]=\frac{\bar{Q}(E/h-\nu)}{\bar{G}(E/h-\nu)}z(\nu),$$
and hence
$$\frac{1+z(\nu)}{z(\nu)}=\frac{\frac{\bar{Q}(E/h-\nu)}{\bar{G}(E/h-\nu)}}{\frac{\bar{Q}(E/h)}{\bar{G}(E/h)}}=e^{\beta h\nu}.$$ Thus the Planck distribution
$$Q(\nu)=\frac{G(\nu)}{e^{\beta h\nu}-1},$$ implies the detailed balance.

\section{Reduction of the integral operators for numerical simulations}\label{sec.appendix.reduction}This section is devoted to the reduction of the representations of the functions $S$ and $L$ for the numerical simulations in Section \ref{sec.numerics}.   To this end, we will consider the reduction of
 $$\frac{\mathcal{P}(T_2,T_1)-\mathcal{P}(T_1)}{T_2-T_1} \text { and } \frac{\mathcal{B}(T_1,T_2)}{T_2-T_1}.$$ 
 Using \eqref{new.P} we first have$$
     \mathcal{P}(T_2,T_1)=\mathcal{P}(T_1,T_2)
     = 4\pi \int_{\mathbb{R}^3}dv_3 \int_\rth dv_4  \sqrt{|v_3-v_4|^2+4\epsilon_0}\mathcal{Z}(v_3,0,T_1)\mathcal{Z}(v_4,0,T_2).
$$
 We make the change of variables $(v_3,v_4)\mapsto (w_3,w_4)\eqdef \left(\frac{v_3}{\sqrt{T_1}},\frac{v_4}{\sqrt{T_2}}\right)$ for $\mathcal{P}(T_2,T_1)$ and $(v_3,v_4)\mapsto (w_3,w_4)\eqdef \left(\frac{v_3}{\sqrt{T_1}},\frac{v_4}{\sqrt{T_1}}\right)$ for $\mathcal{P}(T_1)$. 
 Then we have
 \begin{multline}\label{P.diff}
   \frac{  \mathcal{P}(T_2,T_1)
    -     \mathcal{P}(T_1)}{T_2-T_1}
     = \frac{4\pi}{T_2-T_1} \int_{\mathbb{R}^3}dw_3 \int_\rth dw_4  \mathcal{Z}(w_3,0,1)\mathcal{Z}(w_4,0,1)\\\times \left(\sqrt{|\sqrt{T_1}w_3-\sqrt{T_2}w_4|^2+4\epsilon_0}-\sqrt{|\sqrt{T_1}w_3-\sqrt{T_1}w_4|^2+4\epsilon_0}\right).
 \end{multline}We then observe
 \begin{multline}\label{def.R1R2}
     \left(\sqrt{|\sqrt{T_1}w_3-\sqrt{T_2}w_4|^2+4\epsilon_0}-\sqrt{|\sqrt{T_1}w_3-\sqrt{T_1}w_4|^2+4\epsilon_0}\right)\\=\sqrt{T_1}\bigg(\sqrt{|w_4-w_3|^2+2\delta(w_4-w_3)\cdot w_4 +\delta^2|w_4|^2+\frac{4\epsilon_0}{T_1}}-\sqrt{|w_4-w_3|^2+\frac{4\epsilon_0}{T_1}}\bigg)\\
     \eqdef \sqrt{T_1}(R_1-R_2),
 \end{multline}where we define
 $$\delta=\sqrt{\frac{T_2}{T_1}}-1.$$ Then we further have$$
     \sqrt{T_1}(R_1-R_2)=\frac{\sqrt{T_1}}{R_1+R_2}(2\delta(w_4-w_3)\cdot w_4+\delta^2|w_4|^2).$$
 Note that
 $$\frac{\delta}{T_2-T_1}=\frac{1}{T_1(\sqrt{\frac{T_2}{T_1}}+1)},$$
 and $$\frac{\delta^2}{T_2-T_1}=\frac{\sqrt{\frac{T_2}{T_1}}-1}{T_1(\sqrt{\frac{T_2}{T_1}}+1)}.$$
 Now we consider the reduction of the dimensionality of the integral. We note that the term in \eqref{def.R1R2} only depend on 
 $|w_3-w_4|^2$, $(w_3-w_4)\cdot w_4$, and $|w_4|^2$. In addition we observe 
 $$e^{-|w_3|^2-|w_4|^2} = e^{-|w_3-w_4|^2-2(w_3-w_4)\cdot w_4-2|w_4|^2}.$$ Furthermore, we can also write $|w_3-w_4|^2$, $(w_3-w_4)\cdot w_4$ in terms of $|w_3-w_4|^2$, $(w_3-w_4)\cdot (w_4+w_3)$, and $|w_3+w_4|^2$ only.
 Therefore, removing the singularity of $\frac{1}{T_2-T_1}$ in \eqref{P.diff}, we can write
 \begin{multline*}\frac{\mathcal{P}(T_2,T_1)
    -     \mathcal{P}(T_1)}{T_2-T_1}\\
    =\iint dw_3dw_4\  F_\delta(|w_3-w_4|^2, (w_3-w_4)\cdot (w_4+w_3),|w_3+w_4|^2)e^{-\frac{1}{2}|w_3-w_4|^2-\frac{1}{2}|w_3+w_4|^2},\end{multline*} where $F_\delta$ is defined as
    \begin{multline*}
        F_\delta(a,b,c)\eqdef \frac{4\pi}{\sqrt{a+\delta(a-b)+\delta^2(\frac{a}{4}+\frac{c}{4}-\frac{b}{2})+\frac{4\epsilon_0}{T_1}}+\sqrt{a+\frac{4\epsilon_0}{T_1}}}\\\times \left(\frac{a-b}{\sqrt{T_1}+\sqrt{T_2}}+\frac{a-2b+c}{4(\sqrt{T_1}+\sqrt{T_2})}\left(\sqrt{\frac{T_2}{T_1}}-1\right)\right).
    \end{multline*}By making the change of variables $(w_3,w_4)\mapsto (\xi,\eta)\eqdef(w_3+w_4,w_3-w_4)$, we have $$\frac{\mathcal{P}(T_2,T_1)
    -     \mathcal{P}(T_1)}{T_2-T_1}
    =2^{-3}\iint_{\mathbb{R}^6} d\xi d\eta \  F_\delta(|\eta|^2, \eta\cdot \xi,|\xi|^2)e^{-\frac{1}{2}|\eta|^2-\frac{1}{2}|\xi|^2}.$$
   By writing $\eta $ in the spherical coordinates $(\rho, \phi, \theta) $
with $\cos\theta \eqdef \frac{\xi\cdot \eta}{|\xi||\eta|}$, we further have\begin{multline*}\frac{\mathcal{P}(T_2,T_1)
    -     \mathcal{P}(T_1)}{T_2-T_1}
    =2^{-3}\int_{\mathbb{R}^3} d\xi \int_0^\infty d\rho \int_0^{2\pi}d\phi\int_0^\pi d\theta\ \rho^2\sin\theta  F_\delta(\rho^2, \rho|\xi|\cos\theta,|\xi|^2)e^{-\frac{1}{2}\rho^2-\frac{1}{2}|\xi|^2}\\
 =\pi^2\int_0^\infty dr\int_0^\infty d\rho \int_0^\pi d\theta\ r^2\rho^2\sin\theta  F_\delta(\rho^2, \rho r\cos\theta,r^2)e^{-\frac{1}{2}\rho^2-\frac{1}{2}r^2}. \end{multline*}
Similarly, we have
\begin{multline*}
     \mathcal{P}(T_2,T_1)
     =  4\pi\int_{\mathbb{R}^3}dw_3 \int_\rth dw_4 \sqrt{T_1}R_1 \mathcal{Z}(w_3,0,1)\mathcal{Z}(w_4,0,1)\\
     =\pi^2\int_0^\infty dr\int_0^\infty d\rho \int_0^\pi d\theta\ r^2\rho^2\sin\theta  G_\delta(\rho^2, \rho r\cos\theta,r^2)e^{-\frac{1}{2}\rho^2-\frac{1}{2}r^2},
\end{multline*}and
\begin{multline*}
     \mathcal{P}(T_1)
     =  4\pi\int_{\mathbb{R}^3}dw_3 \int_\rth dw_4 \sqrt{T_1}R_2 \mathcal{Z}(w_3,0,1)\mathcal{Z}(w_4,0,1)\\
     =\pi^2\int_0^\infty dr\int_0^\infty d\rho \int_0^\pi d\theta\ r^2\rho^2\sin\theta  G_0(\rho^2, \rho r\cos\theta,r^2)e^{-\frac{1}{2}\rho^2-\frac{1}{2}r^2},
\end{multline*}
where $$G_\delta(a,b,c)\eqdef 4\pi \sqrt{a+\delta(a-b)+\delta^2(\frac{a}{4}+\frac{c}{4}-\frac{b}{2})+\frac{4\epsilon_0}{T_1}}.$$

Now we also reduce the integrals $\mathcal{A}(T_1;\epsilon_0)$ and $\frac{\mathcal{B}(T_1,T_2)}{T_2-T_1}$.
Recall that they are defined as$$
    \mathcal{A}(T_1;\epsilon_0)\eqdef \int_{\mathbb{R}^3}\left(\frac{|v_3|^2}{2}+\epsilon_0\right)dv_3\int_\rth dv_4 \int_\stw d\omega\ W_+ (v_3,v_4;v_1,v_2)\mathcal{Z}(v_3,0,T_1) \mathcal{Z}(v_4,0,T_1),
$$and
$$ \frac{\mathcal{B}(T_1,T_2)}{T_2-T_1}\eqdef \int_{\mathbb{R}^3}\frac{|v_3|^2}{2}dv_3 \int_\rth dv_4 \int_\stw d\omega\ W_+ (v_3,v_4;v_1,v_2)\mathcal{Z}(v_4,0,T_1) \frac{\mathcal{Z}(v_3,0,T_1)-\mathcal{Z}(v_3,0,T_2)}{T_2-T_1}.$$
Regarding $\mathcal{A}$, we make the change of variables $(v_3,v_4)\mapsto (w_3,w_4)\eqdef \left(\frac{v_3}{\sqrt{T_1}},\frac{v_4}{\sqrt{T_1}}\right)$. Then we note that $\mathcal{A}(T_1;\epsilon_0)$ is the same as $\mathcal{P}(T_1)$ except for the additional weight of $\left(\frac{|v_3|^2}{2}+\epsilon_0\right)$ and obtain
\begin{multline*}
    \mathcal{A}(T_1;\epsilon_0)=4\pi\int_{\mathbb{R}^3}dw_3 \int_\rth dw_4 \left(\frac{T_1|w_3|^2}{2}+\epsilon_0\right)\sqrt{T_1|w_3-w_4|^2+4\epsilon_0}  \mathcal{Z}(w_3,0,1)\mathcal{Z}(w_4,0,1)\\
     =\pi^2\int_0^\infty dr\int_0^\infty d\rho \int_0^\pi d\theta\ r^2\rho^2\sin\theta  A(\rho^2, \rho r\cos\theta,r^2)e^{-\frac{1}{2}\rho^2-\frac{1}{2}r^2},
\end{multline*}
where $A$ is defined as
$$A(a,b,c)\eqdef 4\pi\left(\frac{T_1}{8}(a+2b+c)+\epsilon_0\right) \sqrt{T_1a+4\epsilon_0}.$$
On the other hand, regarding $\mathcal{B}$, we note that 
\begin{multline}\notag    \frac{\mathcal{B}(T_1,T_2)}{T_2-T_1}
=\frac{2\pi}{T_2-T_1} \int_{\mathbb{R}^3}|w_3|^2dw_3 \int_\rth dw_4  \mathcal{Z}(w_4,0,1)\mathcal{Z}(w_3,0,1)\\
\times \left(T_1\sqrt{|\sqrt{T_1}w_3-\sqrt{T_1}w_4|^2+4\epsilon_0}-T_2\sqrt{|\sqrt{T_2}w_3-\sqrt{T_1}w_4|^2+4\epsilon_0}\right)
,
\end{multline}where we made the change of variables 
$(v_3,v_4)\mapsto (w_3,w_4)\eqdef \left(\frac{v_3}{\sqrt{T_1}},\frac{v_4}{\sqrt{T_1}}\right)$ for the first integral and $(v_3,v_4)\mapsto (w_3,w_4)\eqdef \left(\frac{v_3}{\sqrt{T_2}},\frac{v_4}{\sqrt{T_1}}\right)$ for the second integral. By relabelling variables $(w_3,w_4)\leftrightarrows (w_4,w_3), $ we have
\begin{multline}\notag    \frac{\mathcal{B}(T_1,T_2)}{T_2-T_1}
=\frac{2\pi}{T_2-T_1} \int_{\mathbb{R}^3}|w_4|^2dw_4 \int_\rth dw_3  \mathcal{Z}(w_3,0,1)\mathcal{Z}(w_4,0,1)\left(T_1\sqrt{T_1}R_2-T_2\sqrt{T_1}R_1\right)\\
=2\pi \int_{\mathbb{R}^3}|w_4|^2dw_4 \int_\rth dw_3  \mathcal{Z}(w_3,0,1)\mathcal{Z}(w_4,0,1) \left[-\sqrt{T_1}R_2-\frac{T_2}{T_2-T_1}\sqrt{T_1}(R_1-R_2)\right]
\eqdef -\mathcal{B}_1-\mathcal{B}_2
,
\end{multline}where $R_1$ and $R_2$ are defined in \eqref{def.R1R2}.
Now note that the difference between $\mathcal{P}(T_2,T_1)$ and $\mathcal{B}_1$ is on the additional weight of $|w_4|^2$ and the change of the kernel from $R_1$ to $R_2$ only. Therefore, we obtain
\begin{equation*}
  \mathcal{B}_1
     =\pi^2\int_0^\infty dr\int_0^\infty d\rho \int_0^\pi d\theta\ r^2\rho^2\sin\theta  B_1(\rho^2, \rho r\cos\theta,r^2)e^{-\frac{1}{2}\rho^2-\frac{1}{2}r^2},
\end{equation*}
where $$B_1(a,b,c)\eqdef 2\pi \left(\frac{a}{4}+\frac{c}{4}-\frac{b}{2}\right)\sqrt{a+\frac{4\epsilon_0}{T_1}}.$$ For $\mathcal{B}_2$, we also note that the difference between $\frac{\mathcal{P}(T_2,T_1)-\mathcal{P}(T_1)}{T_2-T_1}$ and $\mathcal{B}_2$ is on the additional weight of $\frac{T_2|w_4|^2}{2}$ only. Therefore, we have
$$
    \mathcal{B}_2=\pi^2\int_0^\infty dr\int_0^\infty d\rho \int_0^\pi d\theta\ r^2\rho^2\sin\theta  B_{2,\delta}(\rho^2, \rho r\cos\theta,r^2)e^{-\frac{1}{2}\rho^2-\frac{1}{2}r^2},$$
where the kernel $B_{2,\delta}$ is defined as
\begin{multline*}
    B_{2,\delta}\eqdef \frac{2\pi T_2(\frac{a}{4}+\frac{c}{4}-\frac{b}{2})}{\sqrt{a+\delta(a-b)+\delta^2(\frac{a}{4}+\frac{c}{4}-\frac{b}{2})+\frac{4\epsilon_0}{T_1}}+\sqrt{a+\frac{4\epsilon_0}{T_1}}} \left(\frac{a-b}{\sqrt{T_1}+\sqrt{T_2}}+\frac{a-2b+c}{4(\sqrt{T_1}+\sqrt{T_2})}\left(\sqrt{\frac{T_2}{T_1}}-1\right)\right). 
\end{multline*}
Thus, we have$$
    \frac{\mathcal{B}(T_1,T_2)}{T_2-T_1}=-\pi^2\int_0^\infty dr\int_0^\infty d\rho \int_0^\pi d\theta\ r^2\rho^2\sin\theta  (B_1+B_{2,\delta})(\rho^2, \rho r\cos\theta,r^2)e^{-\frac{1}{2}\rho^2-\frac{1}{2}r^2}.$$
 This completes the reduction of 6-fold integrals to 3-fold integrals.

\bibliographystyle{amsplain3links}
\bibliography{bibliography.bib}{}

 \end{document}